\documentclass[11pt]{article}

\usepackage{fullpage}
\usepackage{authblk}
\usepackage{natbib}
\usepackage{algorithm}
\usepackage{algpseudocode}
\usepackage{amsmath,amsthm,amsfonts}
\usepackage{amsmath}
\usepackage[subnum]{cases}
\usepackage[most]{tcolorbox}
\usepackage{mathrsfs}
\usepackage{amssymb}
\usepackage{color}
\usepackage{mathrsfs}
\usepackage{empheq}
\usepackage{enumitem}
\usepackage{bm}
\usepackage{multirow}
\usepackage{booktabs}
\usepackage{makecell}
\usepackage{graphicx}
\usepackage{subcaption}
\usepackage{comment}
\usepackage{cases}
\usepackage{appendix}
\usepackage{tikz}
\usetikzlibrary{arrows,shapes}
\usepackage[colorlinks= true, linkcolor=brown, citecolor=blue, urlcolor=black]{hyperref}
\usepackage{cleveref}
\usepackage{marginnote}
\usepackage{diagbox} 

\numberwithin{equation}{section}

\theoremstyle{definition}

\newtheorem{theorem}{Theorem}[section]
\newtheorem{lemma}[theorem]{Lemma}

\newtheorem{remark}[theorem]{Remark}

\newtheorem{condition}[theorem]{Condition}
\newtheorem{prop}[theorem]{Proposition}
\newtheorem{defn}[theorem]{Definition}

\newcommand{\defi}{\overset{\triangle}{=}}

\usepackage{booktabs}
\usepackage{pgfplots}
\usetikzlibrary{calc,intersections}
\usepackage{tikz}
\usetikzlibrary{patterns}

\usetikzlibrary{shapes.geometric, arrows, positioning}

\tikzset{
	mybox/.style  = {draw, rectangle, minimum width=2.0cm, minimum height=0.8cm, text centered, text width=2.0cm,   
		font=\normalsize,  align=left},
	box/.style  = {draw, rectangle, minimum width=2.0cm, minimum height=0.6cm, text centered, text width=7.4cm,   
		font=\normalsize,  align=left},		
	box1/.style  = {draw, rectangle, minimum width=2.0cm, minimum height=0.6cm, text centered, text width=5.6cm,   
		font=\normalsize},	
	box2/.style  = {draw, rectangle, minimum width=2.0cm, minimum height=0.6cm, text centered, text width=7.4cm,   
		font=\normalsize,  align=left},	
	myarrow/.style = {line width=0.2pt, draw=black, -triangle 60, postaction={draw, line width=0.2pt, shorten >=10pt,-}}	
}

\tikzstyle{arrow} = [->, >=stealth, -triangle 60]

\allowdisplaybreaks

\makeatletter
\newcommand{\leqnomode}{\tagsleft@true}
\newcommand{\reqnomode}{\tagsleft@false}
\makeatother

\begin{document}

\title{Numerical Construction of Quasi-Periodic Solutions Beyond Symplectic Integrators}
\author[1]{Mingwei Fu}
\author[2,3]{Bin Shi\thanks{Corresponding author: \url{binshi@fudan.edu.cn} } }
\affil[1]{School of Mathematical Sciences, University of Chinese Academy of Sciences, Beijing 100049, China}
\affil[2]{Center for Mathematics and Interdisciplinary Sciences, Fudan University, Shanghai 200433, China}
\affil[3]{Shanghai Institute for Mathematics and Interdisciplinary Sciences, Shanghai 200433, China}

\date\today

\maketitle

\begin{abstract}

Symplectic integrators are the established standard for long-term simulations of nearly-integrable Hamiltonian systems due to their preservation of geometric structures. However, they suffer from an inherent limitation: secular phase-shift errors. While the qualitative ``shape'' of invariant tori is preserved, the numerical solution gradually drifts along the torus, leading to a phase-lag accumulation that degrades long-term positional accuracy. Inspired by the Craig-Wayne-Bourgain (CWB) scheme, originally developed as an analytical tool for infinite-dimensional systems, we introduce a numerical operator that incorporates frequency updates into a dimension-enlarged Newton iteration to compute quasi-periodic solutions. Unlike conventional time-stepping integrators, our alternating numerical procedure eliminates phase-lag accumulation by directly solving for instantaneous positions and phase angles. Theoretically, provided sufficient computational resources, the phase error can be reduced arbitrarily, remaining independent of the total integration time. Our algorithm translates the Nash-Moser iteration into a practical numerical framework, marking a significant departure from traditional Kolmogorov-Arnold-Moser (KAM) theory. While KAM provides rigorous existence proofs, its requirement for global Diophantine conditions and the total exclusion of resonant sets render it numerically inaccessible. By employing a ``step-by-step'' exclusion process and incrementally enlarging the dimension, our algorithm resolves irrationality conditions locally. This approach demonstrates that the ``numerical irrationality problem'' is not an intrinsic barrier to computation, offering a constructive, executable alternative to the non-executable nature of global KAM-based methods.

\end{abstract}

\section{Introduction}
\label{sec: intro}

The Hamiltonian formulation provides the modern geometric framework for classical mechanics, describing the state of a conservative system by canonical coordinates $\pmb{x} = (x_1, \ldots, x_n)$ and $\pmb{y} = (y_1, \ldots, y_n)$ defined in a $2n$-dimensional phase space. The time evolution is governed by Hamilton's canonical equations
\begin{subequations}
\label{eqn: hamilton-eqn}
\begin{empheq}[left=\empheqlbrace]{align} 
         &  \dot{\pmb{x}} =  -  \frac{\partial H}{\partial \pmb{y}},                                                  \label{eqn: h-q}            \\   
         &  \dot{\pmb{y}} =     \frac{\partial H}{\partial \pmb{x}},                                                   \label{eqn:h-p}            
\end{empheq}    
\end{subequations}
where the Hamiltonian function $H = H(\pmb{x} ; \pmb{y} ) = H(x_1, \ldots, x_n; y_1, \ldots, y_n)$ typically represents the total energy of the system and acts as the generator of the dynamics. Beyond a reformulation of Newton’s laws, the Hamiltonian framework is inherently geometric. The phase space is naturally endowed with a canonical symplectic structure $\omega = d\pmb{x} \wedge d\pmb{y} = \sum_{j = 1}^{n} dx_j \wedge dy_j$ and the resulting flow $\varphi_t: (\pmb{x}(0), \pmb{y}(0)) \mapsto  (\pmb{x}(t), \pmb{y}(t))$ is a symplectomorphism that preserves the symplectic two-form exactly for all times~\citep{arnol2013mathematical}. This geometric structure underpins the characteristic long-time behavior of conservative systems, including phase‑space volume preservation and the existence of invariant tori in integrable systems.

For long-time numerical simulations, it is therefore of fundamental importance to preserve this symplectic structure under time discretization.  Symplectic integrators are numerical schemes specifically designed for this purpose: their one-step update maps are symplectic transformations of phase space, ensuring that the discrete flow inherits the essential geometric structure of the continuous Hamiltonian system. The formal development of symplectic integrators began with~\citet{feng1986difference}, who provided the first rigorous proof that such schemes can be systematically constructed as symplectic maps. From an algorithmic viewpoint, early developments in structure-preserving discretizations can be traced back to~\citet{devogelaere1956methods}, with practical higher-order methods later introduced by~\citet{ruth1983canonical} and~\citet{feng1985difference}.   By construction, symplectic integrators exhibit remarkable long-time stability and qualitative accuracy, most notably the absence of secular energy drift and the faithful reproduction of phase-space topology. The theoretical foundation is further clarified by backward error analysis, which interprets the numerical solution as the exact flow of a nearby modified Hamiltonian system. The systematic study of backward error analysis for structure-preserving methods was initiated by~\citet{feng1991formal}. Sharp characterizations of truncation errors were subsequently developed by~\citet{yoshida1993recent}, while long-time backward error analysis and lifespan estimates were established by~\citet{hairer1997life}; see also the comprehensive review by~\citet{hairer2003geometric}. More recently,~\citet{jordan2018dynamical} advocated symplectic integrators as a geometric framework for analyzing gradient-based optimization algorithms.

This geometric philosophy extends naturally to other structure-preserving problems.  For general divergence-free vector fields, such as those arising in incompressible fluid dynamics, the corresponding goal is to construct volume-preserving integrators. Building upon insights from the classical ABC flow,~\citet{feng1995volume} established a general framework for constructing such schemes, accompanied by rigorous proofs of convergence and structure-preservation.    Moreover,~\citet{hairer2000long} introduced a class of highly oscillatory systems motivated by variants of the Fermi-Pasta-Ulam (FPU) model, and investigated their long-time numerical energy behavior. Although such modified systems do not coincide exactly with the original physical models, they capture essential stiffness and multiscale features relevant to numerical analysis. The stiffness properties of ordinary differential equations were analyzed in detail by~\citet{hairer1996solving2}. Furthermore,~\citet{cohen2003modulated}  introduced the framework of modulated Fourier expansions, demonstrating that the harmonic energy associated with the highly oscillatory components is nearly conserved over time intervals that are exponentially long with respect to the dominant frequency.

A class of systems of paramount importance is that of nearly integrable Hamiltonian systems. In action–angle variables $(\pmb{I}, \pmb{\theta}) = (I_1, \ldots, I_n; \theta_1, \ldots, \theta_n) \in \mathbb{R}_+^n \times \mathbb{T}^n$, the Hamiltonian takes the form
\begin{equation}
\label{eqn: nearly-integrable-func}
H = H(\pmb{I} ; \pmb{\theta} ) = H_0(\pmb{I})+ \varepsilon H_1(\pmb{I}, \pmb{\theta}), \qquad 0 < \varepsilon \ll 1,
\end{equation}
where $I_{j} = (x_j^2 + y_j^2)/2$ and $ \theta_{j} = \arctan(y_j/x_j)$. The unperturbed Hamiltonian $H_0(\pmb{I})$ describes an integrable system with frequencies $\pmb{\omega}(\pmb{I}) := \nabla_{\pmb{I}}H_0(\pmb{I})$.  One of the most profound achievements in twentieth‑century dynamical systems theory is the Kolmogorov-Arnold-Moser (KAM) theorem~\citep{arnold1963small, moser1962invariant}, which asserts that, under suitable nondegeneracy and smoothness conditions, a large measure of invariant tori for the unperturbed system persists under small Hamiltonian perturbations, albeit slightly deformed. These surviving KAM tori confine the motion and provide a profound mechanism for long‑time stability, thereby explaining the absence of global chaos in weakly perturbed Hamiltonian systems. From a numerical perspective, the persistence of KAM tori can also be observed for symplectic integrators. Under appropriate step-size conditions,~\citet{shang1999kam, shang2000resonant} established rigorous KAM-type theorems for symplectic discretizations.  Nevertheless, as emphasized by~\citet{gauckler2018dynamics}, the design of numerical algorithms with favorable long-time dynamical properties remains a central challenge.

\subsection{Intrinsic limitation of symplectic integrators:  secular drift of angle variables}
\label{subsec: limit-symplectic}

We here consider the simple one-dimensional harmonic oscillator: $ \ddot{x} =  - x$. Introducing the auxiliary variable $y = - \dot{x}$, the system can be written in Hamiltonian form as
\begin{subequations}
\label{eqn: ham-os}
\begin{empheq}[left=\empheqlbrace]{align} 
         &  \dot{x} =  - y,                                                   \label{eqn: ham-os-x}            \\   
         &  \dot{y} =  x.                                                     \label{eqn: ham-os-y}            
\end{empheq}    
\end{subequations}
Equivalently, the dynamics are generated by the Hamiltonian $H = (x^2 + y^2) / 2$, equipped with the canonical symplectic two-form $\omega = dx \wedge dy$. The contours of the Hamiltonian are circles in phase space, corresponding to periodic orbits with constant energy. Starting from the initial condition $x(0) = 1$ and $ y(0) = \dot{x}(0) = 0$, the exact solution traces a circular trajectory  in the $(x,y)$-plane, rotating counterclockwise with constant angular frequency $\omega = 1$. Equivalently, the phase angle evolves as $\theta(t) = t$; see~\Cref{fig: symp-orbit}. 
\begin{figure}[htb!]
    \centering
    \includegraphics[scale=0.25]{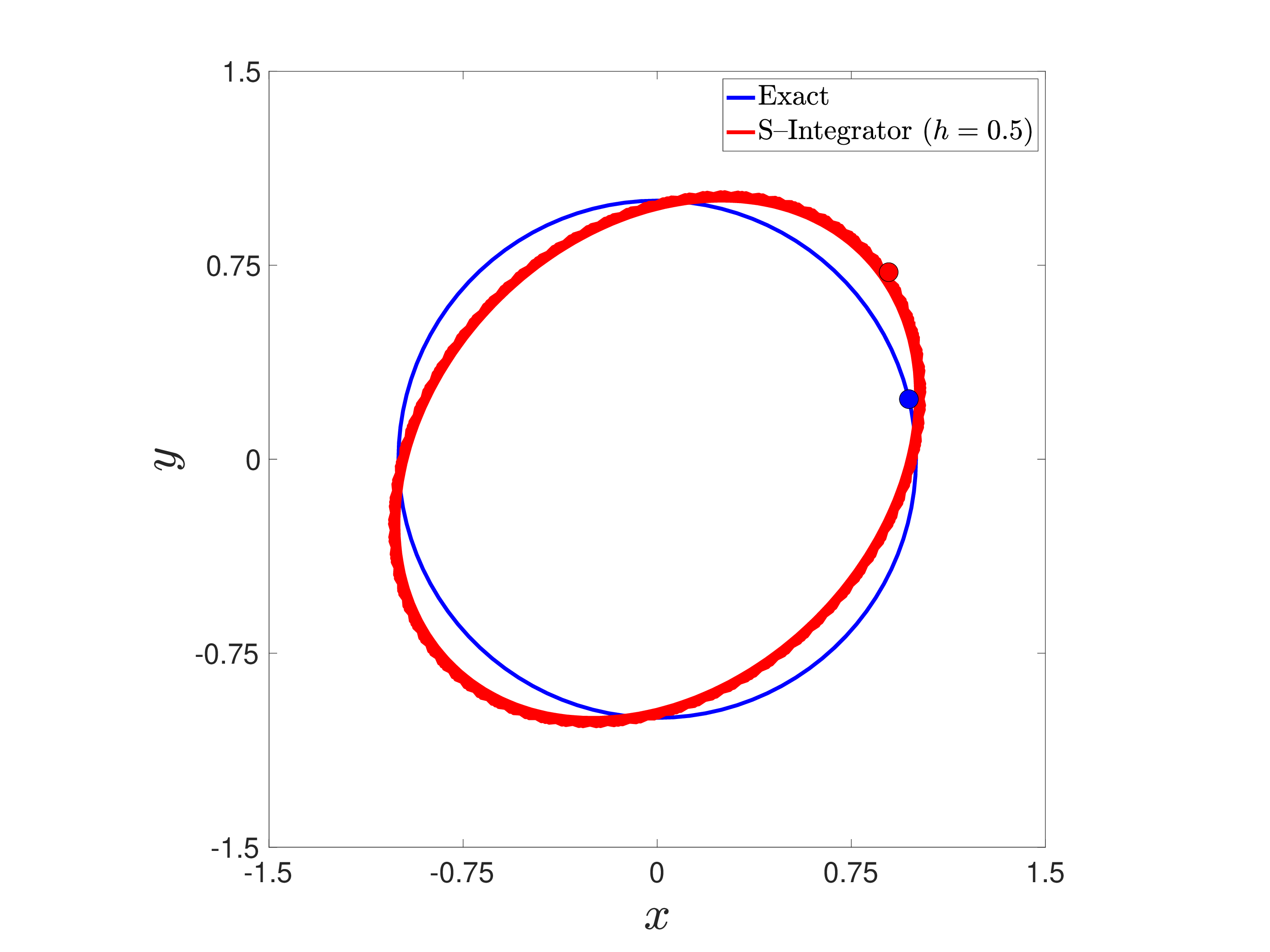}
    \caption{Phase space trajectories for both the exact solution of the Hamiltonian system~\eqref{eqn: ham-os} and the first-order symplectic integrator~\eqref{eqn: ham-os-sym}, starting from the initial $(x(0),y(0)) = (1,0)$; markers indicate the states at $t_n = nh$ with $n=101$.}
    \label{fig: symp-orbit}
\end{figure}
Given a step size $h>0$, we apply the simple first-order symplectic (forward-backward) integrator to the Hamilton system~\eqref{eqn: ham-os}:
\begin{subequations}
\label{eqn: ham-os-sym}
\begin{empheq}[left=\empheqlbrace]{align} 
         &  x_{n+1} - x_{n} =  - hy_{n},                                                   \label{eqn: ham-os-x-sym}            \\   
         &  y_{n+1} - y_{n} =    hx_{n+1},                                               \label{eqn: ham-os-y-sym}            
\end{empheq}    
\end{subequations}
which can be written in matrix form as
\[
    \begin{pmatrix} 
        x_{n+1} \\ 
        y_{n+1} 
    \end{pmatrix} 
    = 
    \begin{pmatrix} 
        1 & -h \\
        h & 1 - h^2 
    \end{pmatrix} 
    \begin{pmatrix} 
        x_n \\ 
        y_n 
    \end{pmatrix} 
    \defi M_h 
    \begin{pmatrix} 
        x_n \\ 
        y_n 
    \end{pmatrix}.
\]
The eigenvalues of the symplectic map $M_h$ are given by
\begin{equation}
    \label{eqn: eig-sym-integrator}
    \lambda_\pm = 1 - \frac{h^2}{2} \pm ih\sqrt{1 - \frac{h^2}{4}}.
\end{equation}
For any $ h \in [0, 2) $, these eigenvalues form a complex conjugate pair with unit modulus, $ |\lambda_\pm| = 1$,  implying that $M_h$ is area-preserving; see~\Cref{fig: symp-orbit}.


As illustrated in~\Cref{fig: symp-orbit}, although both the exact solution and the symplectic integrator generate closed circular orbits that remain close in phase space, their positions along the orbits, equivalently, their phase angles, do not coincide. To quantify the phase mismatch, we compute the numerical rotation angle per step, denoted by $\theta_h$. Since the eigenvalues of $M_h$ have unit modulus, there exists an invertible matrix $S_h$ such that
\begin{equation}
\label{eqn: m-h-new}
M_h = S_h^{-1}\begin{pmatrix}
           \cos \theta_h     &     -   \sin \theta_h   \\
           \sin \theta_h      &         \cos \theta_h
           \end{pmatrix} S_h.
\end{equation} 
Combining~\eqref{eqn: eig-sym-integrator} and~\eqref{eqn: m-h-new}, it is straightforward to verify that
\begin{equation}
\label{eqn: angle-step-size}
 \cos \theta_h =  1 - \frac{h^2}{2}, \qquad \sin \theta_h = h\sqrt{1 - \frac{h^2}{4}}.
\end{equation}
Since the exact solution advances the phase by $h$ per time step, we define the numerical phase error as $ \Delta \theta_h  := \theta_h - h$. For any $h \in [0, 2)$, a direct estimate from~\eqref{eqn: angle-step-size} yields:
\[
    \Delta \theta_h  = \theta_h - h = \arccos\left(1 - \frac{h^2}{2}\right) - h \geq \frac{1}{24}h^3.
\]
Consequently, after $n$ time steps, the accumulated phase error satisfies
\[
    n \Delta \theta_h \geq \frac{1}{24} n h^3 = \frac{1}{24} t_n h^2,
\]
where $t_n=nh$ denotes the physical time. This results in a secular drift in the numerical phase, as illustrated in~\Cref{fig: phase-difference}.

\begin{figure}[htb!]
\centering
\begin{subfigure}[t]{0.46\linewidth}
\centering
 \includegraphics[scale=0.16]{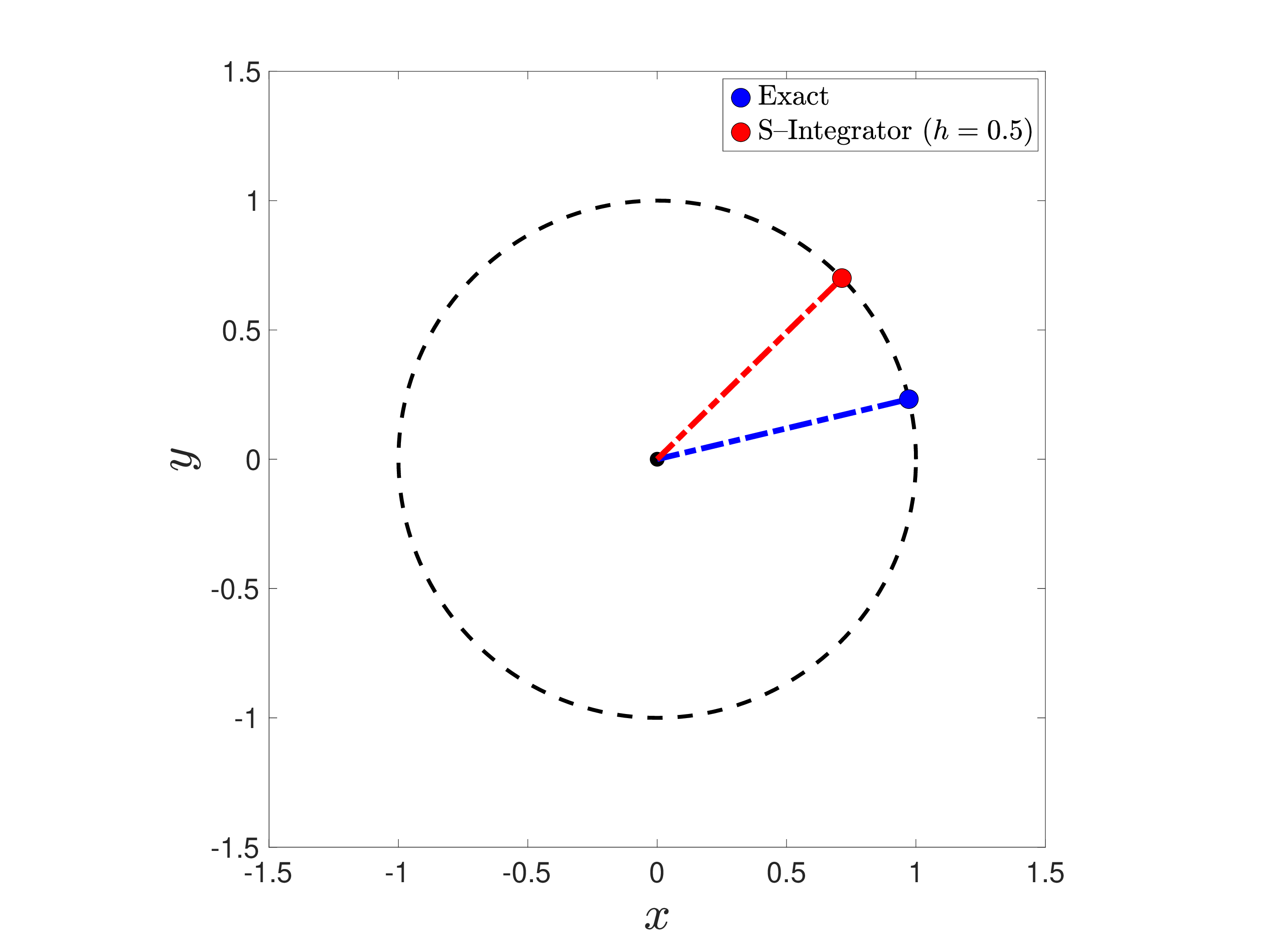}
\caption{Phase mismatch $n\Delta \theta_h$ at $n = 101$}
\end{subfigure}
\begin{subfigure}[t]{0.46\linewidth}
\centering
\begin{tikzpicture}[scale=1.8, >=stealth, thick]
    \coordinate (O) at (0,0);
    \def\R{3.5} 
    \def\angReal{15} 
    \def\angNum{30}  

    \draw[->] (-0.5,0) -- (\R+0.5, 0) node[right] {}; 
    \draw[->] (0,-0.5) -- (0, \R-1.0) node[above] {}; 
    \node at (O) [below left] {$0$};
    \node at (3.6,0) [below right] {$x$};
    \node at (0,2.0) [above left] {$y$};

    \draw[->, blue, line width=1.2pt] (O) -- (\angReal:\R+0.6) node[left, black] {Exact $\mathrel{\phantom{aaa}}$};
    \draw[->, red, line width=1.2pt] (O) -- (\angNum:\R+1.1) node[left, black] {S-Integrator $\mathrel{\phantom{a}}$};

    \draw[->, thin] (0.8,0) arc (0:\angReal:0.8);
    \node at (\angReal/2:1.0) {$h$};

    \draw[->, thin] (2.5,0) arc (0:\angNum:2.5);
    \node at (\angNum-40/2:2.7) {$\theta_h$};

    \draw[<->, thin] (\angReal:1.3) arc (\angReal:\angNum:1.3);
    \node at ({(\angReal+\angNum)/2}:1.6) {$\Delta\theta_h$};


\end{tikzpicture}
\caption{Schematic illustration: $\Delta \theta_h$}
\end{subfigure}
\caption{Accumulated phase (angle) error $n\Delta\theta_h$ at time $t_n = nh$ between the exact solution of the Hamiltonian system~\eqref{eqn: ham-os} and the first-order symplectic integrator~\eqref{eqn: ham-os-sym}.}
\label{fig: phase-difference}
\end{figure}

\Cref{fig: symp-orbit} and~\Cref{fig: phase-difference} demonstrate that, although symplectic integrators preserve the global geometric and topological structure of phase space, they do not, in general, guarantee uniform accuracy of the angle variables. Over long times, small local phase errors accumulate linearly in time, leading to an inevitable drift in the phase.  In the long-time regime, while the numerical and exact trajectories remain close in shape, the precise position along the orbit becomes increasingly unpredictable, reflecting an intrinsic loss of phase fidelity.

\section{A dimension-enlarged Newton scheme}
\label{sec: newton-scheme}

The Craig-Wayne-Bourgain (CWB) scheme, originally developed by~\citet{craig1993newton, bourgain1994construction, bourgain1998quasi}, is a fundamentally analytical framework for establishing the existence of quasi-periodic solutions. While extraordinarily powerful from a theoretical perspective, it has so far remained largely confined to existence proofs, and its potential as a practical numerical method has not been fully realized. At the core of the CWB approach lies a dimension-enlarged Newton (or Nash-Moser) iteration, which naturally suggests a viable computational strategy. In this work, we bridge this gap by transforming the analytic CWB framework into a concrete and implementable numerical algorithm. The proposed dimension-enlarged Newton scheme computes quasi-periodic solutions while preserving the underlying geometric structure of the system. Crucially, it avoids the secular phase-error drift inherent in standard symplectic integrators. In addition, the method is equipped with a rigorous a priori error bound, providing a quantitative certificate of accuracy and ensuring the reliability of the computed solutions.

\subsection{Lattice representation}
\label{subsec: lattice}

To begin our analysis, we introduce multi-index notation. for any $\pmb{\alpha} = (\alpha_1, \ldots, \alpha_n) \in \mathbb{N}^n$ and $\pmb{z} = (z_1, \ldots, z_n) \in \mathbb{C}^n$,  we define the length $|\pmb{\alpha}| := \sum_{i=1}^n |\alpha_i|$ and the associated monomial $\pmb{z}^{\pmb{\alpha}} := \prod_{j=1}^{n}z_j^{\alpha_j} $. Consider the nearly integrable Hamiltonian system~\eqref{eqn: nearly-integrable-func} in action-angle variables $(\pmb{I}, \pmb{\theta})$. We apply the complex canonical transformation $z_j = \sqrt{I_j} e^{i\theta_j} $ (and its conjugate $\overline{z}_j = \sqrt{I_j} e^{-i\theta_j}$), under which  the symplectic form becomes $d\pmb{z} \wedge d \overline{\pmb{z}} = -  i d \pmb{I} \wedge d\pmb{\theta}$. In these coordinates, the Hamiltonian~\eqref{eqn: nearly-integrable-func} is expressed as:
\begin{equation}
    \label{eqn: nearly-integrable-ham-complex}
    H(\pmb{z}, \overline{\pmb{z}}) = H_0\left( |z_1|^2, \ldots, |z_n|^2 \right) + \varepsilon H_1(\pmb{z}, \overline{\pmb{z}}),
\end{equation}
where the unperturbed part $H_0$ depends only on the actions $I_j = |z_j|^2 $, and the perturbation $H_1$ is a real-valued polynomial given by
\begin{equation}
    \label{eqn: H1}
    H_1(\pmb{z}, \overline{\pmb{z}}) = \sum_{\pmb{\alpha},  \pmb{\beta}} h_{\pmb{\alpha} \pmb{\beta}}  \left( \pmb{z}^{\pmb{\alpha}} \overline{\pmb{z}}^{\pmb{\beta}}  + \pmb{z}^{\pmb{\beta}} \pmb{\overline{z}}^{\pmb{\alpha}}  \right), 
\end{equation}
where the coefficients $h_{\pmb{\alpha} \pmb{\beta}} $ are real, and the total degree satisfies $ \max\{ |\pmb{\alpha}| + |\pmb{\beta}|\} \geq 3$. The resulting Hamiltonian equations of motion are
\begin{equation}
\label{eqn: nearly-complex}
\dot{\pmb{z}} =  i \pmb{\omega} \odot \pmb{z} +   i\varepsilon \frac{\partial H_1}{\partial \overline{\pmb{z}}},
\end{equation}
where $\odot $ denotes componentwise multiplication.
Before proceeding, we recall the definition of a time-quasi-periodic function to set the stage for excluding resonant interactions.
\begin{defn}[Quasi-Periodic Function]
\label{defn: quasi-periodic}
The frequency $\pmb{\omega} \in \mathbb{R}^n$ is said to be \textit{rationally independent} if it satisfies the non-resonance condition $\langle \pmb{k}, \pmb{\omega} \rangle \neq 0$ for any  $\pmb{k} \in \mathbb{Z}^n \setminus \{0\}$.  A function $f = f(t)$ is called \emph{time quasi-periodic} if there exists a periodic function $g = g(\pmb{\theta})$ on the $n$-dimensional torus $\mathbb{T}^{n}$ such that $f(t) = g(\pmb{\omega} t)$. 
\end{defn}

For a vector-valued quasi-periodic function $\pmb{z}(\pmb{\omega}t) = \left( z_1(\pmb{\omega}t), \ldots, z_n(\pmb{\omega}t) \right)^{\top}$, we consider its Fourier decomposition: 
\begin{equation}
    \label{eqn: unpert-sol-1}
    \pmb{z}(\pmb{\omega}t) = \sum_{\pmb{k} \in \mathbb{Z}^n} \widehat{\pmb{z}}(\pmb{k}) e^{i \langle \pmb{k}, \;\pmb{\omega} \rangle t}, 
\end{equation}
where the coefficients $\{\widehat{\pmb{z}}(\pmb{k})\}_{\pmb{k} \in \mathbb{Z}^n }$ are indexed by the integer lattice $\mathbb{Z}^n$.  In the unperturbated case ($\varepsilon = 0$), the  system~\eqref{eqn: nearly-complex} is linear and completely integrable, admitting the explicit solution
\begin{equation}
    \label{eqn: unpert-sol}
    z_{j}(\pmb{\omega}t)= a_j e^{i \omega_j t}, \quad j = 1, \dots, n,
\end{equation}
Given that the symplectic structure is invariant under the scaling $z_j \mapsto ea_jz_j$, we may assume without loss of generality that the amplitudes $a_j = e^{-1}$ for $j=1, \ldots, n$. The linear solution~\eqref{eqn: unpert-sol} corresponds to the Fourier representation~\eqref{eqn: unpert-sol-1} with coefficients localized on the standard basis vectors $ \pmb{e}_j \in \mathbb{Z}^n$ for $j=1,\ldots,n$: 
\begin{equation}
\label{eqn: init}
\widehat{z}_j(\pmb{k}) = \left\{ \begin{aligned}
                                                & a_j,        && \pmb{k} = \pmb{e}_j, \\
                                                & 0,           && \pmb{k} \neq \pmb{e}_ j.
                                                \end{aligned} \right.  
\end{equation}                                                
The solution to the nearly integrable Hamilton system~\eqref{eqn: nearly-complex} is treated as a perturbation of the linear solution~\eqref{eqn: init}, which admits the Fourier representation:
\begin{equation}
\label{eqn: pert-sol}  
    \pmb{z}(\pmb{\omega}'t) = \sum_{\pmb{k} \in \mathbb{Z}^n} \widehat{\pmb{z}}(\pmb{k}) e^{i \langle \pmb{k}, \;\pmb{\omega}' \rangle t}.
\end{equation}
where $\pmb{\omega}'$ represents a drifted frequency.   This perturbed solution is characterized by the following properties:   
\begin{itemize}    
\item \textbf{Frequency drift}: The drifted frequency  $\pmb{\omega}'$ remains close to the original frequency $ \pmb{\omega} $.

\item \textbf{Fixed dominant modes}: The  dominant Fourier coefficients remain fixed, with $\widehat{z}_j(\pmb{e}_j) = a_j$ for $j =1 , \ldots, n$.

\item \textbf{Perturbation-induced coefficients}: All remaining coefficients $\widehat{z}_j(\pmb{k}) \neq 0$ for $\pmb{k} \neq \pmb{e}_j $ are generated by the perturbation. \end{itemize}    
These resulting ``excited'' Fourier modes are expected to exhibit suitable decay properties, which will be established in the subsequent analysis. Our goal is to compute, numerically, a time quasi-periodic solution~\eqref{eqn: pert-sol} to the perturbed system, starting  from the linear solution~\eqref{eqn: unpert-sol}. The overall numerical procedure, mapping the initial Fourier coefficients and frequency to their perturbed counterparts, is illustrated schematically in~\Cref{fig: chart}. 
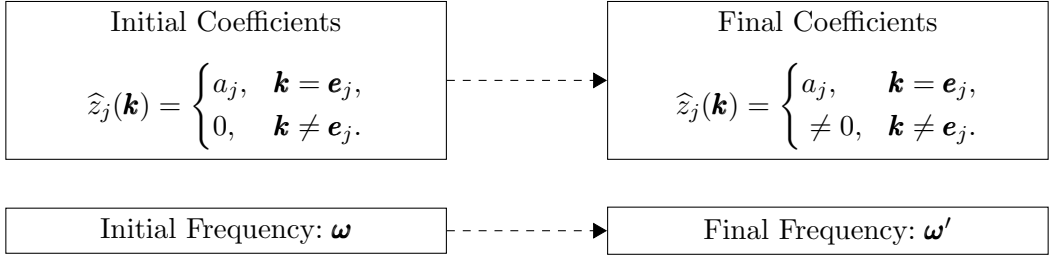
\begin{figure}[htb!]
\begin{center}
\scalebox{1.0}{
\begin{tikzpicture}[node distance=4cm]
    \node (n00) [box1, draw=black] {Initial Coefficients \[ \widehat{z}_j(\pmb{k}) = \left\{ \begin{aligned}
                                                & a_j,        && \pmb{k} = \pmb{e}_j, \\
                                                &  0,           && \pmb{k} \neq \pmb{e}_ j.
                                                \end{aligned} \right.  \]};
    \node (n01) [box1, draw=black,  below of=n00, yshift=+2cm] {Initial Frequency:~$\pmb{\omega}$};
    \node (n10) [box1, draw=black,  right of=n00,   xshift=+4cm] {Final Coefficients\[ \widehat{z}_j(\pmb{k}) = \left\{ \begin{aligned}
                                                & a_j,        && \pmb{k} = \pmb{e}_j, \\
                                                & \neq 0,           && \pmb{k} \neq \pmb{e}_ j.
                                                \end{aligned} \right.  \]};
    \node (n11) [box1, draw=black, below of=n10, yshift=+2cm] {Final Frequency:~$\pmb{\omega}'$};

    \draw [arrow] [dashed] (n00) --  (n10);    
    \draw [arrow] [dashed] (n01) --  (n11);    
\end{tikzpicture}
}
\end{center}
\caption{Schematic diagram of the numerical procedure.} 
\label{fig: chart}
\end{figure}

\subsection{Lyapunov-Schmidt decomposition}
\label{subsec: lyapunov-schmidt}

To facilitate the analysis, we denote the perturbation vector field by $\pmb{X} = \partial H_1 / \partial \pmb{\overline{z}}$, with its Fourier representation written as $\widehat{\pmb{X}} =\{ \widehat{\pmb{X}}(\pmb{k})\}_{\pmb{k} \in \mathbb{Z}^n}$. Substituting the formal time quasi-periodic solution~\eqref{eqn: pert-sol} into the complex Hamilton system~\eqref{eqn: nearly-complex}, we derive the following algebraic system over the lattice $\mathbb{Z}^n$:
\begin{equation}
\label{eqn: lattice}
-\langle \pmb{k}, \pmb{\omega}' \rangle \widehat{\pmb{z}} + \pmb{\omega} \odot \widehat{\pmb{z}} + \varepsilon \widehat{\pmb{X}}\big(\widehat{\pmb{z}}, \widehat{\overline{\pmb{z}}}  \big) = 0.
\end{equation}
where the Fourier coefficients satisfy the symmetry condition $ \widehat{\overline{\pmb{z}}} (\pmb{k}) = \overline{\widehat{\pmb{z}}(-\pmb{k})}$.  A key feature of the Fourier representation in~\eqref{eqn: lattice} is that the initial state~\eqref{eqn: init} is real.  Given that the perturbation Hamiltonian $H_1$ is a multivariate polynomial with real coefficients, both the full vector field  and its associated tangent linear operator are inherently real. The Newton scheme described in~\Cref{subsec: newton} preserves this reality,  ensuring that all subsequent iterates remain real. Consequently, we may assume without loss of generality that the Fourier vector $\widehat{\pmb{z}}$ is real.  Under this realness assumption, the symmetry condition simplifies to $ \widehat{\pmb{z}} (\pmb{k}) = \widehat{\pmb{z}}(-\pmb{k})$, which implies that the perturbation vector field $\widehat{\pmb{X}}$ depends only on $\widehat{\pmb{z}}$, rather than depending on $\widehat{\pmb{z}}$ and $\widehat{\overline{\pmb{z}}}$ independently. Therefore, the algebraic system~\eqref{eqn: lattice} can be rewritten as: 
\begin{equation}
\label{eqn: lattice-real}
-\langle \pmb{k}, \pmb{\omega}' \rangle \widehat{\pmb{z}} + \pmb{\omega} \odot \widehat{\pmb{z}} + \varepsilon \widehat{\pmb{X}}(\widehat{\pmb{z}}  ) = 0.
\end{equation}
Notice that when the perturbed frequency vector $\pmb{\omega}'$ coincides with the unperturbed vector $\pmb{\omega}$, then the coefficient of $\widehat{z}_{j}(\pmb{k})$ in~\eqref{eqn: lattice-real} vanishes for the mode $\pmb{k} = \pmb{e}_j$. This  degeneracy motivates the definition of the resonant set:
\begin{equation}
\label{eqn: resonant-set}
\mathcal{S}= \left\{ \left( j, \pmb{e}_j \right) | j = 1, \dots, n \right\}.
\end{equation} 
To handle the resulting singularities, we employ the Lyapunov-Schmidt decomposition, partitioning the algebraic system~\eqref{eqn: lattice-real} into two subsystems  according to whether the index pair $(j, \pmb{k})$ belongs to the resonant set $\mathcal{S}$ or not. 
\begin{tcolorbox}[breakable]
\begin{itemize}
\item \textbf{$Q$-equations} (\textbf{Resonant Modes}):   For the resonant indices $ (j, \pmb{k}) \in \mathcal{S}$, specifically for $j = 1, \dots, n$ and $\pmb{k} = \pmb{e}_j$,  the algebraic system~\eqref{eqn: lattice-real} reduces to
\begin{equation}
    \label{eqn: q-eqn}
    \left( -\omega'_j + \omega_j \right) a_j +  \varepsilon \widehat{X}_j(\pmb{e}_j) = 0, 
\end{equation}
which are referred to as the~\textit{$Q$-equations}. For convenience, we collect the resonant Fourier coefficients into the \textit{resonant vector} $\widehat{z}_q := \pmb{a} = (a_1, \ldots, a_n)^{\top}$, and define the \textit{resonant vector field} $\widehat{\pmb{X}}_q: = \big(\widehat{X}_1(\pmb{e}_1), \ldots, \widehat{X}_n(\pmb{e}_n) \big)^{\top}$. Consequently, the $Q$-equations~\eqref{eqn: q-eqn} can be written in the compact vector form as
\begin{equation}
    \label{eqn: q-eqn-vector}
\left( - \pmb{\omega}'+ \pmb{\omega} \right) \odot \widehat{\pmb{z}}_q + \widehat{\pmb{X}}_q  = 0.
\end{equation}

\item \textbf{$P$-equations} (\textbf{Non-Resonant Modes}):  For non-resonant indices $ (j, \pmb{k}) \notin \mathcal{S}$, the corresponding  components are collected into \textit{non-resonant vector} $\widehat{\pmb{z}}_p $ and  \textit{non-resonant vector field} $\widehat{\pmb{X}}_p$.  Accordingly, the full Fourier vector and vector field admit the following decomposition:  
\begin{equation}
\label{eqn: vector-field-decompostion}
 \widehat{\pmb{z}} = \begin{pmatrix} \widehat{\pmb{z}}_q \\ \widehat{\pmb{z}}_p \end{pmatrix} \quad  \mathrm{and} \quad \widehat{\pmb{X}} = \begin{pmatrix} \widehat{\pmb{X}}_q \\ \widehat{\pmb{X}}_p \end{pmatrix}.
 \end{equation}
By substituting the resonant components $\widehat{\pmb{z}}_p$ and $ \widehat{\pmb{X}}_p $ into the algebraic system~\eqref{eqn: lattice-real}, we derive the~\textit{$P$-equations} as:
\begin{equation}
\label{eqn: p-eqn-vector}
-\langle \pmb{k}, \pmb{\omega}' \rangle \widehat{\pmb{z}}_p + \pmb{\omega} \odot \widehat{\pmb{z}}_p + \varepsilon \widehat{\pmb{X}}_p= 0.
\end{equation}
\end{itemize}
\end{tcolorbox}

To implement the Newton scheme for numerically computing the solution, we derive the tangent linear operator associated with the $P$-equations~\eqref{eqn: p-eqn-vector}:
\begin{equation}
    \label{eqn: lin-op}
    T = D + \varepsilon S,
\end{equation}
where $D$ denotes the diagonal operator with entries as $D_{j, \pmb{k}} = -\langle \pmb{k}, \pmb{\omega}' \rangle + \omega_j$ and $S = \partial \widehat{\pmb{X}}_p/ \partial \widehat{\pmb{z}}_p$ arises from the nonlinear perturbation. The properties of $S$ is discussed in~\Cref{subsec: decay-prop}.

\subsection{Numerical scheme}
\label{subsec: newton}

Following the derivation of the $P$-equations~\eqref{eqn: p-eqn-vector}, the central computational challenge lies in the treatment of the infinite-dimensional vector $\widehat{\pmb{z}}_p$. Traditional Newton methods typically require truncating the system to a fixed finite dimension. Once the problem is restricted to a finite lattice, however, truncation errors are inevitably introduced. In strongly nonlinear regimes, such errors do not vanish; instead, they may persist and accumulate, potentially growing as the iteration proceeds. To maintain computational tractability while ensuring that the truncation error converges rigorously to zero, we employ a dimension-enlarged Newton scheme. This strategy draws inspiration from the Nash-Moser iteration, originally developed to overcome ``small-divisor'' difficulties  in nonlinear analysis and to establish convergence within infinite-dimensional function spaces~\citep{nash1956imbedding, moser1966rapidlyI, moser1966rapidlyII}. 

To formalize the numerical procedure, we first define the truncated Fourier lattice box $\Lambda_N $ for any positive integer $N \in \mathbb{N}$:
\begin{equation}
\label{eqn: box-fourier-truncated}
\Lambda_N := \left\{ \pmb{k} \in \mathbb{Z}^n: \; \max_{1 \leq i \leq n}|\pmb{k}_i| \leq N \right\}.
\end{equation}
We then introduce the projection operator $P_N$, which restricts functions on the full lattice $\mathbb{Z}^{n}$ to the truncated lattice $\Lambda_N$ box:
\begin{equation} 
\label{eqn: project-operator}
    \left( P_{N} \widehat{\pmb{z}} \right) \left(\pmb{k} \right) := \left\{
    \begin{aligned}
        & \widehat{\pmb{z}}(\pmb{k}),   \quad       &&\mathrm{if}~\pmb{k} \in \Lambda_N,  \\
        & 0,                                            \quad      &&\mathrm{if}~\pmb{k} \notin \Lambda_N.
    \end{aligned}
    \right.
\end{equation}
For the tangent linear operator $T$, we define its restriction to the truncated lattice $\Lambda_N$ as:
\begin{equation}
    \label{eqn: restriction-block}
    T_{N} :=  T\big |_{\pmb{k} \in \Lambda_N}
\end{equation}
 Equivalently, consistent with the projection operator defined in~\eqref{eqn: project-operator}, this restriction can be written as $ T_{N} = P_{N} T P_{N}$.

With these preparation, we describe our alternatiting numerical scheme for updating the frequency and the non-resonant vector.  Starting from the initial frequency  $\pmb{\omega}^{(0)} = \pmb{\omega}$ and the initial non-resonant vector $\widehat{\pmb{z}}_p^{(0)} = \pmb{0}$, we proceed iteratively. At the $r$-th step, the current approximations $\pmb{\omega}^{(r)}$ and $\widehat{\pmb{z}}_p^{(r)}$ serve as inputs for a two-stage process. First, we update the frequency by solving the $Q$-equations~\eqref{eqn: q-eqn-vector}:
\begin{equation}
\label{eqn: q-eqn-numerical}
\left( - \pmb{\omega}^{(r+1)}+ \pmb{\omega} \right) \odot \pmb{a} + \varepsilon \widehat{\pmb{X}}_q\big(\widehat{\pmb{z}}_p^{(r)}; \pmb{a}\big)  = 0
\end{equation}
Next, we update  the non-resonant vector via the $P$-equations~\eqref{eqn: p-eqn-vector}. To this end, we define the full non-resonant vector field: 
\begin{equation}
\label{eqn: non-resonant-vector}
F\big(\pmb{\omega}, \widehat{\pmb{z}}_p; \pmb{a} \big) = -\langle \pmb{k}, \pmb{\omega}' \rangle \widehat{\pmb{z}}_p + \pmb{\omega} \odot \widehat{\pmb{z}}_p + \varepsilon \widehat{\pmb{X}}_p\big(\widehat{\pmb{z}}_p; \pmb{a}\big).
\end{equation}
Departing from classical existence theories for quasi-periodic solutions (see~\citet{bourgain1994construction, bourgain1998quasi}), numerically efficiency necessitates an additional rank-one operator. This operator is derived from the $Q$-equations~\eqref{eqn: q-eqn-vector} to stablize the frequency iteration: 
\begin{equation}
\label{eqn: rank-1}
B = - \frac{1}{e}\left( \frac{\partial \langle \pmb{k}, \widehat{\pmb{X}}_q\rangle}{\partial \widehat{\pmb{z}}_p} \right)  \widehat{\pmb{z}}_p^{\top}
\end{equation}
Let $M \in \mathbb{N}$ be a fixed positive integer. We set the truncation dimension at the $r$-th step as $N_{r+1} := M^{r+1}$, so that the dimension of the truncated space increases by a factor of $M$ at each iteration. The updated non-resonant vector is then obtained via a dimension-enlarged Newton step: 
\begin{equation}
\label{eqn: p-eqn-numerical}
\widehat{\pmb{z}}_{p}^{(r+1)} = \widehat{\pmb{z}}_{p}^{(r)} - \left[ (T + \varepsilon B)_{N_{r+1}}^{-1}\big(  \widehat{\pmb{z}}_{p}^{(r)}, \pmb{\omega}^{(r+1)} ; \pmb{a} \big) \right] F\big(  \widehat{\pmb{z}}_{p}^{(r)}, \pmb{\omega}^{(r+1)} ; \pmb{a} \big), 
\end{equation}
where the non-resonant vector $  \widehat{\pmb{z}}^{(r)}_p$ is supported on the lattice box $\Lambda_{N_r}$, while the updated vector $ \widehat{\pmb{z}}^{(r+1)}_p$ resides in the enlarged lattice box $\Lambda_{N_{r+1}}$. The alternating numerical procedure is illustrated in~\Cref{fig: algorithm-nm}.
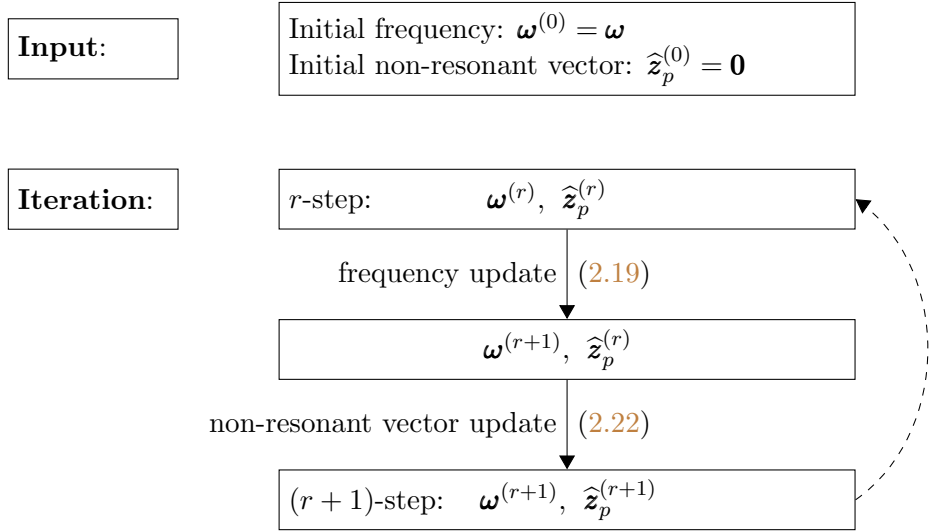
\begin{figure}[htb!]
\centering
\begin{tikzpicture}
\node (n00) [mybox, draw=black] {\textbf{Input}:};
\node (n01) [box, draw=black, right of = n00, xshift =5.3cm] {Initial frequency: $\pmb{\omega}^{(0)} = \pmb{\omega}$\\
                                                 Initial non-resonant vector: $ \widehat{\pmb{z}}^{(0)}_p =  \pmb{0}$ };
                                                 
\node (n10) [mybox, draw=black, below of = n00, yshift = -1cm] {\textbf{Iteration}:};       
\node (n11) [box2, draw=black, right of = n10, xshift =5.3cm] {$r$-step: $\mathrel{\phantom{asdef}}$ \; $\pmb{\omega}^{(r)}$,\;  $  \widehat{\pmb{z}}^{(r)}_p$};       
\node (n100) [box2, draw=black, below of = n11, yshift = -1cm] {$\mathrel{\phantom{astepaabcdef}}$ \; $\pmb{\omega}^{(r+1)}$,\;  $ \widehat{\pmb{z}}^{(r)}_{p} $};        
\node (n101) [box2, draw=black, below of = n100, yshift = -1cm] {$(r+1)$-step: \; $\pmb{\omega}^{(r+1)}$,\;  $ \widehat{\pmb{z}}^{(r+1)}_{p} $};     

  \draw [arrow]  (n11) -- node[left]{frequency update} node[right]{\eqref{eqn: q-eqn-numerical}}(n100);
  \draw [arrow]  (n100) -- node[left]{non-resonant vector update} node[right]{\eqref{eqn: p-eqn-numerical}} (n101);                              
  \draw[arrow, dashed, bend right=55] (n101.east) to (n11.east);
  
\end{tikzpicture}
\caption{A diagram illustrating the alternating numerical procedure.}
\label{fig: algorithm-nm}
\end{figure}

This progressive enlargement is a hallmark of the Nash-Moser strategy: higher-frequency modes are gradually activated as the truncation dimension increases. This allows the scheme to approximate the infinite-dimensional solution while maintaining control over the inverse of the linearized operator. Although the norm of the inverse may grow as the dimension increases, the super-exponential convergence of the Newton update compensates for this growth, thereby ensuring that the scheme converges toward the true solution.

\subsection{Statement of the main theorem}
\label{subsec: main-results}

To establish the convergence theorem, we first define the norms necessary to quantify the infinite-dimensional vectors.  For any Fourier vector $\widehat{\pmb{z}} = \{ \widehat{\pmb{z}}(\pmb{k})  \}_{\pmb{k} \in \mathbb{Z}^n}$, the $\ell_2$-norm is given by
\[
\| \widehat{\pmb{z}}\|_2 = \left( \sum_{\pmb{k} \in \mathbb{Z}^n} \|\widehat{\pmb{z}}(\pmb{k}) \|_2^2 \right)^{\frac12}
\]
Given that the continuous differentiability of the solution with respect to parameter, we know that the Fourier vector $\widehat{\pmb{z}}$ is a function of the frequency. Accordingly, we define the normed space $\mathscr{H}(\mathbb{Z}^n)$ as follows: 
\begin{equation}
\label{eqn: l2-space}
\mathscr{H}(\mathbb{Z}^n) = \left\{ \widehat{\pmb{z}} = \left\{ \widehat{\pmb{z}}(\pmb{k})\right\}_{\pmb{k} \in \mathbb{Z}^n} \bigg | \widehat{\pmb{z}}(\pmb{k}) \in \mathbb{R}^n, \;  \| \widehat{\pmb{z}} \| = \| \widehat{\pmb{z}}\|_2 + \| \partial_{\pmb{\omega}}\widehat{\pmb{z}}\|_2  < \infty  \right\}. 
\end{equation}
where the composite norm is given by $\| \cdot \| = \| \cdot \|_2 + \| \partial_{\omega} (\cdot)\|_2$. Unless otherwise specified, the composite norm $\| \cdot \|$ is used throughout this paper.  In addition, $|\cdot|$ and $|\cdot|_{\infty}$ denote $\ell^1$-norm and $\ell^{\infty}$-norm, respectively, in $\mathbb{Z}^n$ or $\mathbb{R}^n$. To characterize the regularity of the solutions, we define Gevrey decay set within the $\ell_2$ space as:
\begin{equation}
\label{eqn: gevrey-l2}
\mathcal{K}(s) = \left\{ \widehat{\pmb{z}} \in \mathscr{H}(\mathbb{Z}^n) \bigg | \sup_{\pmb{k} \in \mathbb{Z}^n} \big( \| \widehat{\pmb{z}}(\pmb{k}) \| \exp\left\{|\pmb{k}|^s \right\} \big) \leq 1 \right\},
\end{equation}
where the sub-exponential decay of the Fourier coefficients is essential for maintaining the regularity of the iterative scheme.\footnote{Technically, this bound may be any positive constant; for convenience, we set it to $1$ without loss of generality.} We now present the main theoretical statement of this study: the a priori error estimate for the alternating numerical procedure illustrated in~\Cref{fig: algorithm-nm}.

\begin{theorem}[Convergence and Error Estimation]
\label{thm: main}
 Let  $ \Omega \subseteq \mathbb{R}^n $ be a bounded domain and $\tau > n-1$ be a fixed parameter. There exists a critical threshold $\varepsilon_0 = \varepsilon_0(H_1, \Omega; \pmb{a}) > 0$ such that for any $0 < \varepsilon \leq \varepsilon_0$, the following properties hold:
 \begin{itemize}
 \item[1.] \textbf{Measure of resonance}: There exists two small Lebesgue measure $\kappa =  \kappa(n, \tau, \Omega) >0$ and $\delta = \delta(\varepsilon) > 0$, where  $\delta \rightarrow 0$ as $\varepsilon \rightarrow 0$, such that the excluded set of ``bad'' frequencies $\Omega^{\star}$ satisfies $\mathrm{mes}(\Omega^{\star}) \leq \kappa + \delta $.
 
 \item[2.] \textbf{Iterative sequence}: For any initial frequency $\pmb{\omega}^{(0)} \in \Omega \setminus \Omega^{\star}$ and initial lattice vector $\widehat{\pmb{z}}^{(0)}$ defined in~\eqref{eqn: init},  the alternating numerical procedure, comprising the frequency update~\eqref{eqn: q-eqn-numerical} and the non-resonant vector update~\eqref{eqn: p-eqn-numerical}, generates a sequence of iterates $\{ (\pmb{\omega}^{(r)}, \widehat{\pmb{z}}^{(r)}) \}_{r=0}^{\infty}$. 
 
 \item[3.] \textbf{Convergence}: There exists a Gevrey exponent $s = s(\varepsilon)$ such that this sequence remains within the product space $(\Omega \setminus \Omega^\star) \times \mathcal{K}(s)$ and converges to the exact solution pair $(\boldsymbol{\omega}^\star, \mathbf{z}^\star)$.

 \item[4.] \textbf{A priori estimates}: The convergence is characterized by the following error bounds:
 \begin{subequations}
\label{eqn: convergence-vector-frequency}
\begin{empheq}[left=\empheqlbrace]{align} 
         &  \| \widehat{\pmb{z}}^{(r)} - \widehat{\pmb{z}}^{\star} \| \leq \exp \left\{ -\frac{3}{2} (M^s)^r \right\},      \label{eqn: z-hat-converge}                 \\
         & |\pmb{\omega}^{(r)} - \pmb{\omega}^{\star}| \leq  \exp \left\{ -\frac{3}{2} (M^s)^r \right\}.                          \label{eqn: frequency-hat-converge}
\end{empheq}  
\end{subequations}
\end{itemize}
Furthermore, for any $t \in [0, N_r]$, the approximate solution $\pmb{z}^{(r)}(t)$ satisfies the time-domain error bound:
\begin{equation}
\label{eqn: convergence-soln}
    \| \pmb{z}^{(r)}(t) - \pmb{z}^{\star}(t) \| \leq \exp \left\{ -(M^s)^r \right\}.
\end{equation}
This implies that as $r \rightarrow \infty$, the numerical solution $\pmb{z}^{(r)}(t)$ converges to the exact quasi-periodic solution $\pmb{z}^{\star}(t) $ for any $t \geq 0$. 
\end{theorem}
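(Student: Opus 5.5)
The proof will be an induction on the step index $r$, in the style of the Craig--Wayne--Bourgain Nash--Moser scheme. At each step we keep a collection of inductive hypotheses and propagate them from $r$ to $r+1$.

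\textbf{Inductive hypotheses.} At step $r$ we assume the following.
\begin{itemize}
\item[(H1)] $\widehat{\pmb{z}}^{(r)}_p$ is supported in $\Lambda_{N_r}$ and lies in $\mathcal{K}(s)$.
\item[(H2)] The increment obeys $\|\widehat{\pmb{z}}^{(r)}-\widehat{\pmb{z}}^{(r-1)}\|\le \delta_r:=\exp\{-\tfrac32 (M^s)^{r}\}$, up to a constant, and similarly $|\pmb{\omega}^{(r)}-\pmb{\omega}^{(r-1)}|\le\delta_r$.
\item[(H3)] The residual obeys $\|F(\widehat{\pmb{z}}^{(r)}_p,\pmb{\omega}^{(r+1)};\pmb{a})\|\le \delta_r^{2}$ on $\Lambda_{N_r}$. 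Its part outside $\Lambda_{N_r}$ is bounded by $\varepsilon e^{-N_r^{s}}$, using the Gevrey decay of $\widehat{\pmb{z}}^{(r)}$ and the polynomial structure of $H_1$ (a convolution of Gevrey sequences is again Gevrey, losing only a constant).
\item[(H4)] For $\pmb{\omega}$ outside an excluded set $\Omega_r^\star$, the truncated operator satisfies $\|(T+\varepsilon B)_{N_{r+1}}^{-1}\|\le e^{N_{r+1}^{\sigma}}$ for some $\sigma<s$. In addition, its matrix elements decay like $e^{-c|\pmb{k}-\pmb{k}'|^{s}}$ away from the diagonal.
\end{itemize}

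\textbf{Newton step.} Given (H1)--(H4), the frequency update~\eqref{eqn: q-eqn-numerical} is explicit. Since $\widehat{\pmb{X}}_q$ is Lipschitz in $\widehat{\pmb{z}}_p$ with constant $O(\varepsilon)$, we get $|\pmb{\omega}^{(r+1)}-\pmb{\omega}^{(r)}|\lesssim\varepsilon\|\widehat{\pmb{z}}^{(r)}-\widehat{\pmb{z}}^{(r-1)}\|$. The rank-one term $\varepsilon B$ is chosen precisely so that $T+\varepsilon B$ is the Jacobian of the \emph{coupled} map $\widehat{\pmb{z}}_p\mapsto F(\widehat{\pmb{z}}_p,\pmb{\omega}(\widehat{\pmb{z}}_p))$; this is exactly what the $\partial_{\pmb{\omega}}$ component of the composite norm is for. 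Taylor's formula then gives the new residual as
\[
\text{(quadratic term)}+\text{(truncation tail)}\lesssim e^{2N_{r+1}^{\sigma}}\delta_r^{4}+\varepsilon e^{-N_{r+1}^{s}} .
\]
Since $N_{r+1}=M^{r+1}$ and $\delta_r=\exp\{-\tfrac32 N_r^{s}\}$, both terms are bounded by $\delta_{r+1}^2$, provided $\sigma<s$ and $M$ is large relative to $s$. This is the usual balance between super-exponential Newton gain and sub-exponential growth of the inverse. Gevrey decay of the new iterate (H1) follows from the off-diagonal decay in (H4): an operator with $e^{-c|\pmb{k}-\pmb{k}'|^s}$ decay maps $\mathcal{K}(s)$ into itself, with a slightly smaller constant absorbed by $\varepsilon$.

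\textbf{Main obstacle: (H4).} Establishing the inverse bound is the heart of the proof. On the diagonal, $|D_{j,\pmb{k}}|=|\langle\pmb{k},\pmb{\omega}'\rangle-\omega_j|$. We first exclude the Diophantine-bad set $\{|\langle\pmb{k},\pmb{\omega}\rangle-\omega_j|<\gamma|\pmb{k}|^{-\tau}\}$. Its measure is at most $\sum_{\pmb k}\gamma|\pmb{k}|^{-\tau-1}$, which is finite because $\tau>n-1$; this gives the contribution $\kappa$. The operator is nevertheless not diagonally dominant at scale $N_{r+1}$, so we follow Bourgain's multiscale approach:
\begin{itemize}
\item cover $\Lambda_{N_{r+1}}$ by boxes of the earlier scale $N_r$, on which the inverse is controlled inductively;
\item call the remaining boxes ``singular'' and use Cartan's lemma / semialgebraic set estimates in $\pmb{\omega}$ to show there are few of them;
\item glue the local inverses with the resolvent identity.
\end{itemize}
The frequencies removed at step $r$ have measure at most $e^{-N_r^{c}}$. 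Summing over $r$ gives $\delta(\varepsilon)\to0$, which proves item~1.

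I expect this step to be genuinely hard. It requires $\pmb{\omega}^{(r+1)}$ to stay in the good set, which is where the $\partial_{\pmb\omega}$ part of the norm is used: it lets us transfer the good set through the map $\pmb\omega\mapsto\pmb\omega^\star(\pmb\omega)$, a near-identity map with Jacobian $1+O(\varepsilon)$.

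\textbf{Conclusion.} Summing the increments gives Cauchy sequences. Their limit $(\pmb{\omega}^\star,\widehat{\pmb z}^\star)$ solves~\eqref{eqn: q-eqn-vector}--\eqref{eqn: p-eqn-vector}, and the tail sums give~\eqref{eqn: z-hat-converge}--\eqref{eqn: frequency-hat-converge}, since $\sum_{m\ge r}\delta_m\lesssim\delta_r$. For the time-domain bound we write
\[
\pmb z^{(r)}(t)-\pmb z^\star(t)=\sum_{\pmb k}\Bigl(\widehat{\pmb z}^{(r)}(\pmb k)e^{i\langle \pmb k,\pmb\omega^{(r)}\rangle t}-\widehat{\pmb z}^\star(\pmb k)e^{i\langle\pmb k,\pmb\omega^\star\rangle t}\Bigr).
\]
The phase difference contributes at most $|\pmb k|\,t\,|\pmb\omega^{(r)}-\pmb\omega^\star|$, weighted by the Gevrey-decaying coefficients. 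For $t\le N_r$ this is at most $N_r\,\delta_r\sum_{\pmb k}|\pmb k|e^{-|\pmb k|^s}$. The factor $N_r=M^r$ is absorbed by the gap between $\exp\{-\tfrac32(M^s)^r\}$ and $\exp\{-(M^s)^r\}$, which yields~\eqref{eqn: convergence-soln}.
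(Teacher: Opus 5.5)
Your architecture matches the paper's: truncated Newton steps on growing boxes, multiscale inverse bounds with Diophantine and Cartan-type exclusions, Cauchy tail sums, and the coefficient/phase split for the time-domain bound. The gap is that your induction does not close as written, because of how you treat the residual outside the current box. In (H3) you bound the part of $F(\widehat{\pmb z}^{(r)}_p)$ outside $\Lambda_{N_r}$ only through the Gevrey decay of the vector field, by $\varepsilon e^{-N_r^{s}}$. In the Newton step you then bound the truncation tail by $\varepsilon e^{-N_{r+1}^{s}}$ and assert it is at most $\delta_{r+1}^2=e^{-3N_{r+1}^{s}}$. For fixed $\varepsilon$ this is false as soon as $e^{2N_{r+1}^{s}}>1/\varepsilon$.

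The error also propagates. This tail lives in $\Lambda_{dN_r}\subset\Lambda_{N_{r+1}}$, so the next truncated solve sees it. Your hypotheses then only give $\|\Delta^{(r)}\|\lesssim e^{N_{r+1}^{\sigma}}\varepsilon e^{-N_r^{s}}$, which for large $r$ is far larger than $\delta_{r+1}=e^{-\frac32 M^{s}N_r^{s}}$, contradicting (H2). Likewise the quadratic term is only $\lesssim\varepsilon^2e^{2N_{r+1}^{\sigma}-2N_r^{s}}$, not $e^{2N_{r+1}^{\sigma}}\delta_r^4$. Since $2<3M^{s}$, that bound exceeds $\delta_{r+1}^2$ for large $r$. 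A residual tail carrying no smallness from the previous step caps the rate at roughly $e^{-N_r^{s}}$, so \eqref{eqn: z-hat-converge}--\eqref{eqn: frequency-hat-converge} cannot come out.

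The missing idea is that after a Newton step there is no independent Gevrey tail. Because $H_1$ has degree $d$ and $M\ge4d$, $F(\widehat{\pmb z}^{(r)}_p)$ is supported in $\Lambda_{N_{r+1}/4}$ (see \eqref{eqn: support-d-enlarge}). The truncated solve therefore cancels the linear term inside $\Lambda_{N_{r+1}}$, and the new residual is $[(T+\varepsilon B)-(T+\varepsilon B)_{N_{r+1}}]\Delta^{(r)}$ plus quadratic terms. In particular, its part outside $\Lambda_{N_{r+1}}$ is built from $\Delta^{(r)}$ and inherits its smallness.

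The paper then splits $\Delta^{(r)}=P_{N_{r+1}/3}\Delta^{(r)}+(I-P_{N_{r+1}/3})\Delta^{(r)}$ as in \eqref{eqn: decompostion-t-tr}. The first piece reaches the exterior of $\Lambda_{N_{r+1}}$ only across distance $\gtrsim N_{r+1}$, so the off-diagonal decay of $T+\varepsilon B$ supplies a factor $e^{-cN_{r+1}^{s}}$. The second piece is small by the localization condition for $(T+\varepsilon B)_{N_{r+1}}^{-1}$ acting on data supported in $\Lambda_{N_{r+1}/4}$. The truncation error is thus the norm of the old residual times $e^{-cN_{r+1}^{s}}$.

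Choosing $s$ so small that $M^{s}$ is close to $1$ closes the induction $\|F\|\le e^{-2N_r^{s}}\Rightarrow\|F\|\le e^{-2N_{r+1}^{s}}$. The paper takes $95M^{s}\le96$, so that $\frac{2}{M^{s}}+\frac{1}{48}\ge2$. Note that the constraint is that $s\log M$ be small, not that $M$ be large relative to $s$; even your own quadratic term needs $M^{s}<2$.

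Your argument for (H1) has a related flaw. A kernel with decay $e^{-c|\pmb k-\pmb k'|^{s}}$, $c<1$, maps $\mathcal{K}(s)$ only into sequences decaying like $e^{-c|\pmb k|^{s}}$. That is a loss in the exponent, which no factor of $\varepsilon$ absorbs, and it compounds over steps. The paper avoids this entirely. $\Delta^{(r)}$ is supported in $\Lambda_{N_{r+1}}$ with $\|\Delta^{(r)}\|\le e^{-\frac74 N_r^{s}}\le e^{-\frac32 N_{r+1}^{s}}$ once $6M^{s}\le7$. This gives the shell bounds \eqref{eqn: vector-decay-r+1} and membership in $\mathcal{K}(s)$ with no loss (\Cref{rem: sum-invert}).
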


While the main theorem is established in $\ell_2$, the result extends to  more compact Gevrey spaces. For any $s' \in [0, s)$, we define the norm 
\[
\|\widehat{\pmb{z}}\|_{s'} = \left( \sum_{\pmb{k} \in \mathbb{Z}^n }\| \widehat{\pmb{z}}(\pmb{k}) \|^2  \exp\left\{ |\pmb{k}|^{2s'} \right\} \right)^{\frac12}
\]
The proof remains essentially unchanged in this setting. Moreover, this regularity can be generalized to the analytic setting.  Let $\vartheta = \vartheta(\varepsilon)$ be an analytic strip that depends on the perturbation parameter. For any $\vartheta' \in [0, \vartheta')$, we define the analytic norm and the analytic decay condition as:
\[
\|\widehat{\pmb{z}}\|_{\vartheta'} = \left( \sum_{\pmb{k} \in \mathbb{Z}^n }\| \widehat{\pmb{z}}(\pmb{k}) \|^2  \exp\left\{ 2 \vartheta' |\pmb{k}| \right\} \right)^{\frac12} \quad  \mathrm{and}\quad \sup_{\pmb{k} \in \mathbb{Z}^n} \big( \| \widehat{\pmb{z}}(\pmb{k}) \| \exp\left\{ \vartheta |\pmb{k}| \right\} \big) \leq 1.
\]
\Cref{thm: main} remains valid in this setting as well.  The proof of this theorem is established in~\Cref{sec: completion}, supported by the technical foundations laid in~\Cref{sec: small} and~\Cref{sec: multiscale}; a comprehensive outline of the proof strategy is provided~\Cref{sec: priori-error}. 

\begin{remark}
\label{rem: main}

The convergence bounds~\eqref{eqn: z-hat-converge} and~\eqref{eqn: frequency-hat-converge} represent a significant conceptual departure from classical KAM theory~\citep{arnold1963small, moser1962invariant}. While classical theory treats action variables as regular perturbations to preserve invariant tori, the present framework explicitly accounts for frequency drift in the existence of quasi-periodic solutions. Unlike standard KAM results, these estimates here incorporate a refined decay property of the Fourier coefficients, interpreting the solution's regularity itself as a singular perturbation~\citep{Witelski:2009}.  The iteration lemma (\Cref{lem: iteration}) hinges on the super-exponential convergence of the vector field; consequently, the exponent $s$ is not arbitrary. As the perturbation parameter vanish ($\varepsilon \rightarrow 0$),  the exponent must vanish ($s \rightarrow 0$) at a rate faster than any polylogarithmic function of $1/\varepsilon$. In this regime, the admissible regularity becomes ``almost zero'',  restricting the solution's regularity nearly to $\ell_2$. This leads to a significant qualitative insight: even if the initial state $\widehat{\pmb{z}}^{(0)}$ possesses high regularity, the limiting state $\widehat{\pmb{z}}^{\star}$ is necessarily confined to a low-regularity class.~\Cref{thm: main} remains valid in the analytic setting via the same inductive estimation strategy, provided that the strip width shrinks to zero ($\vartheta \rightarrow 0$) as $\varepsilon \rightarrow 0$. Finally, the convergence threshold $\varepsilon_0 = \varepsilon_0(H_1, \Omega, \pmb{a})$ is fundamentally determined by the ``input data'' of the system,  specifically the perturbation structure $H_1$, the frequency set $\Omega$, and the initial amplitude vector $\pmb{a}$. For convenience, we assume $\varepsilon_0 \leq 1/2$ without loss of generality throughout the remainder of the paper. 
\end{remark}

\section{Basic properties and conditions}
\label{sec: priori-error}

The analysis in the following three sections is confined to  the Gevrey decay set $\mathcal{K}(s)$ as defined in~\eqref{eqn: gevrey-l2}. In this initial section, we establish the basic properties necessary for the subsequent analysis.  We then state the conditions required for the numerical implementation of the inverse of the tangent linear operator $T + \varepsilon B$, within the restricted lattice box $\Lambda_N$. Finally, we outline the overall strategy of the proof, with the technical details elaborated in~\Cref{sec: small} and~\Cref{sec: multiscale}.

\subsection{Basic properties within the Gevrey decay set}
\label{subsec: decay-prop}

Let $\mathbb{T}^n = [0, 2\pi]^n$ denote the $n$-dimensional torus. Recalling the vector field associated with the perturbation $H_1$, we write down its Fourier coefficients as
\begin{equation}
\label{eqn: vec-fourier}
\widehat{\pmb{X}}(\pmb{k}) = \frac{1}{(2\pi)^{n}} \int_{\mathbb{T}^{n}} \frac{\partial H}{\partial \overline{\pmb{z}}} e^{-i \langle \pmb{k}, \pmb{\theta} \rangle } d\pmb{\theta},
\end{equation}
for any $\pmb{k} \in \mathbb{Z}^{n}$. Since the perturbation $H_1$ is a polynomial with real coefficients, we can characterize the regularity and boundedness of the resulting vector field within the Gevrey decay set. 
\begin{prop}
\label{prop: vector-field}
Suppose that the Fourier vector belongs to the Gevrey decay set as defined in~\eqref{eqn: gevrey-l2}, i.e., $\widehat{\pmb{z}} \in \mathcal{K}(s)$. Then  there exists a constant $\gamma_1 = \gamma_1(H_1, s) > 0$  such that the components of the associated vector field satisfy
\begin{equation}
\label{eqn: vector-field-Gevrey}
\sup_{\pmb{k} \in \mathbb{Z}^n} \left( \| \widehat{\pmb{X}}(\pmb{k})\| \exp\left\{ |\pmb{k}|^s \right\} \right) \leq \gamma_1. 
\end{equation}
Furthermore, there exists a constant  $\gamma_2 = \gamma_2(H_1, s) > 0$ such that the vector fields satisfy the uniform bound:
\begin{equation}
\label{eqn: vector-field-bound}
\max \big \{ \| \widehat{\pmb{X}}_q\|, \| \widehat{\pmb{X}}_p\|  \big \} \leq \| \widehat{\pmb{X}}\| \leq \gamma_2.
\end{equation}
\end{prop}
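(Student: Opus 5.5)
The plan is to exploit that $H_1$ is a polynomial, so $\pmb{X} = \partial H_1/\partial \overline{\pmb{z}}$ is a finite sum of monomials $\pmb{z}^{\pmb{\alpha}'}\overline{\pmb{z}}^{\pmb{\beta}'}$ of bounded degree $d \le d_0 := \max\{|\pmb{\alpha}|+|\pmb{\beta}|\}-1$. In Fourier space each monomial becomes an iterated discrete convolution of the sequences $\widehat{z}_j$ and $\widehat{\overline{z}}_j(\pmb{k}) = \widehat{z}_j(-\pmb{k})$ (real by the standing assumption). We therefore bound a single $d$-fold convolution in the Gevrey weight and sum over the finitely many monomials, with coefficients $|h_{\pmb{\alpha}\pmb{\beta}}|$ times combinatorial factors. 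The constants $\gamma_1,\gamma_2$ then depend only on $H_1$ (number of terms, coefficients, degree) and on $s$.

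\textbf{Pointwise Gevrey bound.} For a $d$-fold product we write
\[
\widehat{(f_1\cdots f_d)}(\pmb{k}) = \sum_{\pmb{k}_1+\cdots+\pmb{k}_d=\pmb{k}} \prod_i \widehat{f}_i(\pmb{k}_i), \qquad |\widehat{f}_i(\pmb{k}_i)| \le e^{-|\pmb{k}_i|^s}.
\]
For $0<s\le 1$ the function $x\mapsto x^s$ is subadditive, so $|\pmb{k}|^s \le \sum_i |\pmb{k}_i|^s$. The obvious estimate of this sum would spend all the decay on the factor $e^{|\pmb{k}|^s}$ and leave nothing to make the convolution sum converge. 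We therefore use the sharper inequality
\[
\sum_i |\pmb{k}_i|^s - |\pmb{k}|^s \ge (2^s - 2)\,\cdots
\]
in the following form. Order the summands so that $|\pmb{k}_1|$ is the largest. Then
\[
\sum_{i} |\pmb{k}_i|^s \ge |\pmb{k}|^s + c_s \sum_{i\ge 2}|\pmb{k}_i|^s
\]
with $c_s = 2 - 2^s >0$. This follows from $(a+b)^s \le a^s + (2^s-1)b^s$ for $a\ge b$, which uses concavity, applied iteratively. It yields
\[
e^{|\pmb{k}|^s}\,|\widehat{(f_1\cdots f_d)}(\pmb{k})| \le d \Big(\sum_{\pmb{m}\in\mathbb{Z}^n} e^{-c_s|\pmb{m}|^s}\Big)^{d-1} =: C(s,n,d) < \infty.
\]
The same argument bounds the $\partial_{\pmb{\omega}}$ part of the composite norm. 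By the Leibniz rule, $\partial_{\pmb{\omega}}$ falls on one factor, and the resulting term is handled identically, since $\mathcal{K}(s)$ controls $\|\widehat{\pmb{z}}(\pmb{k})\|$, which includes the $\pmb{\omega}$-derivative. Summing over monomials gives~\eqref{eqn: vector-field-Gevrey} with $\gamma_1 = \sum |h_{\pmb{\alpha}\pmb{\beta}}|\,(\text{degree factor})\, C(s,n,d_0)$.

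\textbf{Uniform $\ell_2$ bound.} Since $\widehat{\pmb{X}}_q$ and $\widehat{\pmb{X}}_p$ are complementary coordinate restrictions of $\widehat{\pmb{X}}$, the first inequality in~\eqref{eqn: vector-field-bound} is immediate. For the second we square the pointwise bound and sum:
\[
\|\widehat{\pmb{X}}\| \le 2\gamma_1 \Big(\sum_{\pmb{k}} e^{-2|\pmb{k}|^s}\Big)^{1/2} =: \gamma_2,
\]
which is finite for every $s>0$ by comparison with a radial integral $\int_0^\infty r^{n-1} e^{-2r^s}\,dr$.

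\textbf{Main obstacle.} The delicate step is the convolution estimate. Naive subadditivity gives no summable remainder, so the improved concavity inequality with $c_s>0$ is essential. One should also note that $c_s\to 0$ and $\sum_{\pmb{m}} e^{-c_s|\pmb{m}|^s}$ blows up as $s\to 0$, which is why $\gamma_1,\gamma_2$ must depend on $s$. A secondary point is to check that $\partial_{\pmb{\omega}}$ commutes with the convolution. This holds because the polynomial structure makes everything absolutely convergent under the Gevrey bounds.
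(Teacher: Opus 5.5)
Your proposal is correct and is essentially the paper's own argument. You expand $\widehat{\pmb{X}}$ as finitely many iterated convolutions, show that Gevrey decay survives convolution via $a^s+b^s-(a+b)^s\ge(2-2^s)\min\{a,b\}^s$ (this is exactly \Cref{lemma: App-1}, equivalent to your concavity inequality, feeding \Cref{lemma: App-2}), treat the $\partial_{\pmb{\omega}}$ part by Leibniz, and sum the squared pointwise bounds to get the $\ell_2$ estimate. The remaining differences are cosmetic, plus one slip in a side remark: you bound the $d$-fold convolution in one step by singling out the largest summand, whereas the paper proves the two-fold case and inducts; and $c_s=2-2^s\to 1$ as $s\to 0$ (it degenerates as $s\to 1$, which is why $s<1$ is needed), so the blow-up of $\sum_{\pmb{m}}e^{-c_s|\pmb{m}|^s}$ as $s\to 0$ comes from the weakening weight $e^{-|\pmb{m}|^s}$ itself, not from $c_s\to 0$.
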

The proof relies on the fact that the convolution operation preserves Gevrey decay properties (see~\Cref{subsec: vector-field} for technical details). We now examine the entries of its tangent linear operators. By applying the Fourier expansion to the second-order derivative, we obtain the following kernels for any $\pmb{k}, \pmb{k}' \in \mathbb{Z}^n$:
\[
\left\{ \begin{aligned}
& \mathcal{H}_1(\pmb{k}, \pmb{k}')  = \frac{1}{(2\pi)^{n}} \int_{\mathbb{T}^{n}} \frac{\partial^2 H}{\partial \pmb{z} \partial \overline{\pmb{z}}} e^{-i \langle \pmb{k} - \pmb{k}', \pmb{\theta} \rangle } d\pmb{\theta},  \\
&  \mathcal{H}_2(\pmb{k}, \pmb{k}') = \frac{1}{(2\pi)^{n}} \int_{\mathbb{T}^{n}} \frac{\partial^2 H}{ \partial \overline{\pmb{z}}^2} e^{-i \langle \pmb{k} + \pmb{k}', \pmb{\theta} \rangle } d\pmb{\theta}.  
 \end{aligned} \right.
\]
From these expressions, it follows that the kernels possess a Toeplitz-like and Hankel-like structures, respectively: $ \mathcal{H}_1(\pmb{k}, \pmb{k}') =  \mathcal{H}_1(\pmb{k} - \pmb{k}')$ and $ \mathcal{H}_2(\pmb{k}, \pmb{k}') =  \mathcal{H}_2(\pmb{k} + \pmb{k}')$. Consequently, the total tangent linear operator takes the form
\begin{equation}
 \label{eqn: tangent-linear-operator}
 \frac{\partial \widehat{\pmb{X}}(\pmb{k})}{\partial \widehat{\pmb{z}}(\pmb{k}')}  = \mathcal{H}_1(\pmb{k} - \pmb{k}') + \mathcal{H}_2(\pmb{k} + \pmb{k}') 
\end{equation}
Recall that $S = \partial \widehat{\pmb{X}}_p/ \partial \widehat{\pmb{z}}_p$, introduced in~\eqref{eqn: lin-op}, represents the non-resonant block of the tangent linear operator $\partial \widehat{\pmb{X}}(\pmb{k}) / \partial \widehat{\pmb{z}}(\pmb{k}')$. Following the decomposition $S = S_1 + S_2$, where the entries satisfy $S(\pmb{k}, \pmb{k}') = S_1(\pmb{k}-\pmb{k}') + S_2(\pmb{k} + \pmb{k}')$, its regularity and boundedness are characterized as follows.

\begin{prop}
\label{prop: convolution}
Assume $\widehat{\pmb{z}} \in \mathcal{K}(s)$. There exist constants $\gamma_3, \gamma_4, \gamma_5 > 0$, depending on $H_1$ and $s$, such that the conponents of $S$ satisfy:
\begin{subequations}
\label{eqn: hessian-gevrey}
\begin{empheq}[left=\empheqlbrace]{align} 
         & \sup_{\pmb{k} , \pmb{k}' \in \mathcal{S}} \left(\|S_1(\pmb{k}-\pmb{k}') \| \exp\left\{ |\pmb{k} - \pmb{k}'|^s \right\} \right) \leq \gamma_3 \label{eqn: hessian-gevrey-1} \\
         & \sup_{\pmb{k} , \pmb{k}' \in \mathcal{S}} \left(\|S_2(\pmb{k} + \pmb{k}') \| \exp\left\{ |\pmb{k} + \pmb{k}'|^s \right\} \right) \leq \gamma_4 \label{eqn: hessian-gevrey-2}
 \end{empheq}        
 \end{subequations}
Furthermore, the following uniform bound holds:
\begin{equation}
\label{eqn: hessian-bound}
\max \left\{ \left\| \frac{\partial \widehat{\pmb{X}}_q}{\partial \widehat{\pmb{z}}_p}  \right\|, \|S \| \right\} \leq  \left\| \frac{\partial \widehat{\pmb{X}}}{\partial \widehat{\pmb{z}}}  \right\| \leq \gamma_5. 
\end{equation}
\end{prop}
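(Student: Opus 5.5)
The plan mirrors the proof of \Cref{prop: vector-field}: we reduce the entries of $S$ to Fourier coefficients of polynomials in $\pmb{z},\overline{\pmb{z}}$, and use that convolution preserves Gevrey decay.

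\textbf{Reduction to Fourier coefficients.} Since $H_1$ is a real polynomial of bounded degree $d$, the second derivatives $\partial^2 H_1/\partial z_i\partial\overline{z}_j$ and $\partial^2 H_1/\partial\overline{z}_i\partial\overline{z}_j$ are finite sums of monomials $\pmb{z}^{\pmb{\alpha}}\overline{\pmb{z}}^{\pmb{\beta}}$ with $|\pmb{\alpha}|+|\pmb{\beta}|\le d-2$. The unperturbed part $H_0$ contributes only the diagonal $D$, so it plays no role in $S$. Each entry of $S_1$ and $S_2$ is therefore a finite linear combination, with coefficients $h_{\pmb{\alpha}\pmb{\beta}}$, of Fourier coefficients of such monomials. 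The Fourier coefficient of a monomial is an iterated convolution
\[
\widehat{z}_{i_1}*\cdots*\widehat{z}_{i_p}*\widehat{\overline{z}}_{j_1}*\cdots*\widehat{\overline{z}}_{j_q}, \qquad p+q\le d-2 .
\]
Using $\widehat{\overline{\pmb{z}}}(\pmb{k})=\overline{\widehat{\pmb{z}}(-\pmb{k})}$, each factor inherits the decay $\|\widehat{z}(\pmb{k})\|\le e^{-|\pmb{k}|^s}$ from $\widehat{\pmb{z}}\in\mathcal{K}(s)$.

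\textbf{Convolution lemma.} The key step is a Gevrey convolution lemma, the same one used for \Cref{prop: vector-field}: if $|u(\pmb{k})|\le A e^{-|\pmb{k}|^s}$ and $|v(\pmb{k})|\le B e^{-|\pmb{k}|^s}$ with $0<s<1$, then
\[
|(u*v)(\pmb{k})|\le C(n,s)\,AB\,e^{-|\pmb{k}|^s}.
\]
I would prove it from subadditivity $|\pmb{k}|^s\le |\pmb{k}-\pmb{l}|^s+|\pmb{l}|^s$, sharpened for $0<s<1$ to
\[
|\pmb{k}-\pmb{l}|^s+|\pmb{l}|^s\ \ge\ |\pmb{k}|^s+c_s\min\{|\pmb{l}|,|\pmb{k}-\pmb{l}|\}^s .
\]
With this, the sum over $\pmb{l}$ is dominated by $2\sum_{\pmb{l}} e^{-c_s|\pmb{l}|^s}<\infty$. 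Iterating at most $d-2$ times gives \eqref{eqn: hessian-gevrey-1} and \eqref{eqn: hessian-gevrey-2} with $\gamma_3,\gamma_4$ depending on $H_1$ (through $d$ and $\sum|h_{\pmb{\alpha}\pmb{\beta}}|$) and on $s$ (through $C(n,s)$). The indices $\pmb{k}\pm\pmb{k}'$ arise because $\mathcal{H}_1$ and $\mathcal{H}_2$ are Toeplitz and Hankel kernels. Restricting the indices to a sub-block, such as the $P$-block, can only decrease the supremum. The $\partial_{\pmb{\omega}}$ part of the composite norm is handled by the product rule: $\partial_{\pmb{\omega}}$ of a convolution product is a sum of $d-2$ terms, each with one factor replaced by $\partial_{\pmb{\omega}}\widehat{z}$. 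Each such factor is bounded, since the $\mathcal{K}(s)$ condition controls the full composite norm pointwise in $\pmb{k}$.

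\textbf{Operator bound.} For \eqref{eqn: hessian-bound}, the first inequality is immediate, because $\partial\widehat{\pmb{X}}_q/\partial\widehat{\pmb{z}}_p$ and $S$ are sub-blocks of $\partial\widehat{\pmb{X}}/\partial\widehat{\pmb{z}}$, obtained by compression with coordinate projections of norm $1$. For the second inequality I would use Schur's test on the kernel $K(\pmb{k},\pmb{k}')=\mathcal{H}_1(\pmb{k}-\pmb{k}')+\mathcal{H}_2(\pmb{k}+\pmb{k}')$. Both its row sums and column sums are bounded by
\[
(\gamma_3+\gamma_4)\sum_{\pmb{m}\in\mathbb{Z}^n}e^{-|\pmb{m}|^s}<\infty ,
\]
so the $\ell_2$ operator norm is bounded, and the same argument applies to the $\partial_{\pmb{\omega}}$ derivative. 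This yields $\gamma_5$.

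\textbf{Main obstacle.} I expect the convolution lemma to be the main difficulty. Naive subadditivity alone loses the whole decay, so the sharpened inequality with $c_s>0$ is essential. The constant $C(n,s)$ behaves like $\sum_{\pmb{l}} e^{-c_s|\pmb{l}|^s}$, which blows up as $s\to0$. This degeneration is consistent with the dependence of the constants on $s$ and with \Cref{rem: main}.
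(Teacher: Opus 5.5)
Your proposal is correct and follows essentially the same route as the paper. The entrywise Gevrey bounds come from the convolution lemma built on $a^s+b^s-(a+b)^s\ge(2-2^s)\min\{a,b\}^s$ (\Cref{lemma: App-1} and \Cref{lemma: App-2}), and the operator bound comes from $\ell^1$-summability of the Toeplitz and Hankel kernels, where your Schur test is equivalent to the paper's Cauchy--Schwarz (Young) estimate $\|\mathcal{H}_i\widehat{\pmb{z}}\|\le\big(\sum_{\pmb{k}}\|\mathcal{H}_i(\pmb{k})\|\big)\|\widehat{\pmb{z}}\|$; your compression argument and product-rule treatment of $\partial_{\pmb{\omega}}$ make explicit what the paper leaves implicit.
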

Note that the entries $S_1(\pmb{k}-\pmb{k}')$ and $S_2(\pmb{k}+\pmb{k}')$ are $n \times n$ matrices; thus, the norm in~\eqref{eqn: hessian-gevrey} refers to the standard matrix norm, i.e., the largest singular value. Since $\partial \widehat{\pmb{X}}/\partial \widehat{\pmb{z}}$ and $S$ are the real symmetric Hessian-type operators,  the norms in~\eqref{eqn: hessian-bound} are understood in the spectral sense, i.e., the maximum absolute value of the eigenvalues. These results follow from Young's convolution inequality and the property that Gevrey decay is preserved under convolution. Detailed proofs are provided in~\Cref{subsec: convolution}. We also require the regularity and boundedness properties for the frequency iteration operator $B$, as defined in~\eqref{eqn: rank-1}.
\begin{prop}
\label{prop: B-operator}
Assume $\widehat{\pmb{z}} \in \mathcal{K}(s)$. There exist constants $\gamma_6, \gamma_7 > 0$ depending on $H_1$ and $s$, such that the components of $B$ satisfy:
\begin{equation}
\label{eqn: b-gevrey}
\sup_{\pmb{k} , \pmb{k}' \in \mathcal{S}} \left( \| B(\pmb{k}, \pmb{k}') \| \exp\left\{ |\pmb{k}|^s + |\pmb{k}'|^s \right\} \right) \leq  \gamma_6.
\end{equation}
Furthermore, the following uniform bound holds:
\begin{equation}
\label{eqn: b-bound}
\| B \| \leq  \gamma_7.
\end{equation}
\end{prop}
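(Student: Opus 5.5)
The plan is to exploit the rank-one structure of $B$ in \eqref{eqn: rank-1}. Write
\[
B = -\frac{1}{e}\, \pmb{u}\, \widehat{\pmb{z}}_p^{\top}, \qquad \pmb{u} := \frac{\partial \langle \pmb{k}, \widehat{\pmb{X}}_q\rangle}{\partial \widehat{\pmb{z}}_p},
\]
so that the entries factor as $B(\pmb{k},\pmb{k}') = -e^{-1}\,\pmb{u}(\pmb{k})\,\widehat{\pmb{z}}_p(\pmb{k}')^{\top}$. Both estimates then reduce to bounds on the two factors separately. The Gevrey bound on $\widehat{\pmb{z}}_p(\pmb{k}')$ is immediate from the hypothesis $\widehat{\pmb{z}} \in \mathcal{K}(s)$: we have $\|\widehat{\pmb{z}}_p(\pmb{k}')\| \le \exp\{-|\pmb{k}'|^s\}$.

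The main work is the decay of $\pmb{u}(\pmb{k})$. Its entries are linear combinations, with weights given by the integer vectors $\pmb{e}_j$ coming from the resonant indices, of the rows of $\partial \widehat{X}_j(\pmb{e}_j)/\partial \widehat{\pmb{z}}(\pmb{k})$. By \eqref{eqn: tangent-linear-operator} these rows equal $\mathcal{H}_1(\pmb{e}_j - \pmb{k}) + \mathcal{H}_2(\pmb{e}_j + \pmb{k})$. Applying \Cref{prop: convolution} (more precisely, the Gevrey decay of the kernels $\mathcal{H}_1,\mathcal{H}_2$ proved there via convolution), each term is bounded by a constant times $\exp\{-|\pmb{k} \mp \pmb{e}_j|^s\}$. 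For $0<s\le 1$ the map $x\mapsto x^s$ is subadditive, so $|\pmb{k}|^s \le |\pmb{k}\mp\pmb{e}_j|^s + 1$. Hence
\[
\exp\{-|\pmb{k}\mp\pmb{e}_j|^s\} \le e\,\exp\{-|\pmb{k}|^s\}.
\]
Summing over the finitely many $j$, with the weight $|\pmb{e}_j|=1$, gives $\|\pmb{u}(\pmb{k})\| \le C(n,\gamma_3,\gamma_4)\exp\{-|\pmb{k}|^s\}$. Multiplying the two factor bounds yields \eqref{eqn: b-gevrey} with $\gamma_6 = C$; the factor $e$ lost in the shift is absorbed by the prefactor $1/e$. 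The supremum over $\pmb{k},\pmb{k}'$ is understood over the non-resonant indices.

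For the norm bound \eqref{eqn: b-bound}, I will use that a rank-one operator satisfies $\|B\| = e^{-1}\|\pmb{u}\|_2\,\|\widehat{\pmb{z}}_p\|_2$. The factor $\|\widehat{\pmb{z}}_p\|_2$ is controlled by $\sum_{\pmb{k}}\exp\{-2|\pmb{k}|^s\}$, which is finite for every $s>0$ and depends only on $n$ and $s$. The factor $\|\pmb{u}\|_2$ is bounded either by the same summation, or directly by \eqref{eqn: hessian-bound}, since $\pmb{u}$ is a finite combination of rows of $\partial\widehat{\pmb{X}}_q/\partial\widehat{\pmb{z}}_p$, giving $\|\pmb{u}\|\le n\gamma_5$.

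The composite norm also includes $\partial_{\pmb{\omega}}$ derivatives. By the product rule, $\partial_{\pmb{\omega}} B$ splits into two rank-one terms, each of which is estimated in the same way. This uses the $\partial_{\pmb{\omega}}$ part of the $\mathcal{K}(s)$ membership for $\widehat{\pmb{z}}_p$, and for $\pmb{u}$ the corresponding part of \Cref{prop: convolution}, which is the chain rule through the polynomial $H_1$.

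The main obstacle is the shift step, i.e.\ converting decay in $\pmb{k}\mp\pmb{e}_j$ into decay in $\pmb{k}$ uniformly in $s$. The subadditivity argument handles it with only a loss of the factor $e$. The remaining concern is that the summation constant $\sum_{\pmb{k}}\exp\{-2|\pmb{k}|^s\}$ blows up as $s\to0$. This is why $\gamma_7$ must be allowed to depend on $s$, which the statement permits.
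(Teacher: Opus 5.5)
Your strategy is the same as the paper's: factor the entries of $B$, bound each factor by Gevrey decay, then sum to get the operator norm. However, you have misread the operator, and the misreading removes the only delicate point. The $\pmb{k}$ in $\langle \pmb{k}, \widehat{\pmb{X}}_q\rangle$ in \eqref{eqn: rank-1} is not a resonant index $\pmb{e}_j$ of unit length. $B$ accounts for the dependence of the term $-\langle \pmb{k}, \pmb{\omega}'\rangle \widehat{\pmb{z}}_p$ of $F$ on $\widehat{\pmb{z}}_p$ through the frequency fixed by the $Q$-equations. So $\pmb{k}$ is the unbounded lattice index labelling the factor $\widehat{\pmb{z}}_p(\pmb{k})$. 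This is exactly how the paper's proof reads it:
\[
\| B(\pmb{k}, \pmb{k}') \| \leq \frac{1}{e}\, \| \pmb{k}\, \widehat{\pmb{z}}_p(\pmb{k}) \| \cdot \left\| \frac{\partial \widehat{\pmb{X}}_q}{\partial \widehat{\pmb{z}}_p(\pmb{k}')} \right\|.
\]
With the weight restored, your factor bound $\|\widehat{\pmb{z}}_p(\pmb{k})\| \le \exp\{-|\pmb{k}|^s\}$ only gives $\|B(\pmb{k},\pmb{k}')\| \exp\{|\pmb{k}|^s + |\pmb{k}'|^s\} \lesssim |\pmb{k}|$. The hypothesis $\widehat{\pmb{z}} \in \mathcal{K}(s)$ alone does not keep this bounded in general, for example if $\|\widehat{\pmb{z}}(\pmb{k})\|$ saturates $\exp\{-|\pmb{k}|^s\}$ along a ray. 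So the real obstacle is this polynomial weight, not the shift by $\pmb{e}_j$. Your subadditivity argument handles the shift correctly, and the paper skips that step.

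To get a Gevrey-type entry bound you must give up part of the exponent. For example, $|\pmb{k}| \exp\{-|\pmb{k}|^s\} \le C_s \exp\{-\tfrac12 |\pmb{k}|^s\}$ with $C_s = (2/s)^{1/s} e^{-1/s}$, which gives \eqref{eqn: b-gevrey} with $\tfrac12|\pmb{k}|^s$ in place of $|\pmb{k}|^s$. Alternatively, use the extra slack the actual iterates have in \eqref{eqn: vector-decay-r}. The paper's own proof writes down the factor $\|\pmb{k}\,\widehat{\pmb{z}}_p(\pmb{k})\|$ but does not address this loss either.

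The norm bound \eqref{eqn: b-bound} survives with a small change. $B$ is a sum of $n$ rank-one operators, the $i$-th having one factor $k_i\, \widehat{\pmb{z}}_p(\pmb{k})$, and $\sum_{\pmb{k}} |\pmb{k}|^2 \exp\{-2|\pmb{k}|^s\} < \infty$ for every $s>0$. With that change, your rank-one norm identity goes through unchanged; it is sharper than the paper's $\ell^1 \times \ell^1$ summation. Your product-rule treatment of the $\partial_{\pmb{\omega}}$ part also goes through.
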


The proof is provided in~\Cref{subsec: b-operator}. Finally, we bound the second-order derivative of the vector field generated by the perturbation $H_1$, which constitutes a third-order tensor. 
\begin{prop}
\label{prop: tensor}
Assume $\widehat{\pmb{z}} \in \mathcal{K}(s)$. There exist a constant $\gamma_8 = \gamma_8(H_1, s)$ such that 
\begin{equation}
\label{eqn: tensor}
\max \left\{ \left\| \frac{\partial^2 \widehat{\pmb{X}}_q}{\partial \widehat{\pmb{z}}_p^2}  \right\|, \left\| \frac{\partial S}{\partial \widehat{\pmb{z}}_p } \right \| \right\} \leq  \left\| \frac{\partial^2 \widehat{\pmb{X}}}{\partial \widehat{\pmb{z}}^2}  \right\| \leq \gamma_8. 
\end{equation}
\end{prop}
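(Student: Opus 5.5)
The plan follows the proofs of \Cref{prop: vector-field} and \Cref{prop: convolution}, one derivative higher. Since $H_1$ is a real polynomial of finite degree $d := \max\{|\pmb{\alpha}|+|\pmb{\beta}|\}$, the vector field $\pmb{X} = \partial H_1/\partial \overline{\pmb{z}}$ is a polynomial of degree $d-1$. Its second derivative $\partial^2 \pmb{X}/\partial \pmb{z}^2$, taken in the $\pmb{z}$ and $\overline{\pmb{z}}$ directions, is a polynomial of degree $d-3 \geq 0$. Each of its components is a finite sum of monomials $\pmb{z}^{\pmb{\alpha}'}\overline{\pmb{z}}^{\pmb{\beta}'}$ with $|\pmb{\alpha}'|+|\pmb{\beta}'| \leq d-3$, with coefficients bounded by a constant depending only on $H_1$.

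In Fourier variables, the kernel of the third-order tensor $\partial^2 \widehat{\pmb{X}}(\pmb{k})/\partial\widehat{\pmb{z}}(\pmb{k}')\partial\widehat{\pmb{z}}(\pmb{k}'')$ is a finite sum of Fourier coefficients of these lower-degree polynomials. They are evaluated at the combinations $\pmb{k} \pm \pmb{k}' \pm \pmb{k}''$, which mirrors the Toeplitz/Hankel splitting $\mathcal{H}_1(\pmb{k}-\pmb{k}') + \mathcal{H}_2(\pmb{k}+\pmb{k}')$ in~\eqref{eqn: tangent-linear-operator}. The reality reduction $\widehat{\pmb{z}}(\pmb{k}) = \widehat{\pmb{z}}(-\pmb{k})$ lets us treat all sign combinations uniformly.

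\textbf{Step 1: Gevrey decay of the kernel.} Each Fourier coefficient is a convolution of at most $d-3$ copies of $\widehat{\pmb{z}} \in \mathcal{K}(s)$. I would apply the convolution-preserves-Gevrey-decay lemma from \Cref{subsec: vector-field}. It rests on the subadditivity $|\pmb{k}_1+\pmb{k}_2|^s \leq |\pmb{k}_1|^s + |\pmb{k}_2|^s$ for $s \in (0,1]$, together with summability of $\sum_{\pmb{k}} e^{-c|\pmb{k}|^s}$. This shows that each kernel $G(\pmb{m})$ satisfies $\|G(\pmb{m})\| \leq C(H_1,s)\, e^{-|\pmb{m}|^s/2}$ or a comparable bound. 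When $d = 3$ the tensor is constant and only finitely many lattice shifts occur, so the bound is trivial.

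\textbf{Step 2: operator norm of the trilinear form.} Pairing against $u, v, w \in \ell_2$ gives a sum of the form
\[
\sum_{\pmb{k},\pmb{k}',\pmb{k}''} |G(\pmb{k}\pm\pmb{k}'\pm\pmb{k}'')|\,|u(\pmb{k})|\,|v(\pmb{k}')|\,|w(\pmb{k}'')|.
\]
Since $G \in \ell_1(\mathbb{Z}^n)$ by Step 1, Young's convolution inequality bounds this sum by $\|G\|_{\ell_1}\|u\|_2\|v\|_2\|w\|_2$. Applying Cauchy--Schwarz in one variable and Young in the remaining pair achieves the same bound. This gives the $\ell_2$ part of the composite norm.

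For the $\partial_{\pmb{\omega}}$ part, differentiating the kernel in $\pmb{\omega}$ hits exactly one of the $\widehat{\pmb{z}}$ factors by the Leibniz rule. Since $\|\partial_{\pmb{\omega}}\widehat{\pmb{z}}(\pmb{k})\|$ also carries the weight $e^{-|\pmb{k}|^s}$ by the definition of $\mathcal{K}(s)$ with the composite norm, the same argument applies with a factor of at most $d-3$. This yields $\|\partial^2\widehat{\pmb{X}}/\partial\widehat{\pmb{z}}^2\| \leq \gamma_8(H_1,s)$.

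\textbf{Step 3: restriction to sub-blocks.} $\partial^2\widehat{\pmb{X}}_q/\partial\widehat{\pmb{z}}_p^2$ and $\partial S/\partial\widehat{\pmb{z}}_p$ are both obtained from the full tensor by composing with the coordinate projections onto the resonant and non-resonant index sets, as in~\eqref{eqn: vector-field-decompostion}. These projections have norm $1$, so both norms are at most the full norm, giving the left inequality in~\eqref{eqn: tensor}.

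The main obstacle is Step 2. The paper's spectral-norm convention for symmetric operators has no direct tensor analogue, so I would fix the multilinear operator norm $\sup_{\|u\|=\|v\|=1}\|T[u,v]\|$. The point to check is that the summable kernel really controls the trilinear form uniformly: the constant must not depend on the truncation $N$, and $\gamma_8$ may blow up as $s \to 0$. This is consistent with the stated dependence $\gamma_8 = \gamma_8(H_1,s)$.
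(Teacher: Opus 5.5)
Your plan is the paper's appendix argument step for step: split the kernel into the pieces $\mathcal{T}_{ij}(\pmb{k}\pm\pmb{k}'\pm\pmb{k}'')$, get Gevrey (hence $\ell^1$) decay from the convolution lemma, then bound the bilinear map by the $\ell^1$ norm of the kernel. Steps 1 and 3 are fine. Step 2, however, is false, and the paper's version of it fails for the same reason.

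After Cauchy--Schwarz in $u$ and Young for $G$, you still need $\|\,|v|*|w|\,\|_{\ell^2}\le\|v\|_{\ell^2}\|w\|_{\ell^2}$. Young's inequality with two $\ell^2$ factors only lands in $\ell^\infty$. The true bound is $\|v*w\|_{\ell^2}\le\|v\|_{\ell^1}\|w\|_{\ell^2}$; equivalently, a product of two $L^2(\mathbb{T}^n)$ functions is only in $L^1$.

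The case $d=3$, which you call trivial, is exactly the counterexample. Take $H_1=(z_1+\overline{z}_1)^3$. The tensor is constant, and on real even Fourier vectors the first component of $\frac{\partial^2\widehat{\pmb{X}}}{\partial\widehat{\pmb{z}}^2}[\widehat v,\widehat w]$ is a nonzero constant multiple of $\widehat v_1*\widehat w_1$. Choose $\widehat v_1=\widehat w_1=|\Lambda_L|^{-1/2}\mathbf{1}_{\Lambda_L}$, which has unit $\ell^2$ norm. Then $(\widehat v_1*\widehat w_1)(\pmb k)\ge 2^{-n}$ on $\Lambda_L$, so the output has $\ell^2$ norm at least a constant times $(2L+1)^{n/2}\to\infty$.

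So with the multilinear $\ell^2$ operator norm you fixed, no $\gamma_8$ independent of the truncation exists. The worry in your last paragraph is a real obstruction, not a detail to check. The paper's appendix makes the same move when it bounds $\sum_{\pmb k}\|\sum_{\pmb k'}\mathcal{T}_{11}(\pmb k-\pmb k')\widehat{\pmb z}_1(\pmb k')\|$ by $\big(\sum_{\pmb k}\|\mathcal{T}_{11}(\pmb k)\|\big)\|\widehat{\pmb z}_1\|_2$. That inequality would need $\|\widehat{\pmb z}_1\|_{\ell^1}$ on the right.

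What your argument does prove is the mixed estimate $\big\|\frac{\partial^2\widehat{\pmb X}}{\partial\widehat{\pmb z}^2}[\widehat v,\widehat w]\big\|_2\le C(H_1,s)\|\widehat v\|_{\ell^1}\|\widehat w\|_{\ell^2}$, and likewise for the $\partial_{\pmb\omega}$ part. To get something usable you must change the statement in one of two ways.

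\textbf{Option 1.} Restrict to vectors supported in $\Lambda_N$ and replace $\gamma_8$ by $C(H_1,s)(2N+1)^{n/2}$, using $\|\widehat v\|_{\ell^1}\le(2N+1)^{n/2}\|\widehat v\|_{\ell^2}$. You must then check that this polynomial loss is absorbed where the bound is used, namely the Taylor remainder in \Cref{lem: iteration}, evaluated at $\Delta^{(r)}$ supported in $\Lambda_{N_{r+1}}$.

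\textbf{Option 2.} Work in a norm that is a Banach algebra under convolution, such as $\|\widehat{\pmb z}\|_{\sharp}:=\big(\sum_{\pmb k}\|\widehat{\pmb z}(\pmb k)\|^2e^{2|\pmb k|^{s'}}\big)^{1/2}$ with $0<s'<s$. By \Cref{lemma: App-1}, $\sup_{\pmb k}\sum_{\pmb k'}\exp\{2(|\pmb k|^{s'}-|\pmb k-\pmb k'|^{s'}-|\pmb k'|^{s'})\}<\infty$, which gives $\|\widehat v*\widehat w\|_{\sharp}\le C\|\widehat v\|_{\sharp}\|\widehat w\|_{\sharp}$. Your Step 1 decay makes the kernel summable against the submultiplicative weight $e^{|\pmb k|^{s'}}$. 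With these two facts, your Step 2 goes through in $\|\cdot\|_{\sharp}$.
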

The proof of~\Cref{prop: tensor} is provided in~\Cref{subsec: tensor}. Let $\gamma = \max_{1 \leq j \leq 8} \gamma_j$, where each $\gamma_j$ for $j=1, \ldots, 8$ depends on the perturbation Hamiltonian $H_1$. By scaling $H_1$, or, equivalently, by choosing a sufficiently small $\varepsilon$), we assume without loss of generality that $\gamma  \leq 1/(2e) \leq 1/2$. Utilizing the estimates from~\Cref{prop: vector-field}, we establish the following result regarding the frequency drift.
\begin{prop}
\label{prop: frequency-drift}
Assume $\widehat{\pmb{z}} \in \mathcal{K}(s)$. The frequency drift determined by the $Q$-equation~\eqref{eqn: q-eqn-vector} satisfies: 
\begin{equation}
\label{eqn: frequency-drift-bound}
\left|  \pmb{\omega}' -\pmb{\omega} \right| \leq  \varepsilon.
\end{equation}
and the Jacobian of the frequency map satisfies: 
\begin{equation}
\label{eqn: frequency-drift-derivative}
1 - \varepsilon \leq \left\| \frac{\partial \pmb{\omega}'}{\partial \pmb{\omega}} \right\|_2 \leq 1 +  \varepsilon.
\end{equation}
\end{prop}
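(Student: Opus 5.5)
The plan is to read the frequency map directly off the $Q$-equations~\eqref{eqn: q-eqn-vector}. Componentwise, since $a_j = e^{-1}\neq 0$, these give the explicit relation
\[
\omega'_j = \omega_j + \varepsilon\, \frac{\widehat{X}_j(\pmb{e}_j)}{a_j} = \omega_j + \varepsilon e\, \widehat{X}_j(\pmb{e}_j), \qquad j=1,\ldots,n,
\]
so that $\pmb{\omega}' - \pmb{\omega} = \varepsilon e\, \widehat{\pmb{X}}_q$. Both assertions then reduce to bounds on $\widehat{\pmb{X}}_q$ and its $\pmb{\omega}$-derivative, which are supplied by~\Cref{prop: vector-field} together with the normalization $\gamma \le 1/(2e)$.

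\textbf{Drift bound.} Since $\widehat{\pmb{z}} \in \mathcal{K}(s)$, \eqref{eqn: vector-field-bound} gives $\|\widehat{\pmb{X}}_q\| \le \gamma_2 \le \gamma \le 1/(2e)$. Hence $|\pmb{\omega}'-\pmb{\omega}| \le \varepsilon e \cdot 1/(2e) = \varepsilon/2 \le \varepsilon$.

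One care point concerns norms. The statement uses the $\ell^1$-norm $|\cdot|$ on $\mathbb{R}^n$, while~\Cref{prop: vector-field} controls the composite norm $\|\cdot\|_2 + \|\partial_{\pmb{\omega}}\cdot\|_2$. Passing from $\ell^2$ to $\ell^1$ costs a factor $\sqrt n$. I would absorb this into the slack factor $1/2$, or equivalently into the choice of $\gamma$ by rescaling $H_1$ as the paper already permits.

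\textbf{Jacobian bound.} I would differentiate the relation above in $\pmb{\omega}$:
\[
\frac{\partial \pmb{\omega}'}{\partial \pmb{\omega}} = I + \varepsilon e\, \partial_{\pmb{\omega}} \widehat{\pmb{X}}_q .
\]
The key observation is that the composite norm in $\mathscr{H}(\mathbb{Z}^n)$ already contains $\|\partial_{\pmb{\omega}}\cdot\|_2$. Therefore~\eqref{eqn: vector-field-bound} also yields $\|\partial_{\pmb{\omega}}\widehat{\pmb{X}}_q\|_2 \le \gamma_2 \le 1/(2e)$, and the perturbation term has operator norm at most $\varepsilon/2$. The triangle inequality, together with its reverse form $\|I+E\|_2 \ge 1-\|E\|_2$, then gives
\[
1-\varepsilon \le \left\|\frac{\partial \pmb{\omega}'}{\partial \pmb{\omega}}\right\|_2 \le 1+\varepsilon .
\]
I take this reading of the norm on $\partial_{\pmb{\omega}}\widehat{\pmb{X}}_q$ as given. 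If it is not accepted, the fallback is the chain rule
\[
\partial_{\pmb{\omega}} \widehat{\pmb{X}}_q = \frac{\partial \widehat{\pmb{X}}_q}{\partial \widehat{\pmb{z}}_p}\, \partial_{\pmb{\omega}}\widehat{\pmb{z}}_p .
\]
Bounding the first factor by $\gamma_5$ via~\Cref{prop: convolution} and the second by $1$ (membership in $\mathcal{K}(s)$ controls the composite norm) gives the same estimate.

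\textbf{Main obstacle.} The main difficulty is conceptual rather than computational: the $\pmb{\omega}$-derivative of $\widehat{\pmb{X}}_q$ must be interpreted consistently with the composite norm, and there is an implicit dependence, since $\widehat{\pmb{z}}$ itself depends on $\pmb{\omega}$ and thus $\pmb{\omega}'$ is a function of $\pmb{\omega}$ through $\widehat{\pmb{z}}$. I would state explicitly that $\partial_{\pmb{\omega}}$ denotes the total derivative along the family $\widehat{\pmb{z}}(\pmb{\omega}) \in \mathcal{K}(s)$. With that convention, the argument above is just the composite-norm bound of~\Cref{prop: vector-field} applied twice, with no fixed-point or implicit-function argument needed.
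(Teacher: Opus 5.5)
Your proposal is correct and matches the argument the paper has in mind; the paper gives no separate proof, only the pointer to \Cref{prop: vector-field} and the normalization $\gamma\le 1/(2e)$. You solve the $Q$-equations as $\pmb{\omega}'-\pmb{\omega}=\varepsilon e\,\widehat{\pmb{X}}_q$ and read both the value and the $\pmb{\omega}$-derivative bound off the composite norm, which gives $\varepsilon/2\le\varepsilon$ for the drift and $\|\varepsilon e\,\partial_{\pmb{\omega}}\widehat{\pmb{X}}_q\|_2\le\varepsilon/2$ for the Jacobian. One small correction, affecting only your fallback: membership in $\mathcal{K}(s)$ bounds $\|\partial_{\pmb{\omega}}\widehat{\pmb{z}}_p\|_2$ by $(\sum_{\pmb{k}}e^{-2|\pmb{k}|^s})^{1/2}$ rather than by $1$, so that constant, like your $\sqrt{n}$, must also be absorbed into the rescaling of $\gamma$.
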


\subsection{Conditions for efficient implementations}
\label{subsec: implementation-condition}

The heart of the alternating numerical procedure, as shown in~\Cref{fig: algorithm-nm}, lies in the update of the non-resonant vector via the dimension-enlarged Newton scheme~\eqref{eqn: p-eqn-numerical}. The major challenge in this approach is the ``small-divisor'' problem, which implies that the truncated matrix $(T + \varepsilon B)_{N}$ may become increasingly singular as the dimension $N$ grows. To ensure an effective implementation, we must prevent the norm of the inverse truncation matrix from growing excessively. To quantify the allowable singularity and the required numerical precision, we define a size-dependent threshold parameter:
\begin{equation}
\label{eqn: epsilon-N}
\varepsilon_N : = \exp\left\{ - (\log N)^{15} \right\},
\end{equation} 
which serves as the error threshold for each truncation level $N$. In numerical practice, we evaluate the perturbed operator $T + \varepsilon B$ rather than $T$ alone. The validity of the implementation depends on the following two conditions.
\begin{tcolorbox}[breakable]
\begin{condition}[Implementation Conditions]
\label{cond: implementation}
The truncated linearized operator $(T + \varepsilon B)_{N}$ must satisfy the following two conditions:
\begin{itemize}
\item[(1)] \textbf{Inversion condition}: The operator norm of the inverse is bounded by the reciprocal of the size parameter:
\begin{equation}
\label{eqn: inverse-grow}
\| (T + \varepsilon B)_{N}^{-1} \|_2 \leq \frac{1}{\varepsilon_{N}}.
\end{equation}
\item[(2)] \textbf{Localization condition}: For sufficiently large spatial separations in the lattice $\Lambda_N$, specifically when $|\pmb{k} - \pmb{k}'| \geq N^{\frac12}$ and $|\pmb{k} + \pmb{k}'| \geq N^{\frac12}$ (denoted by $|\pmb{k} \pm \pmb{k}'| \geq N^{\frac12}$ for short), the entries of the inverse matrix must exhibit Gevrey-type decay:
\begin{equation}
\label{eqn: off-diagonal}
| (T + \varepsilon B)_{N}^{-1}\left( \pmb{k}, \pmb{k}' \right) | \leq \exp\left\{ - \frac{|\pmb{k} - \pmb{k}'|^{s}}{2}\right\} +  \exp\left\{ - \frac{|\pmb{k} + \pmb{k}'|^{s}}{2}\right\}.
\end{equation}
\end{itemize}
\end{condition}
\end{tcolorbox}

Regarding the inversion condition in~\eqref{eqn: inverse-grow}, we emphasize that all estimates are conducted using the $\ell_2$-norm. The localization condition~\eqref{eqn: off-diagonal} specified here refers to two-sided off-diagonal decay for lattice boxes centered at the origin $\mathbf{0}$. For ``off-side" boxes of size $N$ with centers $\pmb{k}_0 \notin \Lambda_{2N}$, the distance $|\pmb{k} + \pmb{k}'| \geq 2N$ is always satisfied; consequently, we only need to consider the off-diagonal decay in terms of $|\pmb{k} - \pmb{k}'|$. The analysis presented in \Cref{sec: small} and \Cref{sec: multiscale} focuses exclusively on the $\ell_2$-norm of the inverse and the localization of its entries for the operator $(T+\varepsilon B)$ restricted to off-side boxes. According, we postpone the detailed treatment of two-sided off-diagonal decay until~\Cref{subsec: verification}. Furthermore, as these objectives do not require estimating the norm of the derivatives, the composite norm is unnecessary for these sections. We reserve its use for derivative estimates of the linear operator $(T + \varepsilon B)_N$, which are essential only for the Iteration Lemma (\Cref{lem: iteration}). For clarity and conciseness, the formal discussion of the composite norm is deferred to \Cref{lem: inverse-derivative} in \Cref{subsec: induction}. Finally, note that the inclusion of the operator $B$ in the numerical implementation renders the operator non-self-adjoint; therefore, the $\ell_2$-norm here corresponds to the largest singular value.

\subsection{Outline of the inductive and multi-scale strategy}
\label{subsec: organization} 

The proof of the main convergence statement~(\Cref{thm: main}) follows a structural and multi-scale hierarchy. While the conceptual core of our approach remains rooted in the framework established by~\citet{bourgain1998quasi}, our approach departs significantly in both presentation and execution. Guided by~\Cref{cond: implementation}, we provide a more modular and transparent strategy as outlined below.

\paragraph{Small-scale preparation (\Cref{sec: small})}  We begin by establishing foundational results for small-scale boxes to anchor the subsequent multi-scale analysis. Departing from the high-level techniques utilized by~\citet{bourgain1998quasi}, we propose a significantly streamlined argument based on the Neumann series. Additionally, we clarify the relationship between the nearly-resonant set and the Diophantine condition common in classical KAM theory.

\paragraph{Multi-scale analysis (\Cref{sec: multiscale})} This section streamlines the multi-scale analysis of~\citet{bourgain1998quasi}, introducing a more efficient execution divided into two key components:

\begin{itemize}
\item Clustering of singular boxes (\Cref{subsec: size-reduce-cluster}).  We demonstrate how the second Melnikov condition effectively controls the diffusion of resonant clusters. Our simplified approach provides a more transparent bound on the complexity of these clusters. 
\item Multi-scale induction (\Cref{subsec: inversion-localization}). Using the small-box results as inductive bases, we employ the resolvent identity to derive a clear inductive step. We show that as long as the inversion condition~\eqref{eqn: inverse-grow} holds, the localization condition~\eqref{eqn: off-diagonal} is naturally satisfied. This refinement reconstructs the logical progression explicitly, bypassing the dense, non-linear algebraic burdens typically associated with inductive resolvent bounds~\citep{bourgain1998quasi}.
\end{itemize}
Furthermore, we demonstrate that the second Melnikov condition is not omitted rather localized. In finite dimensions, this corresponds directly to the classical the second Melnikov condition. This localized implementation is vital for PDE applications, such as the wave equation, which we explore in our forthcoming work.

\paragraph{Inductive Iteration and Completion (\Cref{sec: completion})} The formal verification for~\Cref{cond: implementation} is conducted in~\Cref{subsec: verification}, with specific focus on two-sided off-diagonal decay and the exclusion of ``bad" frequencies. We restrict the linear operator to the lattice boxes with centers at the origin and compute its inverse as the dimension is incrementally expanded toward infinity. In~\Cref{subsec: induction}, we establish~\Cref{lem: iteration} via rigorous inductive estimates, which serves as the final bridge to complete the convergence proof for our numerical algorithms.

\paragraph{Numerical experiments (\Cref{sec: numer})} Finally, we demonstrate the efficiency of our approach using the Duffing equation and the Hénon-Heiles model. We highlight two key advantages over traditional methods. Our method provides precise approximations of frequencies and Fourier coefficients where standard symplectic integrators reach their limits. While symplectic algorithms conserve the symplectic form, they often fail to track precise pointwise positions over long time scales due to phase drift. Our multi-scale inductive approach ensures both structural stability and superior accuracy in predicting the exact state of the system at any given time.

\section{Restricted operators on small boxes}
\label{sec: small}

From this section onward, we analyze the restricted operator $(T+\varepsilon B)_{N}$ as it arises within the dimension-enlarged Newton scheme~\eqref{eqn: p-eqn-numerical}. Specifically, our main objective is to establish the conditions under which this operator satisfies~\Cref{cond: implementation}. In the iterative scheme~\eqref{eqn: p-eqn-numerical}, the reference box size at the $r$-th step is set as $N_{r+1} = M^{r+1}$. To establish the benchmark for scale separation, we determine the specific value of $M$ relative to the perturbation parameter $\varepsilon$ as:
\begin{equation}
\label{eqn: M}
M = \exp\left\{ \left( \log \frac{1}{\varepsilon} \right)^{\frac{1}{20}} \right\}
\end{equation}
Based on the scaling factor $M$, we categorize the restricted lattice boxes defined in~\eqref{eqn: box-fourier-truncated} into two distinct regimes:
\begin{itemize}
\item \textbf{Small restricted boxes}. The box size $N$ falls within the range
\begin{equation}
\label{eqn: small-size}
M_0= \exp \left\{ \left( \log M \right)^{\frac{1}{20}} \right\} \leq N \leq M.
\end{equation}
In this regime, the dynamics are characterized by low-frequency oscillations or slow rotations.

\item \textbf{Large restricted boxes} The box size satisfies 
\begin{equation}
\label{eqn: large-size}
N > M,
\end{equation}
where the dynamics here involves high-frequency oscillation or rapid rotations.
\end{itemize}

This section focuses exclusively on small restricted boxes. The analysis of large restricted boxes involves a more complex multi-scale approach, which is deferred to~\Cref{sec: multiscale}.

\subsection{Nearly-resonance sets}
\label{subsec: nearly-resonance-sets}

The presence of ``bad'' frequencies typically gives rise to small-divisor problems, which may render the restricted operator non-invertiable. To characterize such frequencies explicitly, we introduce the nearly-resonant set associated with a finite lattice box $\Lambda_{2(N+1)}$.
\begin{defn}
\label{defn: nearly-resonant}
Let $\Omega \subseteq \mathbb{R}^{n}$ be a bounded domain. The \textit{nearly-resonant set} associated with the restricted box $\Lambda_{2(M+1)}$ is defined as
\begin{equation}
\label{eqn: nearly-resonance}
\Omega_{M}^{\tau} = \left\{ \pmb{\omega} \in \Omega \bigg | \left| \langle \pmb{k}, \pmb{\omega} \rangle\right| < \frac{1}{ | \pmb{k} |^{\tau}}, \; \pmb{k} \in \Lambda_{2(M+1)}\setminus\{0\} \right\}. 
\end{equation}
\end{defn}

The complement, $\Omega \setminus\Omega_{M}^{\tau}$, corresponds to the classical Diophantine condition used in KAM theory (see, e.g.,~\citet{arnold2006mathematical}). Here, it serves to exclude ``bad'' frequencies within the small boxes.  In the case of lower-dimensional quasi-periodic solutions, the nearly-resonant set naturally decomposes into two components corresponding to the first and second Melnikov conditions: the former, associated with the original small box, identifies ``bad'' frequencies that cause the operator's eigenvalues to vanish (see~\Cref{subsec: original-small}), and the latter, arising from small translated boxes, addresses ``difference resonances''. In this scenario, even if a singular eigenvalue may exist, all other eigenvalues of the translated block remain well-behaved. Consequently, if we can ensure that the singular eigenvalue remains ``good'', then the inverse and its off-digonal entries satisfy~\Cref{cond: implementation} (see~\Cref{subsec: translated-small}). Further results concerning lower-dimensional quasi-periodic solutions will be presented in forthcoming work.  The following lemma summarizes the measure estimates for these excluded (small-divisor)  frequency sets.  

\begin{lemma}
\label{lem: mes-dig}
Let $\tau > n - 1$ be a fixed exponent. Then there exist a constant $\kappa = \kappa(n, \tau, \Omega) > 0$, depending onthe dimension $n$, the domian $\Omega$,  and the parameter $\tau$, such that the  nearly-resonance set satisfies
\begin{equation}
\label{eqn: small-mes}
\mathrm{mes}(\Omega^{\tau}_{M}) \leq \kappa. 
\end{equation}
\end{lemma}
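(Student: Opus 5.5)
We read the definition of $\Omega_M^\tau$ as the union over $\pmb{k} \in \Lambda_{2(M+1)}\setminus\{0\}$ of the slabs
\[
R_{\pmb{k}} := \left\{ \pmb{\omega} \in \Omega : |\langle \pmb{k}, \pmb{\omega}\rangle| < |\pmb{k}|^{-\tau} \right\},
\]
since a frequency is ``bad'' as soon as a single lattice vector produces a small divisor. The plan is to bound $\mathrm{mes}(R_{\pmb{k}})$ for each $\pmb{k}$ and then sum over $\pmb{k}$, using countable subadditivity. The resulting bound will be uniform in $M$, because the sum can be extended to all of $\mathbb{Z}^n\setminus\{0\}$.

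For a fixed $\pmb{k} \neq 0$, the set $R_{\pmb{k}}$ is the intersection of $\Omega$ with a slab orthogonal to $\pmb{k}$. Its width, measured in the direction $\pmb{k}/\|\pmb{k}\|_2$, is $2|\pmb{k}|^{-\tau}/\|\pmb{k}\|_2$. Since $\Omega$ is bounded, it lies in a ball of some diameter $D = D(\Omega)$. Fubini's theorem, applied in coordinates adapted to $\pmb{k}$, then gives
\[
\mathrm{mes}(R_{\pmb{k}}) \leq \frac{2 D^{n-1}}{\|\pmb{k}\|_2\, |\pmb{k}|^{\tau}} \leq \frac{2\sqrt{n}\, D^{n-1}}{|\pmb{k}|^{\tau+1}}.
\]
In the second step we used $\|\pmb{k}\|_2 \geq |\pmb{k}|/\sqrt{n}$, where $|\cdot|$ is the $\ell^1$-norm.

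We now sum over $\pmb{k}$. The number of $\pmb{k} \in \mathbb{Z}^n$ with $|\pmb{k}| = m$ is at most $C_n m^{n-1}$. Hence
\[
\mathrm{mes}(\Omega_M^\tau) \leq \sum_{\pmb{k} \neq 0} \frac{2\sqrt{n} D^{n-1}}{|\pmb{k}|^{\tau+1}} \leq 2\sqrt{n} D^{n-1} C_n \sum_{m\geq 1} m^{n-2-\tau}.
\]
The last series converges exactly when $\tau > n-1$, which is the hypothesis of the lemma. This gives the constant $\kappa(n,\tau,\Omega)$, independent of $M$ and $\varepsilon$.

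The main subtle step is the slab estimate. It needs $\Omega$ to be bounded, which is assumed, and it needs the width to be computed with the Euclidean norm of $\pmb{k}$, not the $\ell^1$-norm. Getting the $\ell^1$/$\ell^2$ comparison right is what yields the extra power $|\pmb{k}|^{-1}$, and with it the threshold $\tau > n-1$ rather than $\tau > n$.

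If a genuinely \emph{small} $\kappa$ is wanted, we would replace the right-hand side $|\pmb{k}|^{-\tau}$ by $\eta |\pmb{k}|^{-\tau}$. The same computation then gives $\kappa = O(\eta)$. As the lemma is stated, however, only finiteness of $\kappa$ is asserted, and the argument above suffices.
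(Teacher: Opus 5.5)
Your argument is correct and matches the paper's proof: a union bound over $\pmb{k}\neq 0$, the slab estimate $\mathrm{mes}\le(\text{width})\cdot D^{n-1}$ with width $2|\pmb{k}|^{-\tau}/\|\pmb{k}\|_2$, and convergence of the resulting lattice series exactly when $\tau>n-1$. The only cosmetic difference is that the paper stays with the Euclidean norm, using $|\pmb{k}|\ge\|\pmb{k}\|_2$ to bound the width by $2\|\pmb{k}\|_2^{-(\tau+1)}$, whereas you pass to the $\ell^1$-norm and count lattice shells.
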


The proof relies only on elementary measure estimates and the convergence of a simple lattice sum, and is therefore deferred to~\Cref{sec: mes-dig}.  
\begin{remark}
\label{rem: num-irrational}

Beyond the analysis of the small boxes, the Melnikov conditions can be interpreted more deeply within the multi-scale analysis framework (see~\Cref{sec: multiscale}): 
\begin{itemize}
\item~\textbf{Single-box dynamics}:  The first Melnikov condition is viewed as a single-box eigenvalue problem. It excludes frequency measures where the restricted operator fails to be ``gap-broadening'' or invertible.
\item~\textbf{Inter-box interactions}:  The second Melnikov condition governs the interaction between two distinct boxes (or box translations). It ensures that resonance does not "propagate" or couple across different spatial locations.
\end{itemize}
A fundamental departure from classical KAM theory, where infinite-dimensional holomorphic equations require both Melnikov conditions to be satisfied simultaneously at every iteration, is the hierarchical application of these conditions. In the multi-scale analysis, based on a finer decomposition of Fourier modes, these conditions are imposed progressively scale by scale. This relaxation constitutes a cornerstone of~\citet{bourgain2005green}, allowing the treatment of higher-dimensional PDEs where frequency clusters would otherwise violate the standard (global) second Melnikov condition. 
\end{remark}

\subsection{Restriction to the original small box}
\label{subsec: original-small}

Excluding the set $\Omega_{M}^{\tau}$ of the ``bad ''frequencies, we now analyze the inverse of the restricted operators within the small-size regime specified in~\eqref{eqn: small-size}. We begin by focusing on the operator restricted to the original box $\Lambda_N$. 
\begin{theorem}
\label{thm: small-size-original}
Assume that $\widehat{\pmb{z}} \in \mathcal{K}(s)$. If the frequency satisfies $\pmb{\omega} \in \Omega \setminus \Omega_{M}^{\tau}$, then the restricted operator $T_N$ is invertible, and its inverse satisfies the operator norm bound
\begin{equation}
\label{eqn: small-size-original-inverse}
\| (T+\varepsilon B)_{N}^{-1} \|_2 \leq \frac{1}{\varepsilon_N}.
\end{equation}
In addition, for any $\pmb{k} \neq \pm  \pmb{k}' \in \Lambda_{N}$, the two-sided off-diagonal entries satisfy the Gevrey decay estimate:
\begin{equation}
\label{eqn: small-size-off-diagnol}
 \left| (T+\varepsilon B)_{N}^{-1} \left(  \pmb{k}, \pmb{k}' \right) \right|  \leq  \frac{1}{2} \left( \exp \left\{-| \pmb{k} - \pmb{k}'|^{s} \right\} +  \exp \left\{-| \pmb{k} + \pmb{k}'|^{s} \right\} \right).
\end{equation} 
\end{theorem}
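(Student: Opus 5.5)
The plan is a Neumann-series perturbation argument around the diagonal part. On the small box $\Lambda_N$ with $M_0 \le N \le M$, write
\[
(T+\varepsilon B)_N = D_N\bigl(I + \varepsilon D_N^{-1}(S+B)_N\bigr),
\]
where $D_N$ is diagonal with entries $D_{j,\pmb{k}} = -\langle \pmb{k},\pmb{\omega}'\rangle + \omega_j$ for $(j,\pmb{k})\notin\mathcal{S}$.

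\emph{Step 1: lower bound on the divisors.} Each divisor can be written as $\langle \pmb{k}-\pmb{e}_j,\pmb{\omega}\rangle - \langle \pmb{k},\pmb{\omega}'-\pmb{\omega}\rangle$. For $(j,\pmb{k})\notin\mathcal{S}$ we have $\pmb{k}-\pmb{e}_j \in \Lambda_{N+1}\setminus\{0\} \subset \Lambda_{2(M+1)}\setminus\{0\}$. Since $\pmb{\omega}\notin\Omega_M^{\tau}$, this gives $|\langle \pmb{k}-\pmb{e}_j,\pmb{\omega}\rangle| \ge |\pmb{k}-\pmb{e}_j|^{-\tau} \ge (n(N+1))^{-\tau}$. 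By \Cref{prop: frequency-drift}, $|\langle \pmb{k},\pmb{\omega}'-\pmb{\omega}\rangle| \le nN\varepsilon$. Because $N\le M = \exp\{(\log\frac1\varepsilon)^{1/20}\}$, both $N^{\tau+1}$ and $N^{\tau}$ are $\varepsilon^{-o(1)}$. Hence $nN\varepsilon$ is negligible, and $|D_{j,\pmb{k}}| \ge \tfrac12 (n(N+1))^{-\tau}$, so $\|D_N^{-1}\|_2 \le 2(n(N+1))^{\tau}$.

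\emph{Step 2: convergence of the Neumann series and the inversion bound.} By \Cref{prop: convolution} and \Cref{prop: B-operator}, $\|(S+B)_N\|_2 \le \gamma_5+\gamma_7 \le 1$. Therefore
\[
\|\varepsilon D_N^{-1}(S+B)_N\|_2 \le 2\varepsilon (n(N+1))^{\tau} \le \varepsilon^{1/2} < \tfrac12
\]
for $\varepsilon\le\varepsilon_0$, using $(\log N)\le(\log\frac1\varepsilon)^{1/20}$ once more. The series $\sum_m (-\varepsilon D_N^{-1}(S+B)_N)^m D_N^{-1}$ then converges to the inverse. It satisfies $\|(T+\varepsilon B)_N^{-1}\|_2 \le 4(n(N+1))^{\tau}$. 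This is far below $1/\varepsilon_N = \exp\{(\log N)^{15}\}$ since $N\ge M_0$ is large, which proves \eqref{eqn: small-size-original-inverse}.

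\emph{Step 3: off-diagonal decay (the main obstacle).} For $\pmb{k}\ne\pm\pmb{k}'$ the diagonal term of the series does not contribute. The $m$-th term is a sum over paths $\pmb{k}=\pmb{k}_0,\pmb{k}_1,\dots,\pmb{k}_m=\pmb{k}'$ of products of $m$ entries of $\varepsilon D^{-1}(S+B)$. Each entry is bounded by $\varepsilon\,2(n(N+1))^{\tau}$ times $\gamma\bigl(e^{-|\pmb{k}_i-\pmb{k}_{i+1}|^s}+e^{-|\pmb{k}_i+\pmb{k}_{i+1}|^s}\bigr)$, using \eqref{eqn: hessian-gevrey} together with \eqref{eqn: b-gevrey}; the latter is even stronger, since $|\pmb{k}|^s+|\pmb{k}'|^s \ge |\pmb{k}\pm\pmb{k}'|^s$.

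The delicate points are two. First, along a path the signs combine, and each step is either a ``difference'' or a ``sum'' step. The composite displacement is then $\pmb{k}\mp\pmb{k}'$ depending on the parity of sum steps, and subadditivity $|a+b|^s\le|a|^s+|b|^s$ (valid for $0<s<1$) converts the product of exponentials into $e^{-|\pmb{k}-\pmb{k}'|^s}$ or $e^{-|\pmb{k}+\pmb{k}'|^s}$. Second, the path count is at most $(2|\Lambda_N|)^{m-1}\le (2(2N+1)^n)^{m}$, which is again $\varepsilon^{-o(1)}$ per step. The resulting bound is
\[
\sum_{m\ge1}\bigl(C\varepsilon N^{\tau+n}\bigr)^m\bigl(e^{-|\pmb{k}-\pmb{k}'|^s}+e^{-|\pmb{k}+\pmb{k}'|^s}\bigr) \le \tfrac12\bigl(e^{-|\pmb{k}-\pmb{k}'|^s}+e^{-|\pmb{k}+\pmb{k}'|^s}\bigr).
\]
This holds because $C\varepsilon N^{\tau+n}\le\varepsilon^{1/2}\le\tfrac14$. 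I would check carefully that the $2\times2$ sum/difference bookkeeping never loses the factor needed in the exponent. If it does, I would fall back to the weighted bound $e^{-|\cdot|^s}$ with a slightly smaller $s$, which is absorbed since the prefactor $\varepsilon^{1/2}$ is tiny.
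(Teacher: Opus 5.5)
Your proposal is correct and follows essentially the paper's route: factor $(T+\varepsilon B)_N=D_N\bigl(I+\varepsilon D_N^{-1}(S+B)_N\bigr)$, bound the divisors below by $\tfrac12(n(N+1))^{-\tau}$ using the Diophantine condition and the drift bound of \Cref{prop: frequency-drift}, and sum the Neumann series for both the operator norm and the entries. The only real difference is in the off-diagonal step. The paper invokes its product lemma (\Cref{lemma: App-3}), whose constant is a lattice sum of the type $\sum_{\pmb{k}}e^{-c|\pmb{k}|^s}$; this is independent of $N$ but grows as $s=s(\varepsilon)\to0$. Your sign-telescoping subadditivity argument with a crude path count instead uses the finiteness of the box and costs a factor $|\Lambda_N|\le(2M+1)^n=\varepsilon^{-o(1)}$ per step. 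That factor, like the trailing $D_N^{-1}$ omitted from your displayed sum, is absorbed by the factor $\varepsilon$ gained per step.
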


Given that $\widehat{\pmb{z}} \in \mathcal{K}(s)$,~\Cref{prop: frequency-drift} implies that the drifted frequency $\pmb{\omega}'$ satisfies~\eqref{eqn: frequency-drift-bound}.  For the diagonal operator $D_{N}$, its eigenvalues admit the uniform lower bound: 
\begin{align}
 | D_{  N; j, \pmb{k}} | & \geq \left| - \langle \pmb{k}, \pmb{\omega} \rangle + \omega_j \right| - \left|  \langle \pmb{k}, \pmb{\omega}' - \pmb{\omega} \rangle  \right|  \nonumber  \\
                                & \geq \frac{1}{n^{\tau}(N+1)^{\tau }} - N \exp\left\{ - (\log M)^{20} \right\} \geq  \frac{1}{2n^{\tau}(N+1)^{\tau }},   \label{eqn: uniform-lower-original-small}
\end{align}
for any $j = 1, \ldots, n$ and $-\pmb{k} + \pmb{e}_j \in \Lambda_N \setminus \{0\}$. This lower bound ensures that the diagonal part of the operator remains sufficiently far from zero, preventing singular behavior. Consequently, the inverse of the restricted operator $T_{N}$ can be estimated via a Neumann series. Since the off-diagonal terms involve only convolution operations that interact predictably with the Gevrey decay, the detailed argument follows standard procedures and is therefore deferred to~\Cref{sec: small-size-original}.

\subsection{Restriction to a translated small box}
\label{subsec: translated-small}

In this section, we analyze the behavior of the operator $T + \varepsilon B$ when restricted to a translated small box. For any fixed $\pmb{k}_0 \in \mathbb{Z}^n \setminus \{0\}$, the translated box is defined as $\pmb{k}_0 + \Lambda_N$. 
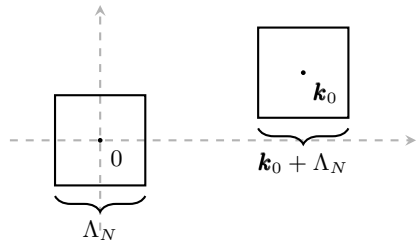
\begin{figure}[htb!]
\centering
\begin{tikzpicture}[scale=0.6, >=stealth]
    \draw[->, thick, dashed, gray!60] (-2,0) -- (7,0);
    \draw[->, thick, dashed, gray!60] (0,-2) -- (0,3);

    \coordinate (k1) at (0, 0);
    \fill (k1) circle (1.5pt) node[below right, font=\footnotesize] {$0$};       
    \draw[line width=0.8pt] ($(k1) - (1.0, 1.0)$) rectangle ($(k1) + (1.0, 1.0)$);
    \draw [decorate, decoration={brace, amplitude=6pt, mirror, raise=4pt}, thick] 
        ($(k1) + (-1.0, -1.0)$) -- ($(k1) + (1.0, -1.0)$) 
        node [midway, yshift=-0.6cm, font=\footnotesize] {$\Lambda_N$};    

    \coordinate (k0) at (4.5, 1.5);
    \fill (k0) circle (1.5pt) node[below right, font=\footnotesize] {$\pmb{k}_0$};
    \draw[line width=0.8pt] ($(k0) - (1.0, 1.0)$) rectangle ($(k0) + (1.0, 1.0)$);
    \draw [decorate, decoration={brace, amplitude=6pt, mirror, raise=4pt}, thick] 
        ($(k0) + (-1.0, -1.0)$) -- ($(k0) + (1.0, -1.0)$) 
        node [midway, yshift=-0.6cm, font=\footnotesize] {$\pmb{k}_0 + \Lambda_N$};
 
\end{tikzpicture}
\caption{The original small box $\Lambda_N$ and its translated counterpart $\pmb{k}_0 + \Lambda_N$.}
\label{fig: box-translation}
\end{figure}
This spatial translation shifts the index set of the operator, as illustrated in~\Cref{fig: box-translation}. We denote the restriction of the operator $T + \varepsilon B$ to this translated domain as 
\begin{equation}
\label{eqn: translated-box-small}
(T + \varepsilon B)_{\pmb{k}_0, N} := (T + \varepsilon B) \big |_{\pmb{k} \in \pmb{k}_0 + \Lambda_N}.
\end{equation} 
Our analysis focuses on cases where the box center lies outside the region $\Lambda_{2N}$ (i.e., $\pmb{k}_{0} \notin \Lambda_{2N}$). To evaluate this restricted operator, we recall the properties of the operator $B$ defined in~\eqref{eqn: rank-1}, which was constructed via a numerical algorithm to facilitate frequency updates. According to~\Cref{prop: B-operator}, the operator $B$ exhibits double-index Gevrey decay. This property ensures that the norm of the restricted operator satisfies a Gevrey-type bound: 
\[
\|B_{\pmb{k}_0, N}\| \leq (2N)^{n} \exp\left\{ -N^{s} \right\}
\]
for any $\pmb{k}_0  \notin \Lambda_{2N}$. In practical terms, this estimate implies that the contribution of $B_{\pmb{k}_0, N}$ is negligible compared to any scale involving exp-polylogarithmic decay, such as $\exp(-(\log N)^\nu)$ for $\nu > 0$. In other words, since Gevrey decay (with $s > 0$) is significantly faster than these scales, the additional operator $B$ do not exert any meaningful influence on the spectral analysis within this regime.

To bound the inverse of $T_{\pmb{k}_0, N}$, we introduce the shifted resonance parameter $\sigma_0 = - \langle \pmb{k}_0, \pmb{\omega}' \rangle$. This allows us to express the restricted operator as a spectral shift of $T_N$: 
\begin{equation}
\label{eqn: translation}
T_{\pmb{k}_0, N} = \sigma_0 + T_{N}, 
\end{equation}
which we denote more concisely as $T_{N}^{\sigma_0}$. Since the operator $S$ is translationally invariant, the translation acts exclusively on the diagonal component, such that $D_{N}^{\sigma_0} = D_{\pmb{k}_0, N}$. By invoking~\Cref{thm: small-size-original}, we ensure that for all $\pmb{\omega} \in \Omega/\Omega_{M}^{\tau}$ and $\widehat{\pmb{z}} \in \mathcal{K}(s)$, the entries of $D_{N; j, \pmb{k}}$ satisfy the lower bound established in~\eqref{eqn: uniform-lower-original-small}. Following~\Cref{defn: nearly-resonant} regarding the non-resonant set, we establish the following property. 
\begin{prop} 
\label{prop: small-eig-one}
Assume that $\widehat{\pmb{z}} \in \mathcal{K}(s)$. If the frequency satisfies $\pmb{\omega} \in \Omega \setminus \Omega_{N}^{\tau}$, the diagonal operator $ D_{N}^{\sigma_0}$ admits at most  one index $\pmb{k} \in \Lambda_{N}$ such that this diagonal entry $ D^{\sigma_0}_{N; j, \pmb{k}} = - \sigma_0 - \langle \pmb{k}, \pmb{\omega}' \rangle + \omega_j $ has an absolute value smaller than $(4n)^{-\tau}(N+1)^{-\tau}$.
\end{prop}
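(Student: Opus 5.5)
A difference argument seems the natural route. We argue by contradiction: suppose two distinct index pairs $(j,\pmb{k})$ and $(j',\pmb{k}')$, with $\pmb{k},\pmb{k}'\in\Lambda_N$, both have small diagonal entries,
\[
\bigl|-\sigma_0-\langle \pmb{k},\pmb{\omega}'\rangle+\omega_j\bigr|<\frac{1}{(4n)^{\tau}(N+1)^{\tau}},\qquad
\bigl|-\sigma_0-\langle \pmb{k}',\pmb{\omega}'\rangle+\omega_{j'}\bigr|<\frac{1}{(4n)^{\tau}(N+1)^{\tau}}.
\]
Subtracting eliminates the unknown shift $\sigma_0$. Writing $\pmb{m}:=(\pmb{k}-\pmb{e}_j)-(\pmb{k}'-\pmb{e}_{j'})$, the triangle inequality gives
\[
|\langle \pmb{m},\pmb{\omega}'\rangle|<\frac{2}{(4n)^{\tau}(N+1)^{\tau}}.
\]
Here $\pmb{m}\in\Lambda_{2(N+1)}$, since each of $\pmb{k}-\pmb{e}_j$ and $\pmb{k}'-\pmb{e}_{j'}$ lies in $\Lambda_{N+1}$. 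Moreover $|\pmb{m}|\le 2n(N+1)$ in the $\ell^1$ sense.

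Next we pass from $\pmb{\omega}'$ back to $\pmb{\omega}$. \Cref{prop: frequency-drift} gives $|\pmb{\omega}'-\pmb{\omega}|\le\varepsilon$, so $|\langle\pmb{m},\pmb{\omega}'-\pmb{\omega}\rangle|\le 2(N+1)\varepsilon$. Because $N\le M$ and $\varepsilon=\exp\{-(\log M)^{20}\}$ by \eqref{eqn: M}, this error is super-polynomially small in $N$. Exactly as in \eqref{eqn: uniform-lower-original-small}, it is therefore dominated by, say, $(4n)^{-\tau}(N+1)^{-\tau}$. This yields
\[
|\langle\pmb{m},\pmb{\omega}\rangle|<\frac{3}{(4n)^{\tau}(N+1)^{\tau}}.
\]

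If $\pmb{m}\neq 0$, the Diophantine condition $\pmb{\omega}\notin\Omega^{\tau}_{N}$ gives
\[
|\langle\pmb{m},\pmb{\omega}\rangle|\ge |\pmb{m}|^{-\tau}\ge (2n)^{-\tau}(N+1)^{-\tau}.
\]
Since $\tau>n-1\ge 0$, we have $3\cdot 4^{-\tau}\le 2^{-\tau}$ whenever $2^{\tau}\ge 3$. If $\tau$ is small, the constant bookkeeping must be arranged more carefully, for instance by absorbing it through the threshold, or more likely by using that the definition tolerates a harmless constant. This comparison of constants is the only step where I expect friction. It is not conceptual, but it may force the slack to come from $|\pmb{m}|\le 2n(N+1)$ versus the $(4n)^{-\tau}$ threshold. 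Granting it, we reach a contradiction, so $\pmb{m}=0$.

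The remaining case is $\pmb{m}=0$, that is, $\pmb{k}-\pmb{k}'=\pmb{e}_j-\pmb{e}_{j'}$. If $j=j'$, then $\pmb{k}=\pmb{k}'$ and the two pairs coincide. If $j\neq j'$, the statement should be read as "at most one index $\pmb{k}$" per component $j$, or else as counting $(j,\pmb{k})$ modulo this trivial degeneracy. These degenerate pairs correspond to the same lattice point of $\pmb{k}-\pmb{e}_j$, so they are handled by the same single-site argument. I would state this interpretation explicitly, concluding that the small-divisor site of $D^{\sigma_0}_N$ is unique.
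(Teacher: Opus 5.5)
Your route is the paper's: subtract the two small entries to eliminate $\sigma_0$, pass from $\pmb{\omega}'$ to $\pmb{\omega}$ via the drift bound, and contradict the Diophantine lower bound $|\pmb{m}|^{-\tau}\ge (2n(N+1))^{-\tau}$.

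The gap is the step you leave as ``granting it''. Your inequality $3\cdot 4^{-\tau}\le 2^{-\tau}$ needs $\tau\ge\log_2 3$, and neither remedy you suggest works. Shrinking $\varepsilon_0$ only controls the drift term, not the factor $2$ produced by the triangle inequality. The threshold $(4n)^{-\tau}(N+1)^{-\tau}$ is fixed by the statement, so there is no spare constant to absorb. Two things are missing.
(i) Do not spend a whole threshold unit on the drift. Since $N\le M$ and $\varepsilon=\exp\{-(\log M)^{20}\}$, the drift term is at most $\eta\,(4n)^{-\tau}(N+1)^{-\tau}$ for any fixed $\eta>0$ once $\varepsilon_0$ is small. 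You then only need $(2+\eta)2^{-\tau}\le 1$.
(ii) This holds for small $\eta$ because $\tau>n-1\ge 1$ when $n\ge 2$. From the hypothesis $\tau>n-1$ you only used $\tau>0$.

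For $n=1$ the comparison really does break down when $\tau<1$, so treat that case separately. There $j=j'=1$ and the difference of the entries is $-(k-k')\omega'$. The Diophantine condition at $k=1$ gives $|\omega|\ge 1$, hence $|(k-k')\omega'|\ge 1-\varepsilon$. This beats $2\cdot 4^{-\tau}(N+1)^{-\tau}$ for $N\ge M_0$ once $\varepsilon_0$ is small.

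Your caution was justified. The paper's proof only looks effortless because its displayed upper bound keeps a single copy of $(4n)^{-\tau}(N+1)^{-\tau}$ after subtracting two such inequalities.

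Two smaller points. First, your intermediate bound on $|\langle\pmb{m},\pmb{\omega}'\rangle|$ is slightly inaccurate, because the $\omega_j$ on the diagonal are unperturbed. The exact identity is that the difference of the two entries equals $-\langle\pmb{m},\pmb{\omega}\rangle-\langle\pmb{k}-\pmb{k}',\pmb{\omega}'-\pmb{\omega}\rangle$. This yields your final estimate directly; the discrepancy is $O(\varepsilon)$ and harmless.

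Second, on $\pmb{m}=0$ you are right, and here you improve on the paper. Its proof applies the Diophantine bound to $\pmb{k}_1-\pmb{k}_2-\pmb{e}_{j_1}+\pmb{e}_{j_2}$ without checking that this vector is nonzero. The literal statement actually fails in this case. For $\pmb{k}'=\pmb{k}-\pmb{e}_j+\pmb{e}_{j'}$, the two entries differ by $(\omega_j-\omega_j')-(\omega_{j'}-\omega_{j'}')$, which is at most $\varepsilon$ in absolute value. So both entries are small whenever one is comfortably below the threshold and both sites lie in the box. Your reading (at most one $\pmb{k}$ for each $j$, equivalently a unique value of $\pmb{k}-\pmb{e}_j$) is therefore a necessary correction of the statement. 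Present it as such, not as an optional interpretation.
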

The proof follows the same argument presented in~\Cref{subsec: original-small} and is therefore deferred to~\Cref{sec: small-eig-one}. In light of~\Cref{prop: small-eig-one}, we define the unique index (if it exists) as the singular index $\pmb{k}_\star$, and its corresponding value $\sigma_\star = - \langle \pmb{k}_{\star}, \pmb{\omega}' \rangle$ as the \textit{minimal resonance gap} (or \textit{minimal shift}). Consequently, the singular site is defined as the singleton set $\Pi = \{ \pmb{k}_{\star} \}$. To maintain consistency with the subsequent multi-scale analysis, we re-index the small box as $\Xi_0 = \pmb{k}_0 + \Lambda_N$. By applying a Neumann series and proceeding analogously to~\Cref{thm: small-size-original}, we can reformulate~\Cref{prop: small-eig-one} into the following lemma for the non-resonant index set $\Xi_0 \setminus \Pi$, which serves as the inductive base for the multi-scale induction in~\Cref{subsec: inversion-localization}. 
\begin{lemma}
\label{lem: small-size-parpare}
Assume that $\widehat{\pmb{z}} \in \mathcal{K}(s)$ and $\pmb{\omega} \in \Omega \setminus \Omega_{M}^{\tau}$. For any center $\pmb{k}_0 \notin \Lambda_{2(N+1)}$, then the restricted operator $(T+\varepsilon B)|_{\Xi_0 \setminus \Pi}$ is invertible, and its inverse satisfies the operator norm bound
\begin{equation}
\label{eqn: small-size-inverse-prepare}
\| ((T+\varepsilon B)|_{\Xi_0 \setminus \Pi} )^{-1} \|_2 \leq (4n)^{\tau}(N+1)^{\tau}.
\end{equation}
In addition, the off-diagonal entries of $((T + \varepsilon B_{N})_{\Xi_0 \setminus \Pi})^{-1}$ satisfy the decay estimate
\begin{equation}
\label{eqn: small-size-off-diagnol-prepare}
 \left| ((T+\varepsilon B)_{\Xi_0 \setminus \Pi})^{-1} \left(  \pmb{k}, \pmb{k}' \right) \right|  \leq  \exp \left\{-| \pmb{k} - \pmb{k}'|^{s} \right\},
\end{equation}
for any $\pmb{k} \neq  \pmb{k}' \in \Xi_0 \setminus \Pi$.
\end{lemma}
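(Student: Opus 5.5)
We prove \Cref{lem: small-size-parpare} with a Neumann series built on the diagonal part, as in \Cref{thm: small-size-original}. On $\Xi_0 \setminus \Pi$ we write
\[
(T+\varepsilon B)|_{\Xi_0\setminus\Pi} = D|_{\Xi_0\setminus\Pi}\Big( I + \varepsilon\, (D|_{\Xi_0\setminus\Pi})^{-1}(S+B)|_{\Xi_0\setminus\Pi}\Big).
\]
By \Cref{prop: small-eig-one}, applied through the translation identity~\eqref{eqn: translation}, every diagonal entry on $\Xi_0\setminus\Pi$ has modulus at least $(4n)^{-\tau}(N+1)^{-\tau}$, since the only possibly smaller entry sits at $\pmb{k}_\star$ and has been removed. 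Hence $\|(D|_{\Xi_0\setminus\Pi})^{-1}\|_2 \le (4n)^{\tau}(N+1)^{\tau}$.

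The perturbation is controlled with \Cref{prop: convolution} and \Cref{prop: B-operator}. They give $\|S\|\le\gamma$ and $\|B\|\le\gamma$. In fact, since $\pmb{k}_0\notin\Lambda_{2(N+1)}$, the Gevrey bound gives the sharper estimate $\|B_{\pmb{k}_0,N}\|\le (2N)^n e^{-N^s}$. The Neumann series therefore converges provided
\[
\varepsilon\,(4n)^\tau(N+1)^\tau\cdot 2\gamma \le \tfrac12 .
\]
This holds in the small-box regime~\eqref{eqn: small-size}, where $N\le M=\exp\{(\log\frac1\varepsilon)^{1/20}\}$ gives $N^\tau\le \varepsilon^{-1/2}$ once $\varepsilon\le\varepsilon_0$. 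The only loss is a factor $(1-\tfrac12)^{-1}$. To absorb it, we get a sharper diagonal bound by taking a slightly larger threshold than the one in \Cref{prop: small-eig-one}, or equivalently we use that $\varepsilon N^{\tau}\ll 1$ makes the factor $1+O(\varepsilon^{1/2})$. This yields~\eqref{eqn: small-size-inverse-prepare}.

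For the off-diagonal decay~\eqref{eqn: small-size-off-diagnol-prepare}, we expand
\[
(T+\varepsilon B)^{-1}=\sum_{m\ge0}(-\varepsilon)^m (D^{-1}(S+B))^m D^{-1}.
\]
A term of order $m$ contributes a sum over paths $\pmb{k}=\pmb{k}_0'\to\pmb{k}_1'\to\cdots\to\pmb{k}_m'=\pmb{k}'$ inside $\Xi_0\setminus\Pi$. We handle the three kinds of entries separately:
\begin{itemize}
\item An $S_1$ entry decays like $e^{-|\pmb{k}_i'-\pmb{k}_{i+1}'|^s}$ by~\eqref{eqn: hessian-gevrey-1}.
\item An $S_2$ entry involves $|\pmb{k}_i'+\pmb{k}_{i+1}'|\ge 2|\pmb{k}_0|_\infty-2N\ge 2N+2$ on the off-side box. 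Its factor $e^{-|\pmb{k}_i'+\pmb{k}_{i+1}'|^s}$ is therefore bounded by $e^{-|\pmb{k}_i'-\pmb{k}_{i+1}'|^s}$, since $|\pmb{k}_i'-\pmb{k}_{i+1}'|\le 2nN$ is dominated; if the constants are off by a factor, we absorb them into the smallness of $\varepsilon$.
\item A $B$ entry is bounded by $e^{-|\pmb{k}_i'|^s-|\pmb{k}_{i+1}'|^s}\le e^{-|\pmb{k}_i'-\pmb{k}_{i+1}'|^s}$.
\end{itemize}

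Subadditivity of $x\mapsto x^s$ for $s\in(0,1]$ gives $\sum_i|\pmb{k}_i'-\pmb{k}_{i+1}'|^s\ge|\pmb{k}-\pmb{k}'|^s$. Every term with $m\ge1$ is then bounded by
\[
(4n)^{\tau(m+1)}(N+1)^{\tau(m+1)}(2\varepsilon\gamma)^m\,(\#\text{paths})\,e^{-|\pmb{k}-\pmb{k}'|^s},
\]
with the number of paths at most $((2N+1)^n)^{m-1}$. The $m=0$ term vanishes off the diagonal. Summing over $m\ge1$ gives a geometric series with ratio
\[
2\varepsilon\gamma (4n)^\tau(N+1)^{\tau}(2N+1)^n\ll1,
\]
again by $N\le M$, so the total is at most $e^{-|\pmb{k}-\pmb{k}'|^s}$.

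The main obstacle is this last bookkeeping: each path term carries the prefactor $(N+1)^{\tau}(2N+1)^n$ per step, and we must show that $\varepsilon$ beats it uniformly in $N\le M$ while the final constant stays $\le 1$ in front of the exponential. We resolve this by the choice~\eqref{eqn: M}, which makes every polynomial in $M$ at most $\varepsilon^{-o(1)}$, and by using the smallness $\gamma\le 1/(2e)$.
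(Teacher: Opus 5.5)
Your proposal is a correct and more detailed version of the paper's own argument: the paper obtains this lemma by rerunning the Neumann series of \Cref{thm: small-size-original} on $\Xi_0\setminus\Pi$, with the diagonal lower bound from \Cref{prop: small-eig-one} and the Gevrey bounds on $S$ and $B$ (both arguments take \Cref{prop: small-eig-one} at face value, although its proof overlooks index pairs with $\pmb{k}_1-\pmb{k}_2=\pmb{e}_{j_1}-\pmb{e}_{j_2}$, whose diagonal entries differ only by $O(\varepsilon)$). Two precisions on your write-up: the exact constant $1$ in \eqref{eqn: small-size-inverse-prepare} needs a margin in the diagonal bound, which your first fix supplies by declaring the singular site with threshold $(1+\delta)(4n)^{-\tau}(N+1)^{-\tau}$ (the contradiction argument of \Cref{prop: small-eig-one} still allows this since $2^{\tau}>2$ for $\tau>1$), whereas a factor $1+O(\varepsilon^{1/2})$ by itself still exceeds $1$; and because $|\pmb{k}_i'+\pmb{k}_{i+1}'|\ge|\pmb{k}_i'-\pmb{k}_{i+1}'|$ can fail in the $\ell^1$ norm when $n\ge 3$, your absorption of the $S_2$ entries really uses the paper's choice $M^s\le 96/95$, which keeps the per-step loss $\exp\{(2nN)^s\}$ bounded for $N\le M$.
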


For small translated boxes, the inversion condition~\eqref{eqn: inverse-grow} can be verified by examining the diagonal operator alone. Since the perturbation parameter is assumed to satisfy  $\varepsilon \ll \varepsilon_N$, the control of the diagonal operator's entries is already well-separated from the resonance. Specifically, if the minimal resonant gap satisfies $|\sigma_{\star} + \omega_j| \geq \varepsilon_{N}$ for $j=1, \ldots, n$, then we establish the restricted operator on a translated box satisfies the localization condition~\eqref{eqn: off-diagonal}
\begin{theorem}
\label{thm: small-scale-tran}
Assume that $\widehat{\pmb{z}} \in \mathcal{K}(s)$ and $\pmb{\omega} \in \Omega \setminus \Omega_{M}^{\tau}$. For any $\pmb{k}_0 \notin \Lambda_{2(N+1)}$, if the minimal resonant gap satisfies $|\sigma_{\star} + \omega_j| \geq \varepsilon_{N}$, then the restricted operator $(T+\varepsilon B)_{\pmb{k}_0,N}$ is invertible, and its inverse satisfies the operator norm bound
\begin{equation}
\label{eqn: trans-small-size-original-inverse}
\| (T+\varepsilon B)_{\pmb{k}_0,N}^{-1} \|_2 \leq \frac{1}{\varepsilon_N}.
\end{equation}
In addition, the off-diagonal entries of $T_{N}^{-1}$ satisfy the decay estimate
\begin{equation}
\label{eqn: trans-small-size-off-diagnol}
 \left| (T+\varepsilon B)_{\pmb{k}_0,N}^{-1} \left(  \pmb{k}, \pmb{k}' \right) \right|  \leq  \exp \left\{-| \pmb{k} - \pmb{k}'|^{s} \right\},
\end{equation}
for any $\pmb{k} \neq  \pmb{k}' \in \pmb{k}_0+ \Lambda_{N}$.

\end{theorem}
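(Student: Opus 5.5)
The plan is a Schur complement (Feshbach) reduction onto the singleton singular site $\Pi = \{\pmb{k}_\star\}$. The input is \Cref{lem: small-size-parpare}, which already controls the inverse on the non-resonant part $\Xi_0\setminus\Pi$. Write $A := (T+\varepsilon B)_{\pmb{k}_0,N}$ in block form with respect to the splitting $\Xi_0 = (\Xi_0\setminus\Pi)\cup\Pi$:
\[
A = \begin{pmatrix} A_{11} & A_{12} \\ A_{21} & A_{22} \end{pmatrix},
\]
where $A_{11} = (T+\varepsilon B)|_{\Xi_0\setminus\Pi}$. The off-diagonal blocks $A_{12}, A_{21}$ equal $\varepsilon(S+B)$ restricted to the corresponding index pairs, and $A_{22}$ is the $n\times n$ block at $\pmb{k}_\star$. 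Its diagonal part is $\mathrm{diag}(\sigma_\star + \omega_j - \dots)$, i.e.\ the entries $D^{\sigma_0}_{N;j,\pmb{k}_\star}$, plus an $O(\varepsilon\gamma)$ correction. If no singular index exists, \Cref{prop: small-eig-one} gives all diagonal entries at least $(4n)^{-\tau}(N+1)^{-\tau}\gg\varepsilon_N$. In that case the Neumann-series argument of \Cref{thm: small-size-original} applies verbatim and yields both bounds, so I treat only the case $\Pi\neq\emptyset$.

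\textbf{Step 1 (Schur complement).} By \Cref{lem: small-size-parpare}, $\|A_{11}^{-1}\|\le (4n)^\tau(N+1)^\tau$. Form $\mathcal{S}_\star := A_{22} - A_{21}A_{11}^{-1}A_{12}$. Using $\|A_{12}\|,\|A_{21}\|\le\varepsilon\gamma$ from \Cref{prop: convolution} and \Cref{prop: B-operator}, the correction satisfies
\[
\|A_{21}A_{11}^{-1}A_{12}\|\le \varepsilon^2\gamma^2(4n)^\tau(N+1)^\tau .
\]
Since $N\le M$ and $M$ is given by~\eqref{eqn: M}, this quantity is far smaller than $\varepsilon_N=\exp\{-(\log N)^{15}\}$. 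The diagonal entries of $A_{22}$ equal $-\langle\pmb{k}_\star,\pmb{\omega}'\rangle$ shifted appropriately, i.e.\ essentially $\sigma_\star+\omega_j$, up to an $\varepsilon\gamma$ perturbation. With the hypothesis $|\sigma_\star+\omega_j|\ge\varepsilon_N$ and $\varepsilon\ll\varepsilon_N$, this gives $\|\mathcal{S}_\star^{-1}\|\le 2/\varepsilon_N$. I would arrange constants, for instance by applying the hypothesis with a slightly larger gap or absorbing the factor via $\varepsilon\ll\varepsilon_N$, so that the final bound reads $1/\varepsilon_N$. The step I expect to be the real obstacle is that $A_{22}$ is an $n\times n$ block whose diagonal entries are the distinct values $-\sigma_0-\langle\pmb{k}_\star,\pmb{\omega}'\rangle+\omega_j$. \Cref{prop: small-eig-one} only says the index $\pmb{k}_\star$ is unique, so several $j$ may be small simultaneously. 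The gap hypothesis must be used for every $j$, and the off-diagonal entries within the block are $O(\varepsilon)$ and absorbed.

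\textbf{Step 2 (norm bound).} Use the block inversion formula
\[
A^{-1}=\begin{pmatrix} A_{11}^{-1}+A_{11}^{-1}A_{12}\mathcal{S}_\star^{-1}A_{21}A_{11}^{-1} & -A_{11}^{-1}A_{12}\mathcal{S}_\star^{-1}\\ -\mathcal{S}_\star^{-1}A_{21}A_{11}^{-1} & \mathcal{S}_\star^{-1}\end{pmatrix}.
\]
Every term other than $\mathcal{S}_\star^{-1}$ carries at least a factor $\varepsilon(4n)^\tau(N+1)^\tau$, which is tiny relative to $1$. This yields $\|A^{-1}\|_2\le\varepsilon_N^{-1}$ after the constant bookkeeping above.

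\textbf{Step 3 (off-diagonal decay).} For $\pmb{k}\neq\pmb{k}'$ both in $\Xi_0\setminus\Pi$, the leading term $A_{11}^{-1}(\pmb{k},\pmb{k}')$ is bounded by $e^{-|\pmb{k}-\pmb{k}'|^s}$ from \eqref{eqn: small-size-off-diagnol-prepare}. The correction entry is bounded by
\[
\sum_{\pmb{m},\pmb{m}'}|A_{11}^{-1}(\pmb{k},\pmb{m})|\,\varepsilon|A_{12}(\pmb{m},\pmb{k}_\star)|\,\|\mathcal{S}_\star^{-1}\|\,\varepsilon|A_{21}(\pmb{k}_\star,\pmb{m}')|\,|A_{11}^{-1}(\pmb{m}',\pmb{k}')| .
\]
Here the Gevrey decay of $S$ (\Cref{prop: convolution}) and of $B$ (\Cref{prop: B-operator}, where $B$ is in fact negligible off $\Lambda_{2N}$) combine with subadditivity $|a+b|^s\le|a|^s+|b|^s$. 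This gives a factor $\varepsilon^2\varepsilon_N^{-1}(2N)^{2n}e^{-|\pmb{k}-\pmb{k}'|^s}$, and the prefactor is $\le 1$ because $\varepsilon\ll\varepsilon_N$. The entries involving $\pmb{k}_\star$ as a row or column are treated the same way, with one fewer factor. The constants in the two pieces must be split, for instance $\tfrac12+\tfrac12$, by slightly strengthening the lemma's bound via its Neumann-series proof. This delivers \eqref{eqn: trans-small-size-off-diagnol}.
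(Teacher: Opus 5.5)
Your Feshbach reduction onto $\Pi=\{\pmb{k}_\star\}$ is sound in outline, but it is a different and heavier route than the one the paper indicates for this small-box statement.

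\textbf{The paper's route.} The paper argues from the diagonal alone.
\begin{itemize}
\item Off the singular site, \Cref{prop: small-eig-one} gives entries of modulus at least $(4n)^{-\tau}(N+1)^{-\tau}\geq\varepsilon_N$.
\item At $\pmb{k}_\star$ the gap hypothesis gives $|\sigma_\star+\omega_j|\geq\varepsilon_N$, so $\|D_{\pmb{k}_0,N}^{-1}\|_2\leq 1/\varepsilon_N$.
\item Since $N\leq M$ forces $\varepsilon(2N+1)^n\ll\varepsilon_N^2$, the Neumann series of \Cref{thm: small-size-original} can be run on the whole translated box. Its order-$\ell$ term is bounded entrywise by $\varepsilon_N^{-1}\bigl(C\varepsilon\gamma(2N+1)^n/\varepsilon_N\bigr)^{\ell}e^{-|\pmb{k}-\pmb{k}'|^s}$.
\end{itemize}
This yields the norm bound and the Gevrey decay at once. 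It needs no case split on $\Pi$ and no use of \Cref{lem: small-size-parpare}. The paper's further remark that the passage from \Cref{lem: small-size-parpare} to this theorem goes through the resolvent identity points to the multi-scale template, which is what your argument reproduces.

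\textbf{What your route buys and costs.} Your approach localizes the loss: the factor $1/\varepsilon_N$ lives only in the $n\times n$ block $\mathcal{S}_\star^{-1}$. Entries not routed through $\pmb{k}_\star$ therefore see only the polynomial bound $(4n)^{\tau}(N+1)^{\tau}$. This is exactly the Schur-complement mechanism the paper uses for large boxes (the operator $U$ in \Cref{subsec: small-large}). There $\varepsilon\ll\varepsilon_N$ is unavailable, and the gap must be imposed on $U$ rather than on $\sigma_\star+\omega_j$. The cost appears in your Step 3: you must reopen the Neumann-series proof of \Cref{lem: small-size-parpare} to split constants. Once you do that, running the same series on the full box is the shorter path.

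\textbf{The obstacle you flag in Step 1 does not arise.} Two indices $j_1\neq j_2$ cannot both be small at the same site. The two diagonal entries differ by $\omega_{j_1}-\omega_{j_2}=\langle\pmb{e}_{j_1}-\pmb{e}_{j_2},\pmb{\omega}\rangle$, which has modulus at least $2^{-\tau}$ for $\pmb{\omega}\notin\Omega_M^{\tau}$. That is far above twice the threshold in \Cref{prop: small-eig-one}, and in any case the hypothesis is assumed for every $j$.

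\textbf{The literal constant $1/\varepsilon_N$ cannot be reached by absorbing errors through $\varepsilon\ll\varepsilon_N$.} Take $n=1$, with $\sigma_\star+\omega_1=\varepsilon_N$ and $\varepsilon(S+B)(\pmb{k}_\star,\pmb{k}_\star)$ negative and of order $\varepsilon$. Then $0<\mathcal{S}_\star<\varepsilon_N$, so $\|A^{-1}\|_2\geq\mathcal{S}_\star^{-1}>1/\varepsilon_N$. Only your other option works: a marginally larger gap, or a constant $2$ in the conclusion. The paper's Neumann-series argument has the same defect, so this does not distinguish the two routes.
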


While~\Cref{lem: small-size-parpare} may seem redundant for the small-box regime specified in~\eqref{eqn: small-size}, it serves two primary purposes: first, it characterizes the behavior at the smallest scale of our multi-scale analysis; and second, it provides the necessary inductive basis for that analysis. The transitions from~\Cref{prop: small-eig-one} to~\Cref{lem: small-size-parpare}, and subsequently from~\Cref{lem: small-size-parpare} to~\Cref{thm: small-scale-tran}, are achieved via the resolvent identity, as demonstrated in~\Cref{subsec: inversion-localization}.

\section{Multi-scale analysis: extension to large boxes}
\label{sec: multiscale}

In this section, we extend our analysis to restricted large boxes of size $N > M$, centered at $\pmb{k}_0 \notin \Lambda_{2N}$. To control the propagation of resonances across these expanding scales, we employ a hierarchical exclusion process. This method, fundamentally established by~\citet{bourgain1994construction, bourgain1998quasi}, involves the iterative removal of ``bad'' frequencies that trigger small divisors. Below, we present a streamlined version of this technique, focusing on the systematic refinement of the non-resonant set.

\subsection{Size reduction: clustering of singular boxes}
\label{subsec: size-reduce-cluster}

To bridge the operator properties across disparate scales, we introduce a reduced scale $N'$ derived from the larger scale $N$ via the following polylogarithmic scaling:\begin{equation}
\label{eqn: 10-log-size}
N' = \exp\left\{ (\log N)^{\frac{1}{10}} \right\}. 
\end{equation}
This specific scaling allows us to isolate resonant interactions by identifying parameters where the small divisors, specifically those involving the frequency difference, fall below a critical threshold.
\begin{defn}
\label{defn: second-melnikov}
The \textit{difference nearly-resonant set} is defined as: 
 \begin{equation}
\label{eqn: nearly-resonant-multiscale}
\Omega_{2, N} = \left\{ \pmb{\omega} \in \Omega \bigg |  \left| - \langle \pmb{k}, \pmb{\omega}' \rangle + \omega_{j_1} - \omega_{j_2} \right| <4\varepsilon_{N'}, \; \forall \pmb{k} \in \Lambda_{2N} \setminus \Lambda_{2N'}, \; \forall j_1, j_2 = 1, \ldots, n \right\}.
\end{equation}
The \textit{total difference nearly-resonant set} is subsequently as the union over all scales $N > M$:
 \begin{equation}
\label{eqn: nearly-resonant-multiscale-total}
\Omega_{2} = \bigcup_{N > M } \Omega_{2, N}.\footnote{The subscript ``2'' identifies the nearly-resonant set associated with the second Melnikov condition in classical KAM theory. Its localized interpretation is briefly noted in~\Cref{rem: second-melnikov}, while its wider applications are explored in our forthcoming work. In contrast, the subscript ``$1$'' refers to the single-mode nearly-resonant set (first Melnikov condition).  The procedure for excluding ``bad'' frequencies and its multi-scale extension are detailed in~\Cref{subsec: small-large}. } 
\end{equation}
\end{defn}

Under the assumption that the frequency satisfies $\pmb{\omega} \notin \Omega_{2, N}$, we analyze the behavior of all smaller boxes of size $N'$ contained within the larger box of size $N$. We can ensure that if the operator $T + \varepsilon B$ restricted to these smaller boxes violates the inversion condition~\eqref{eqn: inverse-grow}, then the centers of these singular boxes must be clustered. Specifically, their centers are located within a distance of $4N'$, as illustrated in~\Cref{fig: second-melnikov}. 

\begin{figure}[htb!]
\centering

\begin{tikzpicture}[scale=0.6, >=stealth]

    \draw[line width=0.8pt] (-5, -4) rectangle (5, 4);
    \node at (3.5, 3) {\large $N$};
    
    \draw[line width=0.8pt] (-3, -2) rectangle (3, 2);
    
    \draw[line width=0.8pt] (-2, -0.5) rectangle (-0.5, 1);
    \coordinate (center1) at (-1.25, 0.25);
    \node at (center1) {\tiny $\bullet$};
    \node[below] at (center1) {\footnotesize $\pmb{k}_{0,1}$};
    
    \draw[line width=0.8pt] (0.5, -0.5) rectangle (2, 1);
    \coordinate (center2) at (1.25, 0.25);
    \node at (center2) {\tiny $\bullet$};
    \node[below] at (center2) {\footnotesize $\pmb{k}_{0,2}$};

    \node at (-0.9, 0.7) {\footnotesize $N'$};
    \node at (1.6, 0.7) {\footnotesize $N'$};
    
    \draw[line width=0.8pt] (center1) -- (center2);
    
    \draw [decorate, decoration={brace, amplitude=8pt, raise=4pt, mirror}, line width=0.8pt]
        (-3, -2) -- (3, -2);
    
    \node[below, align=center, font=\normalsize] 
        at (0, -2.8) 
        {$| \pmb{k}_{0,1} - \pmb{k}_{0,2} |_{\infty} < 4N'$};

\end{tikzpicture}

\caption{The clustering of singular box centers as the size reduction.}
\label{fig: second-melnikov}
\end{figure}
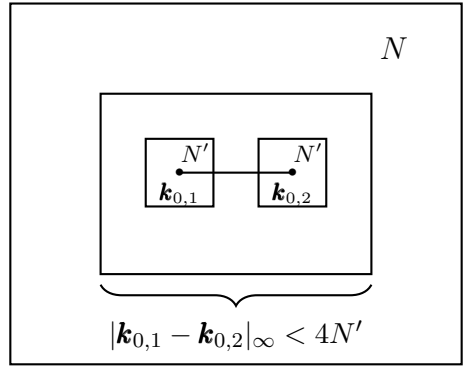

Next, we provide a formal justification for the clustering phenomenon describe above. Consider two small boxes with centers $\pmb{k}_{0,1}, \pmb{k}_{0,2} \in \pmb{k}_0 + \Lambda_{N} $. Given that the frequency satisfies $\pmb{\omega} \notin \Omega_{2,N}$, the entries of the diagonal operators, $D_{\pmb{k}_{0,1}, N'}$ and $D_{\pmb{k}_{0,2}, N'}$, must be sufficiently distinct for any $| \pmb{k}_{0,1} - \pmb{k}_{0,2} |_{\infty} \geq 4N'$. Owing to the smallness of the operator $B$ (introduced in the numerical update) and the translational invariance of the nonlinear perturbation operator $S$, we can bound the difference between the smallest singular values of the restricted operators $T_{\pmb{k}_{0,1}, N'}$ and $T_{\pmb{k}_{0,2}, N'}$. Specifically, we establish the following estimate: 
\begin{equation}
\label{eqn: signle-value-diff}
| \min s(T+\varepsilon B)_{\pmb{k}_{0, 1}, N} - \min s(T+\varepsilon B)_{\pmb{k}_{0, 2}, N} | \geq 3\varepsilon_{N'}
\end{equation}
which demonstrates that if both boxes are ``singular'' (i.e., their smallest singular values are small), they cannot be far apart. To formalize this spatial clustering, we define $\Sigma(\pmb{k_0}, N) $ as the set of box centers within the larger domain $\pmb{k}_0 + \Lambda_N$ that violates the inversion condition~\eqref{eqn: inverse-grow}
\begin{equation}
\label{eqn: eqn-violated-center}
\Sigma(\pmb{k_0}, N) = \left\{ \pmb{k}_0' \in \pmb{k}_0 + \Lambda_N, \; \pmb{k}_{0} \notin \Lambda_{2N} \bigg | \| (T + \varepsilon B)_{\pmb{k}_0', N'}^{-1} \|_2 > \frac{1}{\varepsilon_{N'}} \right\}.
\end{equation}
We then rigorously characterize the spatial clustering of these singular centers with the following lemma.

\begin{lemma}
\label{lem: second-melnikov}
Assume that $\widehat{\pmb{z}} \in \mathcal{K}(s)$ and $\pmb{\omega} \in \Omega\setminus (\Omega_{M}^{\tau} \cup  \Omega_{2, N})$. If a box center satisfies $\pmb{k}'_{0} \in \Sigma(\pmb{k_0}, N) $, then it must be located within a small box of radius $4N'$:
\[
\pmb{k}_0' \in \pmb{k}_0'' + \Lambda_{4N'}
\]
where $\pmb{k}_0'' \in \pmb{k}_0 + \Lambda_N$. 
\end{lemma}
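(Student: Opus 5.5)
The plan is to argue by contradiction through a pairwise separation statement. Suppose two centers $\pmb{k}_{0,1}, \pmb{k}_{0,2} \in \Sigma(\pmb{k}_0, N)$ satisfy $|\pmb{k}_{0,1} - \pmb{k}_{0,2}|_\infty \geq 4N'$. I will show that at most one of the two restricted operators $(T+\varepsilon B)_{\pmb{k}_{0,i},N'}$ can have smallest singular value below $\varepsilon_{N'}$. This forces all singular centers to lie within $\ell^\infty$-distance $4N'$ of one another, so any fixed element $\pmb{k}_0''\in\Sigma(\pmb{k}_0,N)$ works. If $\Sigma$ is empty, the statement is vacuous.

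The first step localizes the singular mode in each small box. For a box $\pmb{k}_{0,i} + \Lambda_{N'}$ with $\pmb{k}_{0,i}\notin\Lambda_{2N'}$, which is guaranteed because $\pmb{k}_0 \notin \Lambda_{2N}$ and $N' \ll N$, the operator is the spectral shift $T^{\sigma_i}_{N'}$ with $\sigma_i = -\langle \pmb{k}_{0,i},\pmb{\omega}'\rangle$, as in \eqref{eqn: translation}. \Cref{prop: small-eig-one} (valid since $\pmb{\omega}\notin\Omega_M^\tau$ and the Diophantine bound propagates to scale $N'$) then gives at most one index $(j_i,\pmb{k}_{\star,i})$ whose diagonal entry is below $(4n)^{-\tau}(N'+1)^{-\tau}$. \Cref{lem: small-size-parpare} controls the inverse on the complement $\Xi_0\setminus\Pi$ by $(4n)^\tau(N'+1)^\tau$. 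A Schur complement / resolvent identity with respect to the single site $\Pi$ then shows that
\[
\|(T+\varepsilon B)_{\pmb{k}_{0,i},N'}^{-1}\|_2 > 1/\varepsilon_{N'}
\]
forces the singular diagonal value, corrected by an $O(\varepsilon)$ Schur term, to be below roughly $\varepsilon_{N'}$. Concretely, $|-\langle \pmb{k}_{0,i}+\pmb{k}_{\star,i},\pmb{\omega}'\rangle + \omega_{j_i}| \le \varepsilon_{N'} + C\varepsilon$. Here the $B$ contribution is negligible by \Cref{prop: B-operator}, since $\|B_{\pmb{k}_0,N'}\|\le (2N')^n e^{-N'^s}$. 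The Schur correction is translation invariant up to this $B$ term because $S$ is Toeplitz on off-side boxes, so the same correction $c$ appears in both boxes up to an error far smaller than $\varepsilon_{N'}$. This is the content of \eqref{eqn: signle-value-diff}.

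The second step subtracts the two near-resonances. Setting $\pmb{k}=(\pmb{k}_{0,1}+\pmb{k}_{\star,1})-(\pmb{k}_{0,2}+\pmb{k}_{\star,2})$ gives
\[
|-\langle\pmb{k},\pmb{\omega}'\rangle+\omega_{j_1}-\omega_{j_2}| < 2\varepsilon_{N'} + (\text{tiny}) < 4\varepsilon_{N'}.
\]
Since $|\pmb{k}_{0,1}-\pmb{k}_{0,2}|_\infty\ge 4N'$ and $|\pmb{k}_{\star,i}|_\infty\le N'$, we get $|\pmb{k}|_\infty \geq 2N'$, so $\pmb{k}\notin\Lambda_{2N'}$ at the boundary. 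Both centers lie in $\pmb{k}_0+\Lambda_N$, so $\pmb{k}\in\Lambda_{2N}$ after the minor adjustment $N+N'\le 2N$. This places $\pmb{\omega}\in\Omega_{2,N}$, which is the contradiction. The choice $j_1=j_2$ is allowed in the definition of $\Omega_{2,N}$, so that case is covered too.

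The main obstacle is the first step: cleanly converting ``smallest singular value $<\varepsilon_{N'}$'' for a non-self-adjoint, $\varepsilon$-perturbed operator into smallness of a single shifted diagonal entry, with an error that is identical in both boxes up to $o(\varepsilon_{N'})$. I would handle it with the Schur complement onto the one-dimensional space of $\Pi$. The complement's inverse is bounded by a polynomial in $N'$, so the Schur complement equals the diagonal entry plus $\varepsilon^2\langle S\,(\cdot)^{-1} S\rangle$. By translation invariance of $S$, that term depends on the box only through $\sigma_i$, and it is Lipschitz in $\sigma_i$ with constant $O(\varepsilon^2 N'^{2\tau})\ll 1$. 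This Lipschitz property absorbs any dependence on $\sigma_i$ into a contraction, yielding the required $3\varepsilon_{N'}$ separation.
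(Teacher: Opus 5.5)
Your route is the paper's own: pairwise separation of singular centers as in \eqref{eqn: signle-value-diff}, relying on translation invariance of $S$ and smallness of $B$. However, the step you flag as the main obstacle does not go through as proposed, and the paper's sketch does not supply it either.

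\textbf{Problem 1: the small-box tools are not available at every scale.} The lemma is needed for every $N>M$; it is applied at each level of the hierarchy in \Cref{subsec: inversion-localization}. But $N'=\exp\{(\log N)^{1/10}\}>M$ as soon as $\log N>(\log M)^{10}$. For such $N'$, \Cref{prop: small-eig-one} and \Cref{lem: small-size-parpare} cannot be invoked, because they need non-resonance on $\Lambda_{2(N'+1)}$. The hypothesis $\pmb{\omega}\notin\Omega_M^\tau$ only controls $\pmb{k}\in\Lambda_{2(M+1)}$. Nothing ``propagates'' beyond that, and avoiding a global Diophantine condition is the whole point of the scheme.

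\textbf{Problem 2: the Schur corrections do not cancel.} This is the more serious issue. The correction $c_i$ is not a function of $\sigma_i$ alone; it depends on the singular site $(j_i,\pmb{k}_{\star,i})$. Its leading part $\varepsilon S_1(\pmb{0})_{j_ij_i}$ changes by order $\varepsilon$ when $j_1\neq j_2$, and the complement $\Xi_0\setminus\Pi$ and the couplings change with $\pmb{k}_{\star,i}$. Your Lipschitz bound compares one function at two values of $\sigma$, whereas two different sites give two different functions. So $c_1-c_2$ is generically of size $\varepsilon$. Likewise, $\varepsilon B$ and the Hankel part $\varepsilon S_2$ are not translation invariant. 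The bound $(2N')^ne^{-N'^s}$ is not small while $N'^s=O(1)$ (recall $M^s\le 7/6$), so these terms are also merely $O(\varepsilon)$.

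Once $\varepsilon_{N'}<\varepsilon$, i.e.\ $(\log N')^{15}>\log(1/\varepsilon)$, which holds for all large $N$, two far-apart boxes can both be singular. The bare divisor $|-\langle\pmb{k},\pmb{\omega}'\rangle+\omega_{j_1}-\omega_{j_2}|$ can then be of size about $\varepsilon|S_1(\pmb{0})_{j_1j_1}-S_1(\pmb{0})_{j_2j_2}|\gg 4\varepsilon_{N'}$. Hence $\pmb{\omega}\notin\Omega_{2,N}$ does not rule this configuration out. Closing this regime needs a different exclusion set, not a sharper estimate: a difference-resonance condition built from renormalized quantities, such as the spectra or Schur complements of the box operators themselves, as in Bourgain.

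\textbf{What does work.} In the regime where your tools are valid, $N'\le M$, you have $(\log N')^{15}\le(\log M)^{15}=(\log\frac1\varepsilon)^{3/4}$. So $\varepsilon\ll\varepsilon_{N'}$, no cancellation is needed, and the argument collapses to one line. Singular values are $1$-Lipschitz in operator norm. Therefore $s_{\min}\big((T+\varepsilon B)_{\pmb{k}_{0,i},N'}\big)<\varepsilon_{N'}$ forces some $(j_i,\pmb{k}_i)$ with $\pmb{k}_i\in\pmb{k}_{0,i}+\Lambda_{N'}$ and $|D_{j_i,\pmb{k}_i}|<\varepsilon_{N'}+\varepsilon\|S+B\|\le\varepsilon_{N'}+\varepsilon$. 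Subtracting gives a divisor for $\pmb{k}=\pmb{k}_1-\pmb{k}_2$ below $2\varepsilon_{N'}+2\varepsilon<4\varepsilon_{N'}$. This needs neither \Cref{prop: small-eig-one}, \Cref{lem: small-size-parpare}, nor a Schur complement.

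\textbf{Two bookkeeping points.} First, take the contradiction hypothesis as $|\pmb{k}_{0,1}-\pmb{k}_{0,2}|_\infty>4N'$ rather than $\ge 4N'$. Then $|\pmb{k}|_\infty\ge 2N'+1$ and $\pmb{k}\notin\Lambda_{2N'}$. Second, the a priori upper bound is only $|\pmb{k}|_\infty\le 2N+2N'$; the inequality $N+N'\le 2N$ does not give $\pmb{k}\in\Lambda_{2N}$. Either enlarge the range of $\pmb{k}$ in $\Omega_{2,N}$, or require the small boxes to lie inside $\pmb{k}_0+\Lambda_N$.
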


\begin{remark}
\label{rem: second-melnikov}
As demonstrated above, the total difference nearly-resonant set corresponds to the second Melnikov condition, specifically regarding the persistence of low-dimensional tori. However, this construction marks a pivotal departure from classical KAM theory. Unlike classical theory, which relies on a uniform, ``all-at-once'' exclusion of resonant sets from the outset, the difference nearly-resonant set is excluded incrementally as the box size $N$ increases. This iterative exclusion necessitates the multi-scale analysis, distinguishing it from standard KAM techniques adapted for the PDE setting (see~\citep{kuksin2000analysis, liu2011kam, yuan2021kam}). When infinite-dimensional ``normal'' frequencies arise from spatial Fourier decomposition, their influence is inherently localized rather than global, being determined by their specific magnitudes and directions. As captured by~\citet{bourgain2005green}, accounting for such localized interactions is essential for the rigorous analysis of high-dimensional PDEs, particularly those involving multiple identical parameters or repeated eigenvalues (multiplicity), which is further exploited in our forthcoming work.
\end{remark}

\subsection{Multi-scale induction: inversion implies localization}
\label{subsec: inversion-localization}

Following the reduction scheme established in~\Cref{subsec: size-reduce-cluster}, the initial large box is iteratively scaled down to a size $4N'$, where $M_0 \leq N' \leq M$ and $M_0 \leq 4N' \leq M$. This process generates a nested sequence of boxes, an ``onion-like'' hierarchy,  where each layer is strictly contained within the previous one. The structure relies on the interplay between resonance and inversion across scales. By~\Cref{prop: small-eig-one}, each such small box in this sequence is guaranteed to contain at most one minimal resonance gap $\sigma_{\star} =  \langle \pmb{k}_{\star}, \pmb{\omega}'  \rangle$.  Simultaneously, for each box centered in the intermediate (annular) region,~\Cref{lem: second-melnikov} ensures that the restricted operator satisfies the inversion condition~\eqref{eqn: off-diagonal}. Importantly, this nested configuration is inherited by the sub-boxes within the intermediate regions, thereby preserving structural uniformity across all scales. This multiscale reduction is categorized into three distinct sets:
\begin{itemize}  
\item \textbf{Resonant boxes}: The sequence of boxes containing the singular sites $ \Pi$:  
\[
\pmb{k}_0 + \Lambda_N = \Xi_{J+1} \supset \Xi_{J} \supset \cdots \supset \Xi_0 \supset \Pi,
\]
where the scale is defined by $|\Xi_j| = 4N_j$ for $j = 0, \ldots, J$.
\item \textbf{Intermediate sub-boxes}: The boxes located within the region $\Xi_{j+1} \setminus \Xi_j$, denoted as $\Gamma_j$, each of fixed size $N_j$.
\item \textbf{Non-resonant index sets}: The sets of indices excluding the singular site are defined as $\Upsilon_j = \Xi_{j+1} \setminus \Pi$ for $t = -1,0, \ldots, J$.
\end{itemize}
The spatial relationship between the resonant box chain, the non-resonant intermediate sub-boxes, and the singular site is shown in~\Cref{fig: reduce-size}.
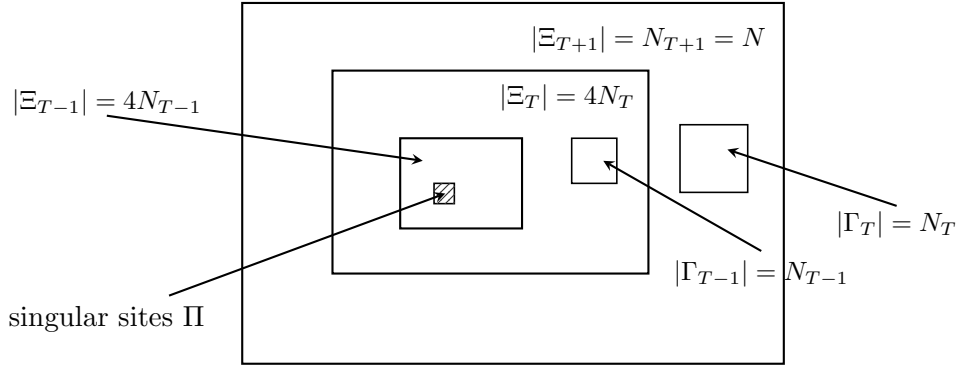
\begin{figure}[htb!]
\centering
\begin{tikzpicture}[scale=0.6, >=stealth]

    \draw[line width=0.8pt] (-7, -4) rectangle (5, 4);
    \node at (2.0, 3.2) {\small $|\Xi_{T+1}|=N_{T+1}=N$}; 
    
    \draw[line width=0.8pt] (-5, -2) rectangle (2, 2.5);
    \node at (0.2, 1.9) {\small $|\Xi_{T}| = 4N_{T}$}; 
    
    \draw[line width=0.8pt] (-3.5, -1) rectangle (-0.8, 1.0);
    \draw[->, thick, black!70!black] (-10,1.5) -- (-3,0.5);
    \node at (-10, 1.8) {\small$ |\Xi_{T-1}| = 4N_{T-1}$}; 
    
    \begin{scope}[shift={(-2.2,0.5)}]
        \draw[line width=0.6pt] (2.5,-0.5) rectangle (3.5,0.5);
    \end{scope}
    \draw[->, thick, black] (4.5,-1.5) -- (1,0.5);
    \node at (4.5,-2.0) [black] {\small  $|\Gamma_{T-1}| = N_{T-1}$};
    
    \begin{scope}[shift={(-1.8,-0.2)}]
        \draw[line width=0.6pt] (4.5,0.0) rectangle (6.0,1.5);
    \end{scope}
    \draw[->, thick, black!70!black] (7.5,-0.5) -- (3.75,0.75);
    \node at (7.5,-0.9) [black!70!black] {\small  $|\Gamma_{T}| = N_{T}$};
    
    \begin{scope}[shift={(-1.5,0)}]
        \draw[line width=0.6pt] (-1.25, -0.45) rectangle (-0.8, 0.00);
        \fill[pattern=north east lines, pattern color=black!80] 
             (-1.25, -0.45) rectangle (-0.8, 0.00);
    \end{scope}
    
    \node (sing) at (-10, -3) {singular sites $ \Pi$};
    
    \draw[->, thick] (sing) -- (-2.5, -0.225);

\end{tikzpicture}
\caption{The multiscale box reduction process. The large-scale box $\Lambda_N$ is organized into a resonant chain $\Xi_t$ (containing the singular site $\Pi$) coupled with a collection of non-resonant sub-boxes $\Gamma_t$. This hierarchical structure forms the basis for proving the localization of the inverse operator. }
\label{fig: reduce-size}
\end{figure}

With these preparations above, we now state the following theorem for the multi-scale analysis, which extends the results obtained for small boxes~(\Cref{thm: small-scale-tran}) to larger scales via induction.
\begin{theorem}
\label{thm: induction-multi-scale}
Assume that $\widehat{\pmb{z}} \in \mathcal{K}(s)$ and $\pmb{\omega} \in \Omega\setminus (\Omega_{M}^{\tau} \cup G)$. Suppose for any $M^0 \leq N\leq M^r$, the operator $(T + \varepsilon B)_{N}$ satisfies~\Cref{cond: implementation}. If the inversion condition~\eqref{eqn: inverse-grow} is satisfied for any $M^{r} < N \leq M^{r+1}$, then the localization condition
 \begin{equation}
 \label{eqn: ri-off-digonal-final}
          |(T + \varepsilon B)_{\Xi_{j+1}}^{-1}(\pmb{k}_1, \pmb{k}_2)|  \leq \exp\left\{- \frac{|\pmb{k}_1- \pmb{k}_2|^s}{2} \right\}, \quad \text{for}~|\pmb{k}_1 - \pmb{k}_2| > N_{j+1}^\frac{1}{2} 
\end{equation}
is also satisfied for the same range. 
\end{theorem}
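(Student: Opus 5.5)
We argue by induction along the resonant chain $\Xi_0\subset\Xi_1\subset\cdots\subset\Xi_{J+1}=\pmb{k}_0+\Lambda_N$, using the resolvent identity to pass from inversion to localization. Write $A:=T+\varepsilon B$. The base case $j=-1$ is \Cref{lem: small-size-parpare} and \Cref{thm: small-scale-tran}: on $\Xi_0$ (size $4N_0$ with $M_0\le 4N_0\le M$) the inverse is bounded by $1/\varepsilon_{N_0}$, and its entries decay like $\exp\{-|\pmb{k}-\pmb{k}'|^s\}$. Since $\pmb{k}_0\notin\Lambda_{2N}$, only $|\pmb{k}_1-\pmb{k}_2|$ matters on these off-side boxes, and $B$ contributes at most $(2N)^n e^{-N^s}$, which is negligible. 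For the inductive step we assume $\|A_{\Xi_{j+1}}^{-1}\|_2\le 1/\varepsilon_{N_{j+1}}$ (this is the inversion hypothesis) together with localization on all smaller boxes. Those smaller boxes are $\Xi_j$ and every intermediate box $\Gamma_j\subset\Xi_{j+1}\setminus\Xi_j$. By \Cref{lem: second-melnikov}, the intermediate boxes lie away from the cluster $\Sigma(\pmb{k}_0,N)$, so they satisfy \Cref{cond: implementation} at scale $N_j$ through the induction hypothesis for sizes $\le M^r$.

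\textbf{Step 1 (covering and resolvent expansion).} Fix $\pmb{k}_1,\pmb{k}_2\in\Xi_{j+1}$ with $|\pmb{k}_1-\pmb{k}_2|>N_{j+1}^{1/2}$. For each site $\pmb{k}$, pick a sub-box $W(\pmb{k})$ containing $\pmb{k}$ with $\pmb{k}$ at distance $\gtrsim N_j/2$ from its relative boundary. If $\pmb{k}$ is far from $\Xi_j$, take $W(\pmb{k})$ to be a good $\Gamma_j$-box of size $N_j$. If $\pmb{k}$ is near the resonant chain, take $W(\pmb{k})=\Xi_j$ (enlarged by $N_j$). The resolvent identity
\[
A_{\Xi_{j+1}}^{-1}(\pmb{k},\pmb{k}_2)=A_{W}^{-1}(\pmb{k},\pmb{k}_2)\mathbf 1_{\pmb{k}_2\in W}-\varepsilon\sum_{\pmb{m}\in W,\;\pmb{m}'\in\Xi_{j+1}\setminus W}A_W^{-1}(\pmb{k},\pmb{m})\,S(\pmb{m},\pmb{m}')\,A_{\Xi_{j+1}}^{-1}(\pmb{m}',\pmb{k}_2)
\]
combines the localization of $A_W^{-1}$ (for $|\pmb{k}-\pmb{m}|\ge N_j^{1/2}$) with the Gevrey decay of $S$ from \Cref{prop: convolution}. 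For $\pmb{m}$ near $\pmb{k}$ we use instead the norm bound $1/\varepsilon_{N_j}$. The result is a one-step estimate
\[
|A_{\Xi_{j+1}}^{-1}(\pmb{k},\pmb{k}_2)|\le e^{-c\,(N_j/2)^{s}}\,\varepsilon_{N_j}^{-1}(CN_j)^{2n}\max_{\pmb{m}'}|A_{\Xi_{j+1}}^{-1}(\pmb{m}',\pmb{k}_2)|,
\]
with $\pmb{m}'$ at distance at least about $N_j/2$ closer to or beyond $\pmb{k}$. Since $\varepsilon_{N_j}^{-1}=\exp\{(\log N_j)^{15}\}\ll e^{N_j^{s}/4}$ once $N_j\ge M_0$ (the choice of $M$ in \eqref{eqn: M} is what makes this hold), each step gains a factor $e^{-(1-\delta)|\text{step}|^s}$.

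\textbf{Step 2 (iteration).} Iterate Step 1 from $\pmb{k}_1$ until the walk either reaches within $N_j$ of $\pmb{k}_2$ or has made about $|\pmb{k}_1-\pmb{k}_2|/N_j$ steps. Terminate with the crude bound $\|A_{\Xi_{j+1}}^{-1}\|\le 1/\varepsilon_{N_{j+1}}$. Subadditivity of $x\mapsto x^s$ runs the wrong way, so we track the exponent $\sum|\text{step}_i|^s\ge|\pmb{k}_1-\pmb{k}_2|^s$ directly. Each step loses only the factor $(1-\delta)$ and polynomial entropy $(CN_j)^{2n}$.

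\textbf{Main obstacle.} The hard part is the step that passes through $\Xi_j$. There only the weak norm bound $1/\varepsilon_{N_{j+1}}=\exp\{(\log N_{j+1})^{15}\}$ is available, with no decay inside. This loss occurs at most a bounded number of times, because $\Xi_j$ is a single cluster of diameter $\sim 4N_j\ll N_{j+1}^{1/2}$. It must be absorbed by the gap between the decay rate $1$ and the target rate $1/2$. This requires
\[
(\log N_{j+1})^{15}+ O(N_j^{s})\le\tfrac12|\pmb{k}_1-\pmb{k}_2|^{s},\qquad |\pmb{k}_1-\pmb{k}_2|>N_{j+1}^{1/2},
\]
which holds because $N_j=\exp\{(\log N_{j+1})^{1/10}\}$ and $s\gg(\log N)^{-1/2}$ on the relevant range. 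Tracking the rate carefully as $1\to 1-\sum_j\delta_j\ge 1/2$ across the $J$ levels, with $\delta_j$ summable since $N_j$ grows super-exponentially, is what I would check most carefully.
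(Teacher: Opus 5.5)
Your proposal has a genuine quantitative gap. Every gain in your Step~1 and in your final absorption inequality is supposed to come from Gevrey decay alone: you need $e^{-c(N_j/2)^s}$ to beat $\varepsilon_{N_j}^{-1}(CN_j)^{2n}$, and you need $\frac12|\pmb{k}_1-\pmb{k}_2|^s$ to beat $(\log N_{j+1})^{15}$. In this paper, however, $s$ is tied to $M$. The Iteration Lemma (\Cref{lem: iteration}) forces $95M^{s}\le 96$, i.e.\ $s\le \log(96/95)/\log M$, so $N^{s}\le (96/95)^2$ for every $N\le M^2$.

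Two consequences follow. On all small boxes $M_0\le N_j\le M$, and on the first large range $M<N_{j+1}\le M^2$, the factor $e^{-c(N_j/2)^s}$ is of order one, while $\varepsilon_{N_j}^{-1}=\exp\{(\log N_j)^{15}\}$ is enormous. So your one-step factor is much larger than $1$ and there is no contraction. Likewise $(\log N_{j+1})^{15}\le \frac12|\pmb{k}_1-\pmb{k}_2|^s$ is false there, since the right side is $O(1)$. Your claims that $\varepsilon_{N_j}^{-1}\ll e^{N_j^s/4}$ once $N_j\ge M_0$, and that $s\gg(\log N)^{-1/2}$ on the relevant range, are therefore incorrect; the latter requires $\log N\gg(\log M)^2$.

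The missing ingredient is the factor $\varepsilon$ carried by every coupling term ($\varepsilon S$, $\varepsilon B$, i.e.\ $\varepsilon P$, $\varepsilon P^*$). You write it in the resolvent identity and then drop it in the one-step estimate. By \eqref{eqn: M}, $\varepsilon=\exp\{-(\log M)^{20}\}$, so $\varepsilon\,\varepsilon_{N}^{-1}(CN)^{2n}\ll 1$ whenever $\log N\ll(\log M)^{4/3}$. This factor, not Gevrey decay, is what makes the paper's iterative inequalities \eqref{eqn: ri-1-norm}--\eqref{eqn: l1-linfty-ri} contract; note their coefficients $\varepsilon$ and $\sqrt{\varepsilon}$. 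The Gevrey gain you invoke only becomes available for $\log N\gtrsim(\log M)^{4/3}$, where, with $s$ of order $1/\log M$, one has $s\log N\gtrsim(\log N)^{1/4}\gg\log\log N$. A correct version of your argument must pay the per-step losses and the terminal loss $\varepsilon_{N_{j+1}}^{-1}$ with powers of $\varepsilon$ below that threshold, and with the Gevrey margin above it, at each scale.

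Once repaired, your argument is a legitimate but different route. The paper first removes the singular site $\Pi$. It glues $\Upsilon_{j-1}$ and the good boxes $\Gamma_j$ into $\Upsilon_j=\Xi_{j+1}\setminus\Pi$, obtaining a Schur-test bound and localization without any inversion hypothesis. Only then does it reinserts the single site $\Pi$ through one more resolvent identity, and that is where the assumed bound on $(T+\varepsilon B)_{\Xi_{j+1}}^{-1}$ enters. The bound on $\Upsilon_j$ is also reused in the Schur-complement argument of \Cref{subsec: small-large}.

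You instead treat all of $\Xi_j$ as the bad region and use the inversion hypothesis only as a terminal bound. This has the advantage of not needing the singular set to be a single site, but two points remain open. First, the loss at $\Xi_j$ does not automatically occur a bounded number of times along a walk. Either make visits to $\Xi_j$ contracting steps using the inductive localization of $(T+\varepsilon B)_{\Xi_j}^{-1}$, or stop at the first entrance and expand from the $\pmb{k}_2$ side with the right resolvent identity (recall $T+\varepsilon B$ is not self-adjoint). Second, \Cref{cond: implementation} only supplies rate $1/2$ on sub-boxes. You must therefore state an explicitly strengthened inductive hypothesis with rates $1-\sum_i\delta_i$, as you anticipate; the paper sidesteps this by using the rate-one bound \eqref{eqn: trans-small-size-off-diagnol} for $\Gamma_j$.
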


Following the multi-scale reduction~(\Cref{lem: second-melnikov}) and the notations provided above, we now proceed with the proof. In direct correspondence with the small box analysis for the size range specified in~\eqref{eqn: small-size} (see~\Cref{subsec: translated-small}), the proof is organized into the following two steps.

\paragraph{Non-resonant gluing} For the base case ($i = 0$), we consider the inverse operator $(T+\varepsilon B)_{\Upsilon_{-1}}^{-1} =(T+\varepsilon B)_{\Xi_{0} \setminus \Pi}^{-1}$. The non-resonant index set $\Upsilon_{-1} = \Xi_{0} \setminus \Pi$ and its corresponding inverse were previously constructed in~\Cref{lem: small-size-parpare}. Given the small box size specified in the regime~\eqref{eqn: small-size}, the operator satisfies both the norm bound~\eqref{eqn: small-size-inverse-prepare} and the off-diagonal decay estimates~\eqref{eqn: small-size-off-diagnol-prepare}, thereby establishing the initial step.

We now invoke the inductive hypothesis for the step $i = j$. By construction, the non-resonant index sets $\Upsilon_{j-1}$ and the intermediate sub-boxes $\Gamma_j$ satisfy both the inversion condition~\eqref{eqn: trans-small-size-original-inverse} and the localization condition~\eqref{eqn: trans-small-size-off-diagnol}. To treat the larger set $\Upsilon_j$, we apply the resolvent identity in block form. By decomposing the set as $\Upsilon_j = \Gamma_j \cup (\Upsilon_j \setminus \Gamma_j)$, we obtain the following identity:
\begin{align}
(T + \varepsilon B)_{\Upsilon_{j}}^{-1} = & \begin{pmatrix} 
  (T + \varepsilon B)_{\Gamma_{j}}^{-1}    &       0                          \\
     0      &         (T + \varepsilon B)_{\Upsilon_{j}\setminus \Gamma_{j}}^{-1} 
\end{pmatrix} \nonumber \\ & -
\begin{pmatrix} 
 - (T + \varepsilon B)_{\Gamma_{j}}^{-1}    &       0                          \\
     0      &         (T + \varepsilon B)_{\Upsilon_{j}\setminus \Gamma_{j}}^{-1} 
\end{pmatrix}  \begin{pmatrix} 
                        0                          &          \varepsilon P^* \\
                         \varepsilon P      &        0
                         \end{pmatrix}   (T + \varepsilon B)_{\Upsilon_{j}}^{-1} \label{eqn: resolvent-identity}
\end{align}
where the operators $P$ and $P^*$ are coupling operators generated from $B + S$. In terms of matrix entries, this identity is expressed as:
\begin{align}
(T + \varepsilon B)_{\Upsilon_j }^{-1}(\pmb{k}_1, \pmb{k}_2) = & (T + \varepsilon B)_{\Gamma_j} ^{-1}(\pmb{k}_1, \pmb{k}_2) \nonumber \\
 &- \varepsilon \sum_{\pmb{k}_{3} \in \Gamma_j} \sum_{\pmb{k}_{4} \in \Upsilon_j \setminus \Gamma_j}  (T + \varepsilon B)_{\Gamma_j}^{-1}(\pmb{k}_1, \pmb{k}_3) P^*(\pmb{k}_3, \pmb{k}_4)(T + \varepsilon B)_{\Upsilon_j}^{-1} (\pmb{k}_4, \pmb{k}_2) \label{eqn: resolvent-identity-element}
\end{align}
where $\pmb{k}_1$ is the center of an intermediate sub-box $\Gamma_j$. Following~\Cref{prop: convolution} and~\Cref{prop: B-operator}, since the lattice box center under consideration lies outside the box $\Lambda_{2N}$, the coupling operators exhibit Gevrey decay:
\begin{equation}
\label{eqn: gevrey-p}
P(\pmb{k}, \pmb{k}') \leq \frac{2}{3} \exp \left\{ - |\pmb{k} - \pmb{k}'|^s \right\}, \quad \mathrm{and} \quad P^*(\pmb{k}, \pmb{k}') \leq \frac{2}{3} \exp \left\{- |\pmb{k} - \pmb{k}'|^s \right\}.
\end{equation}
By substituting the Gevrey decay~\eqref{eqn: gevrey-p} into the resolvent identity~\eqref{eqn: resolvent-identity-element}, we derive two iterative inequalities based on the distance between indices:
\begin{itemize}
\item For any $|\pmb{k}_1 - \pmb{k}_2| \geq 2N_j^{\frac{1}{2}}$, the resolvent identity~\eqref{eqn: resolvent-identity-element} reads:
         \begin{align}
         |(T + \varepsilon B)_{\Upsilon_j }^{-1}(\pmb{k}_1, \pmb{k}_2)|
         \leq &  \exp\left\{ - | \pmb{k}_1 - \pmb{k}_2|^s  \right\} \nonumber \\
          &+ \varepsilon \sum_{\pmb{k}_{4} \in \Upsilon_j \setminus \Gamma_j} \exp\left\{-\frac{|\pmb{k}_1 - \pmb{k}_4|^{s}}{2} \right\} |(T + \varepsilon B)_{\Upsilon_j}^{-1} (\pmb{k}_4, \pmb{k}_2)|   \label{eqn: ri-1-norm}
         \end{align}
\item For any $|\pmb{k}_1 - \pmb{k}_2| <2N_j^{\frac{1}{2}}$, , the resolvent identity~\eqref{eqn: resolvent-identity-element} reads:
         \begin{align}
         |(T + \varepsilon B)_{\Upsilon_j }^{-1}(\pmb{k}_1, \pmb{k}_2)|
         \leq & \exp\left\{ (\log 4N_j)^{15} \right\}  \nonumber \\
          &+ \varepsilon \sum_{\pmb{k}_{4} \in \Upsilon_j \setminus \Gamma_j} \exp\left\{-\frac{|\pmb{k}_1 - \pmb{k}_4|^{s}}{2} \right\} |(T + \varepsilon B)_{\Upsilon_j}^{-1} (\pmb{k}_4, \pmb{k}_2)|   \label{eqn: ri-2-norm}
         \end{align}
\end{itemize}         
By utilizing the iterative inequalities,~\eqref{eqn: ri-1-norm} and~\eqref{eqn: ri-2-norm}, we establish the following $L^1$ and $L^{\infty}$ bounds:
\begin{subequations}
\label{eqn: l1-linfty-ri}
\begin{empheq}[left=\empheqlbrace]{align} 
         &\sum_{\pmb{k}_1 \in \Upsilon_j} |(T + \varepsilon B)_{\Upsilon_j }^{-1}(\pmb{k}_1, \pmb{k}_2)|  \leq  \exp\left\{ (\log N_j)^{16} \right\} + \sqrt{\varepsilon}  \|(T + \varepsilon B)_{\Upsilon_j }^{-1} \|_{1}         \label{eqn: ri-1} \\
         &\sum_{\pmb{k}_2 \in \Upsilon_j} |(T + \varepsilon B)_{\Upsilon_j }^{-1}(\pmb{k}_1- \pmb{k}_2|  \leq  \exp\left\{ (\log N_j)^{16} \right\} + \sqrt{\varepsilon}  \|(T + \varepsilon B)_{\Upsilon_j }^{-1} \|_{\infty}      \label{eqn: ri-2}
 \end{empheq}        
 \end{subequations}
By taking the maximum of these bounds and applying the Schur test, we arrive at the operator norm estimate:
 \begin{equation}
 \label{eqn: ri-l2}
 \| (T + \varepsilon B)_{\Upsilon_j }^{-1}\|_2  \leq \sqrt{ \| (T + \varepsilon B)_{\Upsilon_j }^{-1} \|_1 \cdot \| (T + \varepsilon B)_{\Upsilon_j }^{-1}\|_{\infty}} \leq  \exp\left\{ (\log N_j)^{17} \right\}.
 \end{equation}     
 For any $|\pmb{k}_1 - \pmb{k}_2| > N_j$, the resolvent identity~\eqref{eqn: resolvent-identity-element} implies
 \[
 |(T + \varepsilon B)_{\Upsilon_j }^{-1}(\pmb{k}_1, \pmb{k}_2)| \leq  \frac{\varepsilon}{2} \sum_{\pmb{k}_{4} \in \Upsilon_j \setminus \Gamma_j} \exp\left\{-\frac{|\pmb{k}_1 - \pmb{k}_4|^{s}}{2} \right\} |(T + \varepsilon B)_{\Upsilon_j}^{-1} (\pmb{k}_4, \pmb{k}_2)|
 \]
 Given that the inversion bound~\eqref{eqn: ri-l2} holds, repeated application of this iteration yields the final localization bound:
 \begin{equation}
 \label{eqn: ri-off-digonal}
          |(T + \varepsilon B)_{\Upsilon_{j}}^{-1}(\pmb{k}_1, \pmb{k}_2)|  \leq  \frac12\exp\left\{- \frac{|\pmb{k}_1- \pmb{k}_2|^s}{2} \right\}. 
\end{equation}

 \paragraph{Inversion and localization} 
Finally, for the regime where $|\pmb{k}_1 - \pmb{k}_2| > N_j$, the resolvent identity leads to the following refinement:    
\begin{align*}
        |(T + \varepsilon B)_{\Xi_{j+1} }^{-1}(\pmb{k}_1, \pmb{k}_2)| 
        &\leq \frac12\exp\left\{- \frac{|\pmb{k}_1- \pmb{k}_2|^s}{2} \right\}   \\
        &+ \frac{\sqrt{\varepsilon}}{2} \sum_{\pmb{k}_4 \in \Xi_{j+1} \setminus \Upsilon_{j}} \exp \left\{- \frac{|\pmb{k}_1- \pmb{k}_4|^s}{2}\right\} |(T + \varepsilon B)_{\Xi_{j+1}}^{-1}(\pmb{k}_4, \pmb{k}_2)| 
\end{align*}
Incorporating the inversion bound \eqref{eqn: trans-small-size-original-inverse} to control the diagonal term and repeating application of the above iteration, we derive the final localization bound~\eqref{eqn: ri-off-digonal-final}, which completes the proof of~\Cref{thm: induction-multi-scale}.

\subsection{From small to large boxes: exclusion of resonant frequencies across scales}
\label{subsec: small-large}

In this section, we establish the mechanism for excluding the ``bad'' frequencies (the resonant set) to ensure the validity of the inverse estimates required for~\Cref{thm: small-scale-tran} and~\Cref{thm: induction-multi-scale}. The strategy for measure exclusion shifts depending on the box size regime.

\paragraph{Small box regime} For small boxes within regime~\eqref{eqn: small-size}, the perturbation parameter $\varepsilon$ remains smaller than $\varepsilon_N$. In this context, a Neumann series approach, consistent with~\Cref{thm: small-size-original}, is sufficient to maintain control over the operator. To ensure invertibility, we explicitly characterize the excluded frequencies via the single-mode resonant set:
\begin{equation}
\label{eqn: single-mode-resonance-small}
\Omega_{1, N} = \left\{ \pmb{\omega} \in \Omega \big | |\sigma_{\star} + \omega_j| = | - \langle \pmb{k}_{\star}, \pmb{\omega}' \rangle + \omega_j| < \varepsilon_{N}, \; \forall j=1,\ldots, n \right\},
\end{equation}
It follows by direct calculation that the measure of this set satisfies:
\begin{equation}
\label{eqn: single-mode-resonance-small-measure}
\mathrm{mes}(\Omega_{1, N, \pmb{k}_0} ) \leq n^2(2N+1)^{n-1}\varepsilon_{N}.
\end{equation}

\paragraph{Large box regime} For large boxes defined in regime~\eqref{eqn: large-size}, we analyze the norm of the restricted operator, specifically its smallest singular value. We represent the operator $(T + \varepsilon B)_{\pmb{k}_0, N}$ in a block structure partitioned by the singular site $\Pi$:
\begin{equation}
\label{eqn: large-box-operator-inverse-dig-1}
(T+\varepsilon B)_{\pmb{k}_0, N} = \begin{pmatrix} (T+\varepsilon B)_{\Xi_{J+1} \setminus\Pi} & P_1^* \\ P_1 & (T+\varepsilon B)_{\Pi} \end{pmatrix}.
\end{equation}
To analyze this structure, we introduce the Schur complement of $(T+\varepsilon B)_{\pmb{k}_0, N}$ with respect to the singular site $\Pi$ (see~\citet[Section 3.2.11]{golub2013matrix})
\begin{equation}
\label{eqn: schur-complement}
U = (T+\varepsilon B)_{\Pi}  - P_1(T+\varepsilon B)_{\Xi_{J+1} \setminus\Pi}^{-1} P_1^*
\end{equation}
where the operator restricted to the singular site is expressed as:
\begin{equation}
\label{eqn: operator-singular-set}
 (T+\varepsilon B)_{\Pi} = \sigma_{\star} + \pmb{\omega} + \varepsilon (S + B)(\pmb{k}_{\star}, \pmb{k}_{\star})
\end{equation}
Since $ \| P_1^* \| $ and $ \| P_1 \| $ are both bounded by $ \varepsilon $, we can control
the inverse of $ (T + \varepsilon B)_{\pmb{k}_0, N} $ as
\begin{equation}
    \label{eqn: inverse-control}
    \left\| (T + \varepsilon B)_{\pmb{k}_0, N}^{-1} \right\|_2 \leq 4 \left\| (T + \varepsilon B)_{\Xi_{J+1} \setminus\Pi}^{-1} \right\|_2^2 \cdot \left\| U^{-1} \right\|_2 + \left\| (T + \varepsilon B)_{\Xi_{J+1} \setminus\Pi}^{-1} \right\|_2.
\end{equation}
If we can control $ \| U^{-1} \|_2 $ by $ 1 / \sqrt{\varepsilon_N} $, then we have together
with \eqref{eqn: ri-l2} that 
\begin{equation*}
    \left\| (T + \varepsilon B)_{\pmb{k}_0, N}^{-1} \right\|_2 \leq \frac{1}{\varepsilon_N}.
\end{equation*}
Therefore, we only need to focus on the Schur complement $ U $. By substituting~\eqref{eqn: operator-singular-set} into~\eqref{eqn: schur-complement}, we reformulate the Schur complement as $U = \sigma_{\star} + \Phi$, where the perturbation term collects the remaining terms:
\begin{equation}
\label{eqn: phi}
\Phi =  \pmb{\omega} + \varepsilon (S + B)(\pmb{k}_{\star}, \pmb{k}_{\star}) -  P_1(T+\varepsilon B)_{\Xi_{J+1} \setminus\Pi}^{-1} P_1^*.
\end{equation}
Given the estimate $\| \Phi \|_2 \leq 2 \exp\left\{ (\log N)^{17/10} \right\}$ as shown in~\eqref{eqn: ri-l2}, and applying the triangle inequality $\|\sigma_{\star}  + \Phi\|_2 \geq \left| |\sigma_{\star}| - \| \Phi \|_2 \right|$, we define the single-mode resonant set for the large box as
\begin{equation}
\label{eqn: single-mode-resonance-small-origin}
\Omega_{1, N} = \left\{ \pmb{\omega} \in \Omega \big | \left| |\sigma_{\star}| - \| \Phi \|_2 \right| < \varepsilon_{N}, \; \forall j=1,\ldots, n \right\}.
\end{equation}

The exclusion of measure for the large box does not possess the simple structure found in the small box regime. Instead of approximating polynomials via the Malgrange Preparation Theorem and excluding measure through derivative estimates (as in~\citep{bourgain1998quasi}), we adopt a strategy based on the so-called Cartan estimates for analytic functions, as introduced by~\citet{bourgain2005green}. For our purposes, we utilize a simplified version from~\citet{levin1996lectures}.

\begin{lemma}[Theorem 4 of Lecture 11.3 in~\citet{levin1996lectures}]
\label{lem: cartan}
Let $f(x)$ be a function analytic in the disk $\{ z: |z| \leq 2eR \}$, $|f(0)| = 1$, and let $\eta$ be an arbitrary small positive number. Then the estimate
\[
\log |f(z)| > - H(\eta) \log M_f(2eR), \qquad H(\eta) = \log \frac{15e^3}{\eta},
\]
is valid everywhere in the disk $\{z: |z| \leq R \}$ except a set disks $(C_j)$ with sum of radii
\[
\sum r_{j} \leq \eta R.
\]
\end{lemma}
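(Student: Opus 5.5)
The plan is the classical argument: factor out the zeros of $f$ near the disk $\{|z|\le R\}$, handle the zero-free part with Harnack-type estimates, and handle the polynomial part with Cartan's lemma on lower bounds for products $\prod |z-z_k|$. Throughout, write $M_f(r)=\max_{|z|=r}|f(z)|$ and $L=\log M_f(2eR)$. Since $|f(0)|=1$, the maximum principle gives $L\ge 0$; if $L=0$ then $f$ is a unimodular constant and there is nothing to prove.

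\textbf{Step 1 (counting zeros).} Let $z_1,\dots,z_p$ be the zeros of $f$ in $\{|z|\le 2R\}$, counted with multiplicity. Jensen's formula on $\{|z|=2eR\}$, together with $\log|f(0)|=0$, gives
\[
p\,\log\frac{2eR}{2R}\le \int_0^{2eR}\frac{n(t)}{t}\,dt\le L .
\]
Since $\log e=1$, this yields $p\le L$. This is exactly why the radius $2eR$ appears in the hypothesis.

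\textbf{Step 2 (splitting off the zeros).} Write
\[
f(z)=g(z)\prod_{k=1}^p\frac{(2R)^2-\overline{z_k}z}{2R\,(z-z_k)}\cdot\prod_{k=1}^p\frac{z-z_k}{2R},
\]
with the Blaschke factors taken relative to the disk $\{|z|\le 2R\}$. Then $g$ is analytic and zero-free there. On $|z|\le 2R$ each Blaschke factor has modulus at most $1$ in the appropriate sense, and the $\frac{z-z_k}{2R}$ factors are bounded by $1$ in $|z|\le R$ up to a factor $\tfrac32$. So it suffices to bound $\log|g|$ from below on $\{|z|\le R\}$ and $\prod|z-z_k|$ from below off the exceptional disks.

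For $g$, I would take $u=\log|g|$. This is harmonic in $|z|<2R$, satisfies $u(0)=\log|f(0)|+\sum_k\log\frac{2R}{|z_k|}\ge 0$, and has $u\le L+O(p)\le C_0L$ on $|z|=2R$. Applying Harnack's inequality to $C_0L-u\ge0$ on $\{|z|\le R\}$ (ratio of radii $1/2$, so the Harnack constant is $3$) gives $u\ge -C_1L$ there, with an absolute constant $C_1$.

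\textbf{Step 3 (Cartan's lemma — the main obstacle).} This is the step I expect to carry the real content. I would prove Cartan's product lemma directly: for any points $z_1,\dots,z_p$ and any $h>0$, the set where $\prod_{k=1}^p|z-z_k|\le h^p$ is covered by disks whose radii sum to at most $2eh$.

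The proof is the greedy argument. Let $\lambda_1$ be the largest integer such that some disk of radius $\lambda_1 h/p$ contains $\lambda_1$ of the points. Remove those points and repeat, obtaining disks $D_j$ with radii $\lambda_j h/p$ and $\sum_j\lambda_j=p$. Then double the radii. For $z$ outside the doubled disks, order the points by distance to $z$; the $k$-th closest point is at distance at least $kh/p$. Hence
\[
\prod_k|z-z_k|\ge \frac{p!\,h^p}{p^p}\ge (h/e)^p .
\]
Replacing $h$ by $eh$ gives the stated covering with total radius $\le 2eh$.

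\textbf{Step 4 (choosing $h$ and combining).} Choose $h=\eta R/(2e)$, so the exceptional disks have $\sum r_j\le \eta R$. Outside them,
\[
\sum_k\log\frac{|z-z_k|}{2R}\ge p\log\frac{\eta}{4e^2}\ge -L\log\frac{4e^2}{\eta},
\]
using $p\le L$. Adding the bounds from Steps 2 and 4 and absorbing the remaining constants gives $\log|f(z)|>-L\log\frac{15e^3}{\eta}$. The remaining bookkeeping is to track the constants from the Blaschke factors and the Harnack step so that they fit inside $15e^3$; this is routine.
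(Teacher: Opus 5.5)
Your scheme is the standard one. It is exactly the proof in Levin that the paper quotes without proving: Jensen on $|z|=2eR$ gives at most $L=\log M_f(2eR)$ zeros in $|z|\le 2R$; you divide by the Blaschke product of radius $2R$; Harnack (or Borel--Carath\'eodory) handles the zero-free quotient; and the Boutroux--Cartan lemma handles $\prod_k|z-z_k|$. As written, however, three steps do not go through.

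\textbf{Step 2.} Your displayed factorization is inverted. The middle factor $\frac{(2R)^2-\overline{z_k}z}{2R(z-z_k)}$ is the reciprocal of a Blaschke factor, so the last two products combine to $\prod_k\bigl(1-\overline{z_k}z/(4R^2)\bigr)$. That product has no zeros in $|z|<2R$, so your $g$ still vanishes at every $z_k$, and Harnack cannot be applied to $\log|g|$. What you need, and what your formula for $u(0)$ already presupposes, is $f=g\prod_kB_k$ with $B_k(z)=\frac{2R(z-z_k)}{(2R)^2-\overline{z_k}z}$. Then $|B_k|=1$ on $|z|=2R$, so $u\le L$ there with no $O(p)$ loss, and Harnack gives $u\ge -2L$ on $|z|\le R$. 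For the zero factors you need a \emph{lower} bound, not the upper bounds you quote. For $|z|\le R$ and $|z_k|\le 2R$ one has $|(2R)^2-\overline{z_k}z|\le 6R^2$, hence $|B_k(z)|\ge |z-z_k|/(3R)$.

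\textbf{Step 3.} The sentence ``the $k$-th closest point is at distance at least $kh/p$'' is the whole content of Cartan's lemma, and you only assert it. Let $D_j$ have center $c_j$, $j=1,\dots,m$. The argument needs two facts.
First, $\lambda_1\ge\lambda_2\ge\cdots$: the points remaining at stage $i+1$ are among those at stage $i$, so $\lambda_{i+1}>\lambda_i$ would contradict the maximality of $\lambda_i$.
Second, suppose the closed disk $D(z,kh/p)$ contained $k$ points. Then $k\le\lambda_1$, so you can take $i$ with $\lambda_i\ge k>\lambda_{i+1}$ (setting $\lambda_{m+1}:=0$). Maximality of $\lambda_{i+1}$ forces one of these points to lie in some $D_j$ with $j\le i$, whose radius is $\lambda_jh/p\ge kh/p$. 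Then $|z-c_j|\le 2\lambda_jh/p$, so $z$ lies in the doubled $D_j$.

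\textbf{Step 4.} The constants do not fit as you wrote them. With $h=\eta R/(2e)$, your version of Cartan gives $\prod_k|z-z_k|>h^p$, i.e.\ $\sum_k\log\frac{|z-z_k|}{2R}>p\log\frac{\eta}{4e}$, not $p\log\frac{\eta}{4e^2}$. You combined the rescaled radius $2eh$ with the unrescaled bound $(h/e)^p$. This slip is not harmless: $2+\log\frac32+\log\frac{4e^2}{\eta}=\log\frac{6e^4}{\eta}$, and $6e^4>15e^3$. With the corrections above, using $p\le L$ and $\eta\le 6e$, you get $\log|f(z)|\ge -2L+p\log\frac{\eta}{6e}\ge -L\log\frac{6e^3}{\eta}>-L\log\frac{15e^3}{\eta}$ for $L>0$.

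Finally, $L=0$ is not ``nothing to prove.'' For a unimodular constant the strict inequality reads $0>0$ and is false, so the lemma must be read for nonconstant $f$, or with $\ge$.
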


To complete the measure estimates, we consider the magnitude of the perturbation $\Phi$ in two separate regimes. Under the condition $\| \Phi \|_2 < \sqrt{ \varepsilon_{N}}$, the resonant set is contained within:
\begin{equation}
\label{eqn: small-exclude-large}
\Omega_{1, N} \subseteq \left\{ \pmb{\omega} \in \Omega \big | \left| \sigma_{\star} \right| <2 \sqrt{\varepsilon_{N}}, \; \forall j=1,\ldots, n \right\}. 
\end{equation}
In this regime, the measure estimate follows directly from the volume of the resonant slabs:
\begin{equation}
\label{eqn: single-mode-resonance-small-measure-large-1}
\mathrm{mes}(\Omega_{1, N, \pmb{k}_0} ) \leq 2n^2(2N+1)^{n-1} \sqrt{\varepsilon_{N}}.
\end{equation}
When $\| \Phi \|_2 \geq  \sqrt{\varepsilon_{N}}$, we define the analytic function as
\begin{equation}
\label{eqn: analytic-cartan}
f(\sigma_{\star}) = \frac{|\sigma_{\star}|}{\| \Phi \|_2} - 1
\end{equation}
with the disk parameter as $\| \Phi \|_2 = 2e(1+e)^{-1}R$. It follows that $|f(0)| = 1$ and $M_f(2eR) =e$. By setting $\eta = 15e^3\sqrt{ \varepsilon_N}$,~\Cref{lem: cartan} implies the following measure estimate as
\begin{equation}
\label{eqn: single-mode-resonance-small-measure-large-2}
\mathrm{mes}(\Omega_{1, N, \pmb{k}_0}) \leq \mathrm{mes} \left( \left\{ \pmb{\omega} \in \Omega \big | \left| f(\sigma_{\star}) \right| < \sqrt{\varepsilon_{N}} \right\} \right) \leq 8(1+e)e^2 \exp\left\{ (\log N)^{\frac{17}{10}} \right\} \sqrt{ \varepsilon_N }. 
\end{equation}
Combing the measure estimates~\eqref{eqn: single-mode-resonance-small-measure},~\eqref{eqn: single-mode-resonance-small-measure-large-1}, and~\eqref{eqn: single-mode-resonance-small-measure-large-2}, we establish the measure for the ``bad'' frequencies as below. 
\begin{theorem}
\label{thm: multi-scale-final}
For any $N \geq M_0$  and any box of size $N$ with center $\pmb{k}_0 \notin \Lambda_{2N}$, there exist a threshold $\varepsilon_0 > 0$ such that for all $\varepsilon \in (0, \varepsilon_0]$, if the operator $T + \varepsilon B$ restricted to the box $\pmb{k}_0 + \Lambda_{N}$ satisfy the inversion condition \eqref{eqn: trans-small-size-original-inverse}, then the measure of the ``bad'' frequencies set satisfies
\begin{equation}
\label{eqn: exclude-first-melnikov}
\mathrm{mes}(\Omega_{1,N, \pmb{k}_0}) \leq \varepsilon_{N}^{\frac14}.
\end{equation}
\end{theorem}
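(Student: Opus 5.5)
The plan is to combine the three measure estimates already derived, \eqref{eqn: single-mode-resonance-small-measure}, \eqref{eqn: single-mode-resonance-small-measure-large-1} and \eqref{eqn: single-mode-resonance-small-measure-large-2}, and show that in every regime each one is dominated by $\varepsilon_N^{1/4}$. The key structural point is that $\varepsilon_N=\exp\{-(\log N)^{15}\}$ decays faster than any exp-polylogarithmic factor of lower degree. Any prefactor of the form $(2N+1)^{n-1}$ or $\exp\{(\log N)^{17/10}\}$ can therefore be absorbed by sacrificing part of the power of $\varepsilon_N$. Concretely, I will bound each estimate in the form $C\cdot A_N\cdot \varepsilon_N^{1/2}$, with $A_N$ an explicit growth factor. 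Since $\varepsilon_N^{1/2}=\varepsilon_N^{1/4}\cdot \varepsilon_N^{1/4}$, it then suffices to prove
\[
C A_N \exp\left\{-\tfrac14(\log N)^{15}\right\}\leq 1 .
\]

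\textbf{Small box regime} ($M_0\le N\le M$). The estimate \eqref{eqn: single-mode-resonance-small-measure} gives $n^2(2N+1)^{n-1}\varepsilon_N$. Here $\varepsilon_N\le \varepsilon_N^{1/2}$, and
\[
n^2(2N+1)^{n-1}\le \exp\{(n+1)\log(3N)+2\log n\}.
\]
Because $N\ge M_0$ and $M_0\to\infty$ as $\varepsilon\to 0$ (via \eqref{eqn: M} and \eqref{eqn: small-size}), we can choose $\varepsilon_0$ small enough that $\tfrac34(\log N)^{15}\ge (n+1)\log(3N)+2\log n$ for all $N\ge M_0$. This single choice of $\varepsilon_0$ handles every $N$ uniformly.

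\textbf{Large box regime}, case $\|\Phi\|_2<\sqrt{\varepsilon_N}$. By \eqref{eqn: small-exclude-large} the set is contained in the union of slabs $|\langle \pmb{k}_\star,\pmb{\omega}'\rangle|<2\sqrt{\varepsilon_N}$. Each slab has width $O(\sqrt{\varepsilon_N}/|\pmb{k}_\star|)$, and by Proposition~\ref{prop: frequency-drift} the Jacobian $\partial\pmb{\omega}'/\partial\pmb{\omega}$ is within $\varepsilon$ of the identity, so pulling back to $\pmb{\omega}$ costs only a constant factor. Counting the admissible $\pmb{k}_\star$ in the box then gives \eqref{eqn: single-mode-resonance-small-measure-large-1}. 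The polynomial prefactor $2n^2(2N+1)^{n-1}$ is absorbed exactly as in the small-box case.

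\textbf{Large box regime}, case $\|\Phi\|_2\ge\sqrt{\varepsilon_N}$. This is the main obstacle. I would apply Lemma~\ref{lem: cartan} to the function $f$ of \eqref{eqn: analytic-cartan}, which requires checking three things.
\begin{itemize}
\item[(i)] $f$ depends analytically on $\sigma_\star$ in the disk of radius $2eR$. This means treating $\Phi$ as analytic in the spectral parameter; the Neumann/resolvent representation of $(T+\varepsilon B)_{\Xi_{J+1}\setminus\Pi}^{-1}$ from Theorem~\ref{thm: induction-multi-scale} and \eqref{eqn: ri-l2} persists on a complex neighbourhood.
\item[(ii)] The normalizations $|f(0)|=1$ and $M_f(2eR)=e$ hold.
\item[(iii)] With $\eta=15e^3\sqrt{\varepsilon_N}$, one has $H(\eta)\log M_f=\log(1/\sqrt{\varepsilon_N})$, so the exceptional set where $|f|<\sqrt{\varepsilon_N}$ is covered by disks of total radius at most $\eta R$.
\end{itemize}
Given these, I convert the sum of radii, a one-dimensional measure in $\sigma_\star$, into Lebesgue measure in $\pmb{\omega}$ via Fubini along the direction $\pmb{k}_\star$, again using the Jacobian bound \eqref{eqn: frequency-drift-derivative}. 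With $R\lesssim\|\Phi\|_2\le 2\exp\{(\log N)^{17/10}\}$, this yields \eqref{eqn: single-mode-resonance-small-measure-large-2}. Since $17/10<15$, the factor $8(1+e)e^2\exp\{(\log N)^{17/10}\}$ is at most $\exp\{\tfrac14(\log N)^{15}\}$ once $N>M$ is large, which again follows from choosing $\varepsilon_0$ small.

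Finally, taking the maximum of the three bounds gives $\mathrm{mes}(\Omega_{1,N,\pmb{k}_0})\le\varepsilon_N^{1/4}$ for all $N\ge M_0$, with $\varepsilon_0$ fixed by the three absorption inequalities above. The delicate point throughout is justifying the analyticity in (i) and the slab-to-measure conversion. I expect to handle both by using only the norm bound \eqref{eqn: ri-l2}, which holds by the inversion hypothesis of the statement.
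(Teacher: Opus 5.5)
Your proposal matches the paper's proof: the paper obtains the theorem by combining \eqref{eqn: single-mode-resonance-small-measure}, \eqref{eqn: single-mode-resonance-small-measure-large-1} and \eqref{eqn: single-mode-resonance-small-measure-large-2}, and your absorption of $(2N+1)^{n-1}$ and $\exp\{(\log N)^{17/10}\}$ into $\varepsilon_N^{-1/4}=\exp\{\tfrac14(\log N)^{15}\}$ for $N\geq M_0$ with $\varepsilon_0$ small is exactly what that step needs. (For $N>M$, the cases $\|\Phi\|_2<\sqrt{\varepsilon_N}$ and $\|\Phi\|_2\geq\sqrt{\varepsilon_N}$ split the $\pmb{\omega}$-set, so their bounds should be added rather than maximized; the slack absorbs the factor $2$.)

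One caution concerns your re-derivation of \eqref{eqn: single-mode-resonance-small-measure-large-2}, which reproduces the paper's own step. The function $f(\sigma_\star)=|\sigma_\star|/\|\Phi\|_2-1$ is real-valued and non-constant, hence not holomorphic, so no norm bound such as \eqref{eqn: ri-l2} can supply your item (i) for Lemma~\ref{lem: cartan}. A rigorous Cartan step would need a genuinely analytic object, e.g.\ $\det U$ as a function of a complexified spectral parameter. On that point you inherit, rather than close, a gap in the paper's derivation.
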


\section{Proof of the main theorem}
\label{sec: completion}

In this section, we present the formal proof of the main statement. We begin in~\Cref{cond: implementation} by verifying~\Cref{subsec: verification} via the pave method and extending these results to the derivatives. Subsequently,~\Cref{subsec: induction} provides the inductive estimate. Finally, we apply the iteration lemma to complete the proof.

\subsection{Verification of~\Cref{cond: implementation}}
\label{subsec: verification}

We proceed by induction on the size $N$. Following~\Cref{thm: small-size-original} as the base case, which satisfies~\Cref{cond: implementation}, we now consider the inverse operator $T_{N}^{-1}$ for $M^{r} < N \leq M^{r+1}$. Assume the result holds for $M_0 \leq N \leq M^r$, we set the parameters $K = N^{1/10}$ and consider a central box with size $10K$. Let $\Gamma_{\alpha} = \pmb{k}_{0, \alpha} + \Lambda_K$ to be a collection of boxes such that $\mathbf{k}_{0, \alpha} \notin \Lambda_{5K}$. The set $\Lambda_{N}$ can then be decomposed as: 
\begin{equation}
\label{eqn: box-seperation}
\Lambda_{N} = \Lambda_{10K} \bigcup \bigg( \bigcup _{\alpha} \Gamma_{\alpha} \bigg)
\end{equation}
Since the total number of the lattice points in $\Gamma_{\alpha} $ is at most $(2K^{10}+1)^n$, we apply~\Cref{prop: tensor} and~\Cref{thm: multi-scale-final} to estaimte the single-mode nearly-resonant set.  Specifically, by summing the measures of the ``bad'' frequencies associated with each localized box at scale $K$, we obtain: 
\begin{equation}
\label{eqn: 1-melnikov-k}
\mathrm{mes}(\Omega_{1, K}) \leq \sum_{\alpha} \mathrm{mes}(\Omega_{1, K, \pmb{k}_{0, \alpha}}) \leq (2K^{10}+1)^n \varepsilon_{K}^{\frac14}. 
\end{equation}
Thus, the total measure for the single-mode nearly-resonant set can be estimated by:
\begin{equation}
\label{eqn: 1-melnikov-total}
\mathrm{mes}(\Omega_1) \leq \sum_{K = M_0}^{\infty} (2K^{10}+1)^n \exp\left\{ \frac{(\log K)^{15}}{4} \right\}. 
\end{equation}
From~\Cref{defn: second-melnikov}, we can derive the total measure of the difference nearly-resonant set as:
\begin{equation}
\label{eqn: 2-melnikov-total}
\mathrm{mes}(\Omega_2) \leq \sum_{K=M_0}(2K^{10}+1)^n \exp\left\{ (\log K)^{\frac{3}{2}} \right\}.  
\end{equation}
Combining~\eqref{eqn: 1-melnikov-total} and~\eqref{eqn: 2-melnikov-total}, we can choose the threshold $\varepsilon_0 > 0$ such that the total measure of ``bad'' frequencies, $\mathrm{mes}(\Omega_1 \cup \Omega_{2})$, is sufficiently small to satisfy the desired requirement.

Finally, we employ the resolvent identity to ``glue" the inverse operators of the small boxes to the large box $\lambda_N$, thereby establishing the bounded inverse and the off-diagonal decay of its entries. For the central box $\Lambda_{10K}$, the resolent indentity reads
\begin{equation}
\label{eqn: 10k-box}
|T_{N}^{-1}(\pmb{k}_1, \pmb{k}_2)|  \leq |T_{ \Lambda_{10K}}^{-1}(\pmb{k}_1, \pmb{k}_2)| + \sum_{\pmb{k}_3 \in \Lambda_{10K}}\sum_{\pmb{k}_4 \in \Lambda_{N} \setminus  \Lambda_{10K}} |T_{ \Lambda_{10K}}^{-1}(\pmb{k}_1, \pmb{k}_3)| |T(\pmb{k}_3, \pmb{k}_4)| |T_{ \Lambda_{10K}}^{-1}(\pmb{k}_4, \pmb{k}_2)|,  
\end{equation}
where $\pmb{k}_4 \in \Lambda_{N}/\Lambda_{10K}$. Although $|T_{ \Lambda_{10K}}^{-1}(\pmb{k}_1, \pmb{k}_3)|$ must satisfy the two-side off-digonal decay,  $T(\pmb{k}_3, \pmb{k}_4)$ satisfies  the single-side off-diagnol decay as~\Cref{thm: small-scale-tran} and~\Cref{thm: induction-multi-scale}. For the intermediate boxs $\Gamma_{\alpha}$, the resolvent identity reads
\begin{equation}
\label{eqn: k-box}
|T_{N}^{-1}(\pmb{k}_1, \pmb{k}_2)|  \leq |T_{ \Gamma_{\alpha}}^{-1}(\pmb{k}_1, \pmb{k}_2)| + \sum_{\pmb{k}_3 \in \Gamma_{\alpha}}\sum_{\pmb{k}_4 \in \Lambda_{N} \setminus  \Gamma_{\alpha}}\ |T_{ \Gamma_{\alpha}}^{-1}(\pmb{k}_1, \pmb{k}_3)| |T(\pmb{k_3}, \pmb{k}_4)| |T_{ \Gamma_{\alpha}}^{-1}(\pmb{k}_4, \pmb{k}_2)|,  
\end{equation}
where $\pmb{k}_4 \in \Lambda_{10K}$. In this case, $T_{\Gamma_{\alpha}}^{-1}(\pmb{k_3}, \pmb{k}_4)$ satisfies the single-side off-diginal decay. The remaining steps for verifying~\Cref {cond: implementation} involve standard inequality manipulations, which are deferred to~\Cref{subsec: implement-left}.

\subsection{The iteration lemma}
\label{subsec: induction}

Before proceeding to the inductive estimates, we must characterize the derivative of the inverse operator $\partial(T + \epsilon B)_N^{-1}$, specifically regarding its norm bounds and off-diagonal decay properties. Using the standard operator identity $(T + \epsilon B)_N^{-1} (T + \epsilon B)_N = I$, the derivative of the inverse is given by:
\begin{equation}
\label{eqn: t-n-derivative-inverse}
\partial (T + \varepsilon B)_{N}^{-1} = - (T + \varepsilon B)_{N}^{-1}(\partial (T + \varepsilon B)_{N} ) (T + \varepsilon B)_{N}^{-1}.
\end{equation}
This relationship allows us to establish the following lemma regarding the operator's composite norm and spatial localization.
\begin{lemma}
\label{lem: inverse-derivative}
If the restricted operator $(T + \varepsilon B)_{N}$ satisfies~\Cref{cond: implementation}, then its inverse satisfies the norm bound:
\begin{equation}
\label{eqn: inverse-restricted-norm}
\| \partial (T + \varepsilon B)_{N}^{-1} \|_2 \leq \frac{4N}{\varepsilon_N^2}. 
\end{equation}
Furthermore, for sufficiently large spatial separations in the lattice $\Lambda_N$, specifically when $|\pmb{k} \pm \pmb{k}'| \geq N^{\frac34}$ the entries of the inverse matrix must exhibit Gevrey-type decay:
\begin{equation}
\label{eqn: inverse-restricted-entries}
| \partial (T + \varepsilon B)_{N}^{-1}\left( \pmb{k}, \pmb{k}' \right) | \leq \exp\left\{ - \frac{|\pmb{k} - \pmb{k}'|^{s}}{4}\right\} +  \exp\left\{ - \frac{|\pmb{k} + \pmb{k}'|^{s}}{4}\right\}.
\end{equation}
\end{lemma}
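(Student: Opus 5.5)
The plan is to start from the identity \eqref{eqn: t-n-derivative-inverse} and control the middle factor $\partial (T+\varepsilon B)_N$, where $\partial$ is the derivative with respect to the frequency $\pmb{\omega}$ (consistent with the composite norm $\|\cdot\|$). Differentiating \eqref{eqn: lin-op} and \eqref{eqn: rank-1} entrywise gives
\[
\partial (T+\varepsilon B)_N = \partial D_N + \varepsilon\, \partial (S+B)_N .
\]
The diagonal part has entries $-\langle \pmb{k}, \partial_{\pmb{\omega}}\pmb{\omega}'\rangle + \pmb{e}_j$, so by \eqref{eqn: frequency-drift-derivative} its $\ell_2$ operator norm is at most $(1+\varepsilon)N + 1 \le 2N$ on $\Lambda_N$. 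For the off-diagonal part I use the chain rule $\partial S = (\partial S/\partial \widehat{\pmb{z}}_p)\,\partial_{\pmb{\omega}}\widehat{\pmb{z}}_p$. \Cref{prop: tensor} bounds the tensor by $\gamma_8$, and $\|\partial_{\pmb{\omega}}\widehat{\pmb{z}}\|_2 \le 1$ since $\widehat{\pmb{z}}\in\mathcal{K}(s)$ with the composite norm. The rank-one operator $B$ is handled in the same way, using \Cref{prop: B-operator} together with the product rule. Altogether $\|\partial(T+\varepsilon B)_N\|_2 \le 2N + 2\varepsilon\gamma \le 4N$. Combining this with the inversion condition \eqref{eqn: inverse-grow} applied to both outer factors yields \eqref{eqn: inverse-restricted-norm} immediately.

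For the entrywise bound \eqref{eqn: inverse-restricted-entries}, I expand \eqref{eqn: t-n-derivative-inverse} as
\[
\partial (T+\varepsilon B)_N^{-1}(\pmb{k},\pmb{k}') = -\sum_{\pmb{k}_3,\pmb{k}_4\in\Lambda_N} (T+\varepsilon B)_N^{-1}(\pmb{k},\pmb{k}_3)\,\partial(T+\varepsilon B)_N(\pmb{k}_3,\pmb{k}_4)\,(T+\varepsilon B)_N^{-1}(\pmb{k}_4,\pmb{k}') .
\]
The middle kernel is itself localized. Its diagonal part is supported on $\pmb{k}_3=\pmb{k}_4$ with size $\le 2N$. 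Its off-diagonal part has the Gevrey decay $\gamma\varepsilon\,(e^{-|\pmb{k}_3-\pmb{k}_4|^s}+e^{-|\pmb{k}_3+\pmb{k}_4|^s})$, which I obtain by differentiating the convolution representation behind \Cref{prop: convolution}; differentiating in $\pmb{\omega}$ does not destroy Gevrey decay of a convolution. I then split the double sum according to whether $|\pmb{k}-\pmb{k}_3|$ (or $|\pmb{k}+\pmb{k}_3|$) and $|\pmb{k}_4-\pmb{k}'|$ (or $|\pmb{k}_4+\pmb{k}'|$) exceed $N^{1/2}$. When both exceed it, the localization condition \eqref{eqn: off-diagonal} gives decay $e^{-|\cdot|^s/2}$ in each factor. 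The triangle inequality together with subadditivity $|a+b|^s\le|a|^s+|b|^s$ ($s<1$) then produces a total decay of at least $e^{-|\pmb{k}\mp\pmb{k}'|^s/2}$, up to a polynomial factor $(2N+1)^{2n}\cdot 2N$ from counting. When one of them is below $N^{1/2}$, I use the crude bound $1/\varepsilon_N$ for that factor. The remaining distance is then at least $|\pmb{k}\pm\pmb{k}'| - 2N^{1/2} \ge \tfrac12|\pmb{k}\pm\pmb{k}'|$, because $|\pmb{k}\pm\pmb{k}'|\ge N^{3/4}$, and the other factors still supply decay $e^{-(|\pmb{k}\pm\pmb{k}'|/2)^s/2}$.

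The main obstacle is absorbing the prefactors, namely $N^{O(n)}$ and especially the $1/\varepsilon_N^{2}=\exp\{2(\log N)^{15}\}$ losses in the near-diagonal cases, into the weakening of the exponent from $1/2$ to $1/4$. Here the threshold $|\pmb{k}\pm\pmb{k}'|\ge N^{3/4}$ is essential. It gives $|\pmb{k}\pm\pmb{k}'|^s \ge N^{3s/4}$, and the needed inequality is $\exp\{(\tfrac12-\tfrac14)2^{-s}N^{3s/4}\}\gg N^{C n}\exp\{2(\log N)^{15}\}$. This holds once $N$ is large compared to $\exp\{(C/s)^{C}\}$, and that is guaranteed because $N\ge M_0$ and $s=s(\varepsilon)$ is tied to $M$ through \eqref{eqn: M}. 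I expect to verify this bookkeeping carefully and, if it fails for the smallest scales, to treat those scales directly via the Neumann-series bounds of \Cref{thm: small-size-original}, where the diagonal lower bound \eqref{eqn: uniform-lower-original-small} gives much better constants.
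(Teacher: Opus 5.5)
Your route matches the paper's. You differentiate through \eqref{eqn: t-n-derivative-inverse}, bound $\|\partial(T+\varepsilon B)_N\|_2=O(N)$ for the norm estimate, and expand the double sum for the entries. You then use $1/\varepsilon_N$ near the diagonal and \eqref{eqn: off-diagonal} away from it, and absorb every prefactor by weakening the exponent from $1/2$ to $1/4$ on $|\pmb{k}\pm\pmb{k}'|\geq N^{3/4}$. The paper only asserts this last absorption, and your justification of it does not hold. The proof of \Cref{lem: iteration} requires $95M^{s}\leq 96$, i.e.\ $s\leq \log(96/95)/\log M$. Writing $N=M^{L}$, every $x=|\pmb{k}\pm\pmb{k}'|\leq 2nN$ satisfies $x^{s}\leq (2n)^{s}(96/95)^{L}$. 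So for bounded $L$ the weights $e^{-x^{s}/2}$ and $e^{-x^{s}/4}$ differ only by a factor $e^{O(1)}$. Meanwhile, the terms where $\partial(T+\varepsilon B)_N$ contributes its $\varepsilon$-free diagonal part (entries up to $\sim N$) carry a counting loss $N^{O(n)}$. When an outer factor is near-diagonal they carry a further loss $\varepsilon_N^{-1}=\exp\{(\log N)^{15}\}$. Absorbing these requires $N^{3s/4}\gtrsim(\log N)^{15}$, i.e.\ $\log N\gtrsim s^{-1}\log(1/s)$, i.e.\ $N\gtrsim M^{c\log\log M}$. That is vastly larger than $M_0=\exp\{(\log M)^{1/20}\}$; your own threshold $\exp\{(C/s)^{C}\}$ already exceeds $M$. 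So ``$N\geq M_0$ guarantees it'' is false. Shrinking $\varepsilon_0$ does not help, because the bad window scales with $M$. Your fallback, \Cref{thm: small-size-original}, only reaches $N\leq M$. For $M<N\lesssim M^{c\log\log M}$ the entrywise bound is therefore not established, and \Cref{cond: implementation} alone gives nothing better than the $\varepsilon_N^{-1}$ bound on near-diagonal entries of the inverse.

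The missing ingredient is the smallness of $\varepsilon$ at these scales. Throughout the window $\varepsilon=\exp\{-(\log M)^{20}\}$ is far below $\varepsilon_N^{C}N^{-C}$, but this helps only if every off-diagonal contribution visibly carries a factor $\varepsilon$, which \Cref{cond: implementation} does not record. One way to close the gap is to extend the Diophantine exclusion from $\Lambda_{2(M+1)}$ to $\Lambda_{2(N_*+1)}$ with $N_*=\exp\{c(n,\tau)(\log M)^{20}\}$. Its measure is still at most $\kappa$, since the proof of \Cref{lem: mes-dig} sums over all of $\mathbb{Z}^n$. Then \eqref{eqn: uniform-lower-original-small} and the Neumann series of \Cref{thm: small-size-original} remain valid for all $N\leq N_*$. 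This gives $(T+\varepsilon B)_N^{-1}=D_N^{-1}+R_N$ with $|R_N(\pmb{k},\pmb{k}')|\lesssim \varepsilon N^{C}\big(e^{-|\pmb{k}-\pmb{k}'|^{s}}+e^{-|\pmb{k}+\pmb{k}'|^{s}}\big)$; a product of two such kernels costs only a factor $(2N+1)^n$, by subadditivity of $x\mapsto x^{s}$. Hence $\partial(T+\varepsilon B)_N^{-1}=-D_N^{-1}(\partial D_N)D_N^{-1}$ plus a kernel bounded by $\varepsilon N^{C'}$ times the same two-sided decay. The first term is diagonal, so \eqref{eqn: inverse-restricted-entries} is immediate when $\pmb{k}\neq\pm\pmb{k}'$. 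For $N>N_*$, taking $s\asymp 1/\log M$ (the largest value the iteration lemma allows), one has $\log N\geq c(\log M)^{20}\gg s^{-1}\log(1/s)$, and your absorption argument works as written.

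Two smaller points. On the $\ell^\infty$-box $\Lambda_N$ the diagonal of $\partial D_N$ reaches $(1+\varepsilon)\sqrt{n}\,N+1$, not $2N$, so the constant $4N$ needs an $n$-dependent factor; the paper glosses over this as well. Also, $\widehat{\pmb{z}}\in\mathcal{K}(s)$ only gives a weighted sup bound on $\partial_{\pmb{\omega}}\widehat{\pmb{z}}$, not $\|\partial_{\pmb{\omega}}\widehat{\pmb{z}}\|_2\leq 1$. This is harmless here only because of the $\varepsilon$ prefactor.
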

The inversion result~\eqref{eqn: inverse-restricted-norm} follows directly from the inversion condition~\eqref{eqn: inverse-grow} and the derivative identity~\eqref{eqn: t-n-derivative-inverse}. The localization result~\eqref{eqn: inverse-restricted-entries} involves basic inequality derivations regarding Gevrey classes; the full technical details are deferred to~\Cref{subsec: derivative-app}. With the verification of~\Cref{cond: implementation} and~\Cref{lem: inverse-derivative}, we establish the following rigorous iteration lemma.  For technical convenience, we adopt the convention that $N_{-1} = 0$ and $\Lambda_{N_{-1}} = \varnothing$.

\begin{theorem}[The Iteration Lemma]
\label{lem: iteration}
Assume that the frequencies satisfy $\pmb{\omega} \in \Omega \setminus (\Omega_{M}^{\tau} \cup \Omega_1 \cup \Omega_2)$. Let $\varepsilon_0 = \varepsilon_0(H_1, \Omega, \pmb{a}) > 0$ be the threshold provided in~\Cref{thm: main}. For any $0 < \varepsilon \leq \varepsilon_0$, there exists a Gevrey exponent $s(\varepsilon) > 0$ such that, at the $r$-th iteration,  the approximate solution $ \widehat{\pmb{z}}^{(r)}$ is supported on:  
\begin{equation}
\label{eqn: vector-support-r}
\mathrm{supp} \;  \widehat{\pmb{z}}^{(r)} \subseteq \Lambda_{N_r} = \Lambda_{M^r}. 
\end{equation}
 Moreover, for any index $m= \{ 0, 1, \ldots, r\}$ and any $\pmb{k}  \in \Lambda_{M^m} \setminus \Lambda_{M^{m-1}}$, assume the following Gevrey decay property holds: 
\begin{equation}
  \label{eqn: vector-decay-r}   
 \|  \widehat{\pmb{z}}^{(r)}(\pmb{k} ) \| \leq \sum_{\ell=m}^{r}\exp\left\{ - \frac32 N_{\ell}^{s} \right\},
\end{equation}
 the vector field $F$ satisfies: 
\begin{equation}
 \label{eqn: vector-field-decay-r}  
\big\| F\big( \widehat{\pmb{z}}^{(r)}_p; \;\pmb{\omega}^{(r+1)}, \pmb{a} \big) \big\| \leq  \exp\left\{-2N_{r}^{s}\right\},                                                                                                                                                                                                                                                                   
\end{equation}
Then, after updating the frequency via $Q$-equations~\eqref{eqn: q-eqn-numerical} and the coefficients via the dimension-enlarged Newton scheme~\eqref{eqn: p-eqn-numerical}, these properties are preserved at the $(r+1)$-th iteration. Specifically, the update iterate $ \widehat{\pmb{z}}^{(r+1)} $ satisfies: 
\begin{equation}
\label{eqn: vector-support-r+1}
\mathrm{supp} \;  \widehat{\pmb{z}}^{(r+1)} \subseteq [-N_{r+1}, N_{r+1}]^n. 
\end{equation}
Furthermore, for any $m \in \{ 0, 1, \ldots, r+1\}$ and any $\pmb{k}  \in \Lambda_{M^m} \setminus \Lambda_{M^{m-1}}$, the following decay property holds: 
\begin{equation}
  \label{eqn: vector-decay-r+1}   
   \|  \widehat{\pmb{z}}^{(r+1)}(\pmb{k} ) \| \leq \sum_{\ell=m}^{r+1}\exp\left\{ - \frac32 N_{\ell}^s\right\},
\end{equation}
 the vector field $F$ satisfies: 
\begin{equation}
 \label{eqn: vector-field-decay-r+1}  
 \big\| F\big( \widehat{\pmb{z}}_p^{(r+1)}; \;\pmb{\omega}^{(r+2)}, \pmb{a} \big) \big\| \leq  \exp\left\{-2N_{r+1}^{s}\right\}.                                                                                                                                                                                                                                                                 
\end{equation}
\end{theorem}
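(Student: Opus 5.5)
The plan is the standard Nash--Moser/CWB induction: one Newton step, using the bounds on the truncated inverse to control the size and decay of the correction, then Taylor's formula to control the new residual. First, the support property \eqref{eqn: vector-support-r+1} is immediate from the construction. The correction $\Delta^{(r+1)} := \widehat{\pmb{z}}_p^{(r+1)} - \widehat{\pmb{z}}_p^{(r)} = -(T+\varepsilon B)_{N_{r+1}}^{-1} F$ is produced by an operator of the form $P_{N_{r+1}}(\cdot)P_{N_{r+1}}$. The previous iterate is already supported in $\Lambda_{N_r} \subseteq \Lambda_{N_{r+1}}$. Before using the inverse at all, I check that $\widehat{\pmb{z}}^{(r)} \in \mathcal{K}(s)$. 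Summing \eqref{eqn: vector-decay-r} over $\ell \ge m$ gives $\|\widehat{\pmb{z}}^{(r)}(\pmb{k})\| \le 2\exp\{-\tfrac32 N_m^s\}$. For $\pmb{k} \in \Lambda_{N_m}$ we have $|\pmb{k}| \le nN_m$; absorbing the factor $n^s$ requires choosing $s$ small relative to $\log M$, which is exactly why $s=s(\varepsilon)$. Then Propositions~\ref{prop: vector-field}--\ref{prop: frequency-drift} apply. Together with the frequency exclusion $\pmb{\omega} \notin \Omega_M^\tau \cup \Omega_1 \cup \Omega_2$, the verification in Subsection~\ref{subsec: verification} shows that $(T+\varepsilon B)_{N_{r+1}}$ satisfies Condition~\ref{cond: implementation}, and Lemma~\ref{lem: inverse-derivative} supplies the matching bounds for the $\partial_{\pmb{\omega}}$-derivatives needed in the composite norm.

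Second, the size and decay of the correction. The $\ell_2$ bound is
\[
\|\Delta^{(r+1)}\| \le \varepsilon_{N_{r+1}}^{-1}\|F\| + \|\partial(T+\varepsilon B)^{-1}_{N_{r+1}}\|\,\|F\| \le \frac{8N_{r+1}}{\varepsilon_{N_{r+1}}^2} e^{-2N_r^s},
\]
and since $\varepsilon_{N_{r+1}}^{-2}=\exp\{2(\log N_{r+1})^{15}\} \ll e^{N_r^s/4}$ for $N_r \ge M$ large, this is at most $e^{-\frac74 N_r^s}$. For the pointwise decay \eqref{eqn: vector-decay-r+1}, I split by shells: take $\pmb{k} \in \Lambda_{M^m}\setminus\Lambda_{M^{m-1}}$. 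For $m \le r$ it suffices that $|\Delta^{(r+1)}(\pmb{k})| \le e^{-\frac32 N_{r+1}^s}$. For $m=r+1$ the new entries are exactly $\Delta^{(r+1)}(\pmb{k})$, which must be at most $e^{-\frac32 N_{r+1}^s}$. Both reduce to the one estimate
\[
|\Delta^{(r+1)}(\pmb{k})| \le \sum_{\pmb{k}'}|(T+\varepsilon B)^{-1}_{N_{r+1}}(\pmb{k},\pmb{k}')|\,|F(\pmb{k}')| .
\]
Here I use the localization \eqref{eqn: off-diagonal} for $|\pmb{k}\pm\pmb{k}'|\ge N_{r+1}^{1/2}$ and the crude bound $\varepsilon_{N_{r+1}}^{-1}$ otherwise. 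The decay of $F(\pmb{k}')$ itself comes from its convolution structure: $\widehat{\pmb{X}}$ of a $\Lambda_{N_r}$-supported vector in $\mathcal{K}(s)$ decays like $e^{-|\pmb{k}'|^s}$ by Proposition~\ref{prop: vector-field}. The key identity is $N_{r+1}^s = M^s N_r^s$. With $s$ so small that $M^s \le 4/3$, the available $e^{-2N_r^s}$ beats $e^{-\frac32 N_{r+1}^s}$ with room to absorb the polynomial-in-$N_{r+1}$ and $\varepsilon_{N_{r+1}}^{-1}$ losses. I expect this tuning of $s$ against $M$ and $\varepsilon_N$ to be delicate but routine.

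Third, and the main obstacle, is the new residual \eqref{eqn: vector-field-decay-r+1}. I Taylor expand $F(\widehat{\pmb{z}}_p^{(r+1)};\pmb{\omega}^{(r+2)})$ around $(\widehat{\pmb{z}}_p^{(r)};\pmb{\omega}^{(r+1)})$, which splits the residual into three pieces.
\begin{itemize}
\item[(i)] The truncation error $(1-P_{N_{r+1}})F$ together with the off-box part $(T+\varepsilon B)\Delta - (T+\varepsilon B)_{N_{r+1}}\Delta$. This is of size $e^{-N_{r+1}^s}$ up to constants, by Gevrey decay of $F$ and of $S$ (Proposition~\ref{prop: convolution}), since $\Delta$ is supported in $\Lambda_{N_{r+1}}$.
\item[(ii)] The quadratic remainder, bounded by $\varepsilon\gamma_8\|\Delta\|^2 \le e^{-\frac72 N_r^s}$ via Proposition~\ref{prop: tensor}.
\item[(iii)] The frequency mismatch $\pmb{\omega}^{(r+2)}-\pmb{\omega}^{(r+1)}$, entering through $-\langle\pmb{k},\cdot\rangle\widehat{\pmb{z}}_p$.
\end{itemize}
Term (iii) is the delicate one. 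By \eqref{eqn: q-eqn-numerical}, $\pmb{\omega}^{(r+2)}-\pmb{\omega}^{(r+1)} = \varepsilon a^{-1}\odot(\widehat{\pmb{X}}_q(\widehat{\pmb{z}}^{(r+1)})-\widehat{\pmb{X}}_q(\widehat{\pmb{z}}^{(r)}))$, whose linear part is $\varepsilon e\,(\partial\widehat{\pmb{X}}_q/\partial\widehat{\pmb{z}}_p)\Delta$ because $a_j=e^{-1}$. The rank-one operator $B$ in \eqref{eqn: rank-1} is designed precisely so that $\varepsilon B\Delta$ cancels this first-order frequency-induced term. I would verify this cancellation algebraically first. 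After cancellation, the leftover is quadratic in $\Delta$, controlled again by Proposition~\ref{prop: tensor}, and multiplied by $|\pmb{k}|\le nN_{r+1}$.

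Summing (i)--(iii) gives $\|F^{(r+1)}\| \le C N_{r+1}^{2}\bigl(e^{-N_{r+1}^s}+e^{-\frac72 N_r^s}\bigr)$. This is where the argument can fail. The bound $e^{-2N_{r+1}^s}$ requires $\frac72 N_r^s \ge 2M^sN_r^s$, i.e.\ $M^s\le 7/4$, which the choice of $s$ gives. But term (i) yields only $e^{-N_{r+1}^s}$, which is too weak by itself. So I would sharpen (i) by using that $F^{(r)}$ and $S$ inherit Gevrey decay with exponent at least $2$ times better outside $\Lambda_{N_{r+1}}$: entries there pay $e^{-|\pmb{k}|^s}$ with $|\pmb{k}| \ge N_{r+1}$, and the $\ell_2$-sum over the exterior region, weighted appropriately, gives an extra factor. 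If needed, I would instead truncate with a slightly larger box, or reduce the target in \eqref{eqn: vector-field-decay-r+1} by adjusting constants in $\mathcal{K}(s)$. The $\partial_{\pmb{\omega}}$ part of the composite norm is handled identically, using Proposition~\ref{prop: frequency-drift} for the chain rule and Lemma~\ref{lem: inverse-derivative}.
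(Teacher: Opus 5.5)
\textbf{There is a genuine gap, in term (i) of your third step.} Your first two steps agree with the paper. For the pointwise decay, the paper simply uses $|\Delta(\pmb{k})|\le\|\Delta\|\le e^{-\frac74 N_r^s}\le e^{-\frac32 N_{r+1}^s}$ with $6M^s\le 7$, so your shell-by-shell localization argument is not needed there. Your reading of $B$ as cancelling the first-order frequency-induced term is also what the paper's ``Taylor expansion in the vector field and the frequency'' tacitly relies on.

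The problem is the truncation error, which you flag yourself. As you set it up, you only obtain an \emph{absolute} smallness of order $e^{-N_{r+1}^s}$, coming from the Gevrey decay of $F$ and $S$. That bound is capped by the normalization of $\mathcal{K}(s)$ and so can never reach $e^{-2N_{r+1}^s}$.

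It is also not justified as stated. The quantity $(I-P_{N_{r+1}})(T+\varepsilon B)P_{N_{r+1}}\Delta$ only sees the entries of $\Delta$ near the boundary of $\Lambda_{N_{r+1}}$. There $S$ couples sites across the boundary with $O(\gamma)$ strength. So the decay of $S$ alone gives only $\varepsilon\gamma\|\Delta\|\approx e^{-\frac74 N_r^s}$, which is much larger than $e^{-2N_{r+1}^s}$.

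None of your proposed repairs work:
\begin{itemize}
\item Propositions~\ref{prop: convolution}--\ref{prop: B-operator} give decay $e^{-|\pmb{k}-\pmb{k}'|^s}$, not a doubled exponent.
\item The box $\Lambda_{N_{r+1}}$ is fixed by the scheme, and enlarging it merely rescales the problem.
\item Weakening the target to $e^{-cN_{r+1}^s}$, with the same form in the hypothesis, does not help. Since $e^{-cN_{r+1}^s}=e^{-cM^sN_r^s}\ll e^{-cN_r^s}$, you would still need the truncation error to be smaller than $\|F^{(r)}\|$ by a factor $e^{-c'N_{r+1}^s}$.
\end{itemize}

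\textbf{The missing idea} is that the truncation error must be bounded by $\|F^{(r)}\|$ \emph{times} a localization gain. Two facts give this.
\begin{itemize}
\item $H_1$ is a polynomial of degree $d$ and $\widehat{\pmb{z}}^{(r)}$ is supported in $\Lambda_{N_r}$, so $F^{(r)}$ is supported in $\Lambda_{dN_r}\subseteq\Lambda_{N_{r+1}/4}$. In particular, your term $(1-P_{N_{r+1}})F$ is identically zero.
\item The truncated inverse is localized (Condition~\ref{cond: implementation}(2) and Lemma~\ref{lem: inverse-derivative}). Hence $\Delta$ is tiny outside $\Lambda_{N_{r+1}/3}$, relative to $\|F^{(r)}\|$.
\end{itemize}
The paper splits
\begin{align*}
(I-P_{N_{r+1}})(T+\varepsilon B)\Delta
&= (I-P_{N_{r+1}})(T+\varepsilon B)P_{N_{r+1}}\,(I-P_{N_{r+1}/3})\Delta \\
&\quad + (I-P_{N_{r+1}})(T+\varepsilon B)P_{N_{r+1}/3}\,\Delta .
\end{align*}
The first term is bounded by $\|(I-P_{N_{r+1}/3})(T+\varepsilon B)^{-1}_{N_{r+1}}P_{N_{r+1}/4}\|\,\|F^{(r)}\|$. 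The second is bounded by $\|(I-P_{N_{r+1}})(T+\varepsilon B)P_{N_{r+1}/3}\|\,\|\Delta\|$. Up to factors such as $\varepsilon_{N_{r+1}}^{-1}$, which the paper absorbs, each is of size
\[
e^{-cN_{r+1}^s}\,e^{-2N_r^s}=e^{-(c+2M^{-s})N_{r+1}^s}.
\]
It is this product with $e^{-2N_r^s}$ that beats $e^{-2N_{r+1}^s}$ once $M^s$ is close enough to $1$; the paper takes $95M^s\le 96$ with $c=1/48$.

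You already have the localization tool from your second step. In (i), however, you replaced $|F(\pmb{k}')|$ by its absolute Gevrey bound, and that discarded the essential factor $e^{-2N_r^s}$.
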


\begin{proof}[Proof of~\Cref{lem: iteration}]

For conciseness, we suppress the auxiliary parameters from the dimension-enlarged Newton scheme~\eqref{eqn: p-eqn-numerical} and define the iterative update as:
\begin{equation}
\label{eqn: nash-moser-single-simplification}
\Delta^{(r)}  = \widehat{\pmb{z}}^{(r+1)}_p - \widehat{\pmb{z}}^{(r)}_p  = - (T + \varepsilon B)_{N_{r+1}}^{-1}  \big(  \widehat{\pmb{z}}^{(r)}_p \big) F\big(  \widehat{\pmb{z}}^{(r)}_p \big).
\end{equation}
The proof is organized in the following three steps. 

\paragraph{Support enlargement} This step verifies the inclusion property~\eqref{eqn: vector-support-r+1}. Given the perturbation Hamiltonian $H_1$ is a polynomial with the total degree at most $d$, the vector field is located in the support as
\begin{equation}
\label{eqn: support-d-enlarge}
\mathrm{supp} \; F\big(  \widehat{\pmb{z}}^{(r)}_p \big) \subseteq [-dN_{r}, dN_{r}]^n \subseteq \left[ -\frac{N_{r+1}}{4}, \frac{N_{r+1}}{4} \right]^n,
\end{equation}
which confirms~\eqref{eqn: vector-support-r+1} in accordance with the update rule defined in~\eqref{eqn: nash-moser-single-simplification}. 

\paragraph{Lattice vector decay} This stepestablishes the bound in~\eqref{eqn: vector-decay-r+1}. Given the inversion condition~\eqref{eqn: inverse-grow} and the previous vector field estimate~\eqref{eqn: vector-field-decay-r}, we bound the iterative difference as:
\[
\| \Delta^{(r)} \|_2 \leq \big \| (T+ \varepsilon B)_{N_{r+1}}^{-1}  \big(  \widehat{\pmb{z}}^{(r)}_p  \big) \big\|_2  \big \| F\big(  \widehat{\pmb{z}}^{(r)}_p\big) \big\|_2 \leq \frac{1}{\varepsilon_{N_{r+1}}}  \exp\left\{-2N_{r}^{s}\right\}.
\]
Furthermore, by applying~\Cref{lem: inverse-derivative}, we estimate the derivative of the update. The derivative satisfies:
\begin{align*}
\| \partial \Delta^{(r)} \|_2 & \leq \big\| \partial (T+ \varepsilon B)_{N_{r+1}}^{-1}  \big(  \widehat{\pmb{z}}^{(r)}_p  \big) \big \|_2 \big\| F\big(  \widehat{\pmb{z}}^{(r)}_p\big) \big\|_2 + \big \| (T+ \varepsilon B)_{N_{r+1}}^{-1}  \big(  \widehat{\pmb{z}}^{(r)}_p  \big) \big \| _2\big\| \partial F\big(  \widehat{\pmb{z}}^{(r)}_p\big) \big \|_2                                                            \nonumber \\
                                        & \leq \left(  \frac{4n\sqrt{N_{r+1}} + \varepsilon_{N_{r+1}}}{\varepsilon_{N_{r+1}}^2}\right) \exp\left\{-2N_{r}^{s}\right\}.       
\end{align*}                                       
By combining the two bounds above, we can establish a threshold $\varepsilon_0 > 0$ such that 
\begin{equation}
\label{eqn: difference-derivative-estimate}
\| \Delta^{(r)}  \| \leq \exp\left\{-\frac{7N_{r}^{s}}{4}\right\}
\end{equation}
for any $0<\varepsilon \leq \varepsilon_0$. Now, choosing $s = s(\varepsilon)>0$ such that $6M^{s} \leq 7$. Then, at the $(r+1)$-th iteration, for any index $m= \{0, 1, \ldots, r+1\}$ and any $\pmb{k}  \in  \Lambda_{M^m} \setminus \Lambda_{M^{m-1}}$, the following holds:
\begin{align*}
 \| \widehat{\pmb{z}}^{(r+1)}(\pmb{k}) \|  \leq \ \| \widehat{\pmb{z}}^{(r)}(\pmb{k}) \|+  \|\Delta^{(r)}(\pmb{k})\|  \leq  \sum_{\ell=m}^{r+1}\exp\left\{ - \frac32 N_{\ell}^{s} \right\}
\end{align*}
which confirms the decay property~\eqref{eqn: vector-decay-r+1} across successive iterations.

\paragraph{Super-exponentical convergence} To establish the super-convergence property in~\eqref{eqn: vector-field-decay-r+1}, we analyze the vector field after the Newton update~\eqref{eqn: nash-moser-single-simplification}. By applying a Taylor expansion to the vector field and the frequency, and invoking~\Cref{prop: tensor}, the post-update residual can be bounded by the truncation error and the quadratic terms of the iteration: 
\begin{equation}
\label{eqn: vector-estimate-taylor-r+1}
\| F(\widehat{\pmb{z}}_p^{(r+1)}; \pmb{\omega}^{(r+2)}) \|_2 \leq \| [ (T + \varepsilon B) - (T + \varepsilon B)_{N_{r+1}}] \Delta^{(r)}  \|_2 + \exp\left\{- 3N_{r+1}^{s} \right\},
\end{equation}
while the derivative of the residual satisfies the following bound:
\begin{align}
\| \partial F(\widehat{\pmb{z}}_p^{(r+1)}; \pmb{\omega}^{(r+2)}) \|_2 \leq &  \| \partial [ (T + \varepsilon B) - (T + \varepsilon B)_{N_{r+1}}] \Delta^{(r)}  \|_2  \nonumber \\ &+  \| [ (T + \varepsilon B) - (T + \varepsilon B)_{N_{r+1}} ]\partial \Delta^{(r)}  \|_2+  3\exp\left\{- 3N_{r+1}^{s} \right\}.     \label{eqn: vector-estimate-derivative-taylor-r+1}
\end{align}
To estimate the Gevery norms in~\eqref{eqn: vector-estimate-taylor-r+1} and~\eqref{eqn: vector-estimate-derivative-taylor-r+1}, we exploit the off-diagonal decay of the operators $T+ \varepsilon B $ and $(T+ \varepsilon B)_{N_{r+1}}^{-1}$ established in~\Cref{prop: convolution} and~\Cref{lem: inverse-derivative}. Since the estimation procedures for these terms are analogous, we focus on $[(T+ \varepsilon B)- (T+ \varepsilon B)_{N_{r+1}}] \Delta^{(r)}$ as the representative example. We utilize the following decomposition:
\begin{align}
[(T+ \varepsilon B)- (T+ \varepsilon B)_{N_{r+1}}] \Delta^{(r)} = & \left[  (I - P_{N_{r+1}}) (T+ \varepsilon B) P_{N_{r+1}}  \right]\left[ (I - P_{N_{r+1}/3})  \Delta^{(r)} \right] \nonumber \\ &+ \left[ (I - P_{N_{r+1}}) (T+ \varepsilon B)  P_{N_{r+1}/3} \right] \Delta^{(r)}.  \label{eqn: decompostion-t-tr}
\end{align}
Given the confirmed off-diagonal decay, there exists a threshold $\varepsilon_0 > 0$ such that for $0 < \varepsilon \leq \varepsilon_0$, the following estimates holds:
\begin{equation}
\label{eqn: final-super-convergence-bound-1}
\|(I - P_{N_{r+1}}) (T+ \varepsilon B)  P_{N_{r+1}/3} \| \leq \frac13\exp\left\{ - \frac{7}{12} N_{r+1}^{s} \right\}
\end{equation}
and
\begin{equation}
\label{eqn: final-super-convergence-bound-2}
\|(I - P_{N_{r+1}/3}) (T+ \varepsilon B) _{N_{r+1}}^{-1} P_{N_{r+1}/4} \| \leq \frac13\exp\left\{ - \frac{1}{48} N_{r+1}^{s} \right\}
\end{equation}
Noting that $\|(I - P_{N_{r+1}}) T P_{N_{r+1}} \| \leq \|S\| \leq 1$, we combine the bounds~\eqref{eqn: final-super-convergence-bound-1} and~\eqref{eqn: final-super-convergence-bound-2} with the decomposition in~\eqref{eqn: decompostion-t-tr} to account for the truncation error from~\eqref{eqn: vector-estimate-taylor-r+1} and~\eqref{eqn: vector-estimate-derivative-taylor-r+1}. For a suitably chosen threshold $\varepsilon_0 > 0$, we obtain
\begin{equation}
\label{eqn: final-super-convergence-1}
\| F(\widehat{\pmb{z}}_p^{(r+1)}; \pmb{\omega}^{(r+2)}) \| \leq \frac{2}{3} \exp\left\{ -\left( \frac{2}{M^{s}}  + \frac{1}{48}\right) N_{r+1}^{s} \right\}  +  4\exp\left\{- 3N_{r+1}^{s} \right\}
\end{equation}
By choosing $s = s(\varepsilon)>0$ such that $95 M^{s} \leq 96$, we arrive at the desired super-convergence bound:
\begin{equation}
\label{eqn: final-super-convergence-2}
\| F(\widehat{\pmb{z}}_p^{(r+1)}; \pmb{\omega}^{(r+2)}) \|_s \leq \ \exp\left\{ - 2  N_{r+1}^{s} \right\}, 
\end{equation}
which concludes the proof of the Iteration Lemma~(\Cref{lem: iteration}). 
\end{proof}

\begin{remark}
\label{rem: sum-invert}
Regarding the decay estimates for lattice vectors specified in \eqref{eqn: vector-decay-r} and \eqref{eqn: vector-decay-r+1} of Lemma \ref{lem: iteration}: while the individual estimates might appear to grow intermittently upon initial inspection, the cumulative decay remains well-controlled. Specifically, for any multi-index $\pmb{k} \in  \Lambda_{N_r} \setminus \Lambda_{N_{r-1}}$, the following holds: 
\[
\sum_{\ell = r}^{\infty} \exp\left\{-\frac{3N_{\ell}^{s}}{2}\right\} \leq \exp\left\{-N_{r}^{s} \right\} \leq \exp\left\{ - |\pmb{k}|^{s} \right\},
\]
which demonstrates that at each iteration $r$, the sequence $\widehat{\pmb{z}}^{(r)}$ remains within the space $\mathcal{K}(s)$.  Consequently, the solution preserves the $s$-Gevrey decay property throughout the iterative process, ensuring that the spatial localization does not degrade as the lattice size $N$ increases.\end{remark}

\paragraph{Proof of~\Cref{thm: main}} First, we observe that \eqref{eqn: difference-derivative-estimate} holds when $ \widehat{\pmb{z}}_p $ is replaced by $ \widehat{\pmb{z}} $
as $ \widehat{\pmb{z}}_q^{(r)} $ remains constant at each step. The displacement between successive iterations is bounded as:

\begin{equation*}
  \| \widehat{\pmb{z}}^{(r+1)} - \widehat{\pmb{z}}^{(r)} \| = \| \Delta^{(r)} \| \leq \exp \left\{ - \frac{7}{4} N_{r}^s \right\} = \exp \left\{ - \frac{7}{4} (M^s)^r \right\}.
\end{equation*}

\noindent This implies the convergence of the sequences $ \left\{ \widehat{\pmb{z}}^{(r)} \right\} $. Furthermore, we can bound the distance to the limit $\mathbf{z}^\star$ by estimating the tail of the sequence:

\begin{equation*}  
  \begin{aligned}
    \| \widehat{\pmb{z}}^{(r)} - \widehat{\pmb{z}}^\star \| 
    &\leq \sum_{l = 0}^{\infty} \| \widehat{\pmb{z}}^{(r+l+1)} - \widehat{\pmb{z}}^{(r+l)} \| \leq \sum_{l = 0}^{\infty} \exp \left\{ - \frac{7}{4} (M^s)^{r+l} \right\}  \\
    &\leq \exp \left\{ - \frac{3}{2} (M^s)^{r} \right\},
  \end{aligned}
\end{equation*}

\noindent which confirms \eqref{eqn: z-hat-converge}. Moreover, from $Q$-equations \eqref{eqn: q-eqn-numerical} and the uniform bound \eqref{eqn: hessian-bound}, we have:

\begin{equation*}
  | \pmb{\omega}^{(r+1)} - \pmb{\omega}^{(r)} | \leq \varepsilon \gamma \| \widehat{\pmb{z}}^{(r+1)} - \widehat{\pmb{z}}^{(r)} \| \leq \varepsilon \gamma \exp \left\{ - \frac{7}{4} (M^s)^r \right\}.
\end{equation*}

\noindent Thus, the convergence of the frequency sequences $ \left\{ \pmb{\omega}^{(r)} \right\} $ follows, and we also have

\begin{equation*}  
  \begin{aligned}
    | \pmb{\omega}^{(r)} - \pmb{\omega}^\star | 
    &\leq \sum_{l = 0}^{\infty} | \pmb{\omega}^{(r+l+1)} - \pmb{\omega}^{(r+l)} | \leq \sum_{l = 0}^{\infty} \varepsilon \gamma \exp \left\{ - \frac{7}{4} (M^s)^{r+l} \right\}  \\
    &\leq \exp \left\{ - \frac{3}{2} (M^s)^{r} \right\},
  \end{aligned}
\end{equation*}

\noindent which confirms \eqref{eqn: frequency-hat-converge}.

Finally, we write the exact solution in a Fourier expansion 

\begin{equation*}
  \pmb{z}^\star(t) = \sum_{\pmb{k} \in \mathbb{Z}^n} \widehat{\pmb{z}}^\star(\pmb{k}) e^{i \langle \pmb{\omega}^\star, \pmb{k} \rangle t},
\end{equation*}

\noindent From the support property \eqref{eqn: vector-support-r}, the error can be split into two terms:

\begin{equation*}
  \begin{aligned}
    \| \pmb{z}^{(r)}(t) - \pmb{z}^\star(t) \| &\leq \left\| \sum_{\pmb{k} \in \mathbb{Z}^n} \left( \widehat{\pmb{z}}^\star(\pmb{k}) - \widehat{\pmb{z}}^{(r)}(\pmb{k}) \right) e^{i \langle \pmb{\omega}^\star, \pmb{k} \rangle t} \right\|
      + \left\| \sum_{\pmb{k} \in \Lambda_{M^{r+1}}} \widehat{\pmb{z}}^{(r)}(\pmb{k}) \left( e^{i \langle \pmb{\omega}^\star, \pmb{k} \rangle t} - e^{i \langle \pmb{\omega}^{(r)}, \pmb{k} \rangle t} \right) \right\|  \\
      & \defi I_1 + I_2.
  \end{aligned}
\end{equation*}

\noindent By \eqref{eqn: vector-support-r}, Gevrey decay property \eqref{eqn: vector-decay-r}, the Cauchy-Schwarz inequality, 
and \eqref{eqn: z-hat-converge} just proved above, we estimate the term $ I_1 $ as

\begin{equation*}
  \begin{aligned}
      I_1 &\leq \sum_{\pmb{k} \in \Lambda_{M^{r+1}}} \| \widehat{\pmb{z}}^\star(\pmb{k}) - \widehat{\pmb{z}}^{(r)}(\pmb{k}) \| + \sum_{\pmb{k} \notin \Lambda_{M^{r+1}}} \| \widehat{\pmb{z}}^\star(\pmb{k}) \|  \\
          &\leq \| \widehat{\pmb{z}}^{(r)} - \widehat{\pmb{z}}^\star \| \cdot (2M^{r+1} + 1)^{n/2} + \sum_{\pmb{k} \notin \Lambda_{M^{r+1}}} e^{-\frac{5}{4} |k|^s}  \\
          &\leq \frac{1}{2} \exp \left\{ - (M^s)^{r} \right\}.
  \end{aligned}
\end{equation*}

From the uniform boundedness of $ \| \widehat{\pmb{z}}^{(r)} \| $, a Taylor expansion of the exponential term, and the Cauchy-Schwarz inequality, we estimate the term $ I_2 $ as

\begin{equation*}
  \begin{aligned}
      I_2 &\leq \| \widehat{\pmb{z}}^{(r)} \|  \left( \sum_{\pmb{k} \in \Lambda_{M^{r+1}}} \left( e^{i \langle \pmb{\omega}^\star, \pmb{k} \rangle t} - e^{i \langle \pmb{\omega}^{(r)}, \pmb{k} \rangle t} \right)^2 \right)^\frac{1}{2}   \\
          &\leq \left( \| \widehat{\pmb{z}}^\star \| + \exp \left\{ - \frac{3}{2} (M^s)^{r} \right\} \right) \cdot | \pmb{\omega}^{(r)} - \pmb{\omega}^\star | \cdot \left( \sum_{\pmb{k} \in \Lambda_{M^{r+1}}} \|\pmb{k} t \|^2 \right)^\frac{1}{2}  \\
          &\leq \frac{1}{2} \exp \left\{ - (M^s)^{r} \right\},
  \end{aligned}
\end{equation*}
which holds for any $ t \in [0, N_r] = [0, M^{r}] $. By combing the estimate for terms $ I_1 $ and $ I_2 $, we obtain the final time-domain error bound \eqref{eqn: convergence-soln}, which completes the proof of \Cref{thm: main}.

\section{Numerical Experiments}
\label{sec: numer}

In this section, we demonstrate the computational efficiency of the proposed alternating numerical procedure (see~\Cref{fig: algorithm-nm}). The key feature of this iterative scheme lies in its decoupled refinement strategy: the $Q$-equations~\eqref{eqn: q-eqn-vector} are employed to update the frequency via the iterative scheme~\eqref{eqn: q-eqn-numerical}, while the dimension-enlarged Newton scheme~\eqref{eqn: p-eqn-numerical} is implement to solve the $P$-equations~\eqref{eqn: p-eqn-vector} and derive the non-resonant vectors.  To demonstrate the versatility and robustness of the method, we apply it to two classical benchmark problems in nonlinear dynamics: the undamped Duffing oscillator and the Hénon–Heiles model.

\subsection{Undamped Duffing oscillator}
\label{subsec: duffing}

The undamped Duffing oscillator serves as a classical model for Hamilton systems featuring cubic nonlinearities; see, for instance,~\citet{Kanamaru:2008}. The Hamiltonian $H = H(x, y)$, defined on the phase space $(x, y) \in \mathbb{R}^2$, takes the form
\begin{equation}
\label{eqn: h-duffing}
  H =  H(x, y) = \frac{1}{2} (x^2 + y^2) + \frac{\varepsilon}{4} y^4,
\end{equation}
where the system is equipped with the standard symplectic two-form $\omega = dx \wedge dy$. To facilitate the application of our alternating numerical procedure, we introduce the complex coordinates:
\begin{subequations}
\label{eqn: coordinate-change}
\begin{empheq}[left=\empheqlbrace]{align} 
         z       &= \frac{1}{\sqrt{2}} (y - ix),       \label{eqn: coordinate-change-z} \\
        \bar{z} &= \frac{1}{\sqrt{2}} (y + ix)     \label{eqn: coordinate-change-z-bar}                                                     
\end{empheq}    
\end{subequations}
 Substuting the coordinate transformation~\eqref{eqn: coordinate-change} into the Hamiltonian~\eqref{eqn: h-duffing}, the Hamiltonian $H= H(z, \bar{z}) $ can be rewritten in complex form as
\begin{equation}
   H = H(z, \bar{z}) = |z|^2 + \frac{\varepsilon}{16} (z + \bar{z})^4,
\end{equation}
with the symplectic structures preserved as $dz \wedge d\overline{z} = -idx\wedge dy$. Earlier mathematical studies of this model have focused on quasi-periodic solutions through the lens of the Poincar\'e map and the Moser twist theorem. These approaches establish the existence of invariant tori and Lagrange stability (e.g.,~\citet{yuan1998invariant, yuan2000lagrange}), which ensure the long-term boundedness of the system's trajectories.

We demonstrate the efficiency of our alternating numerical procedure using a perturbation parameter of $\varepsilon = 1$. The evolution of the phase trajectories for iterations $r = 1$ to $r =5$ is illustrated in~\Cref{fig: traj-duff}. Starting from the linear solution serving as the initial approximation, the method rapidly captures the nonlinear dynamics. 
\begin{figure}[htb!]
\centering
\begin{subfigure}[t]{0.46\linewidth}
\centering
 \includegraphics[scale=0.16]{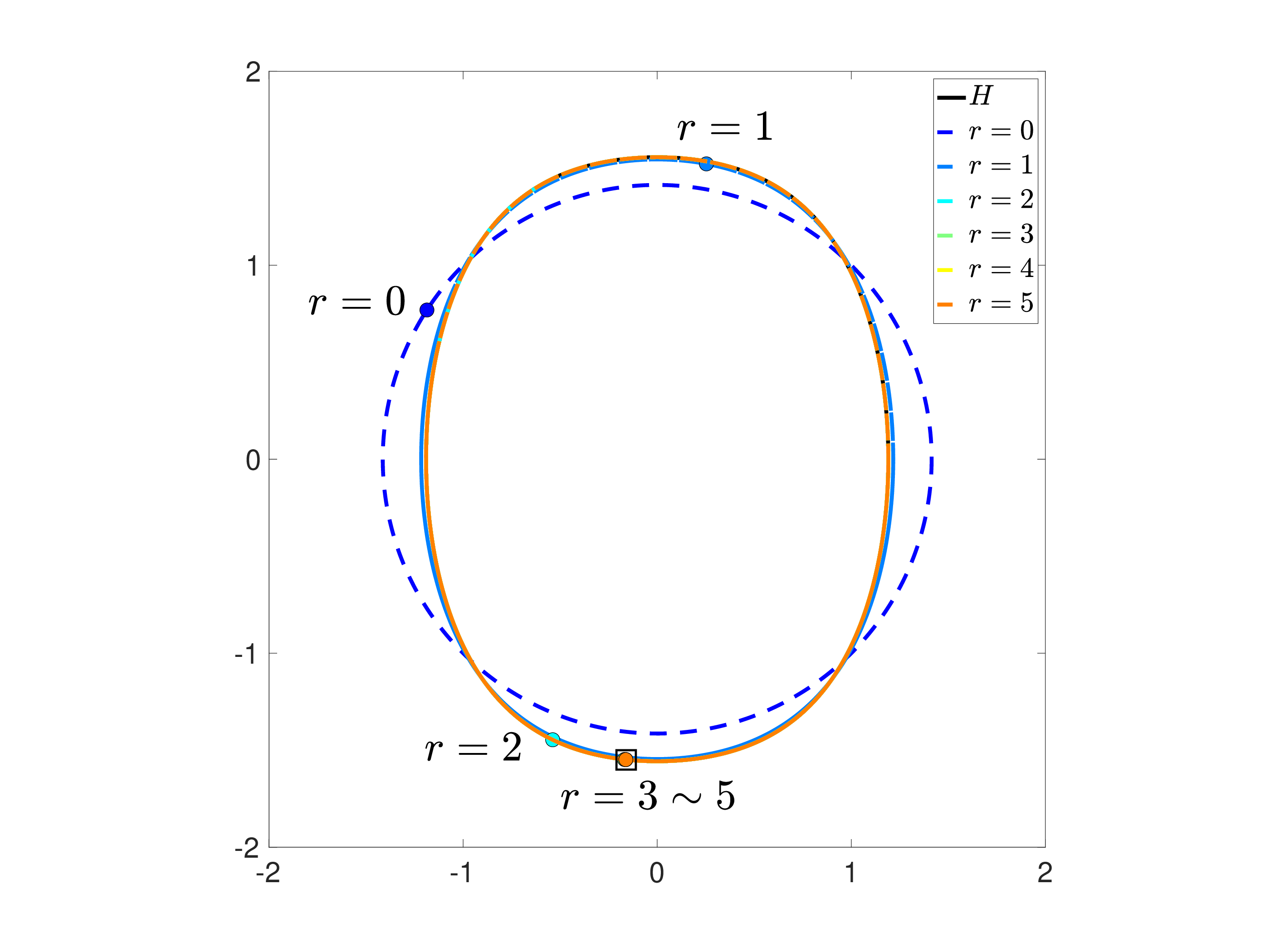}
\caption{Global phase portrait}
\end{subfigure}
\begin{subfigure}[t]{0.46\linewidth}
\centering
 \includegraphics[scale=0.16]{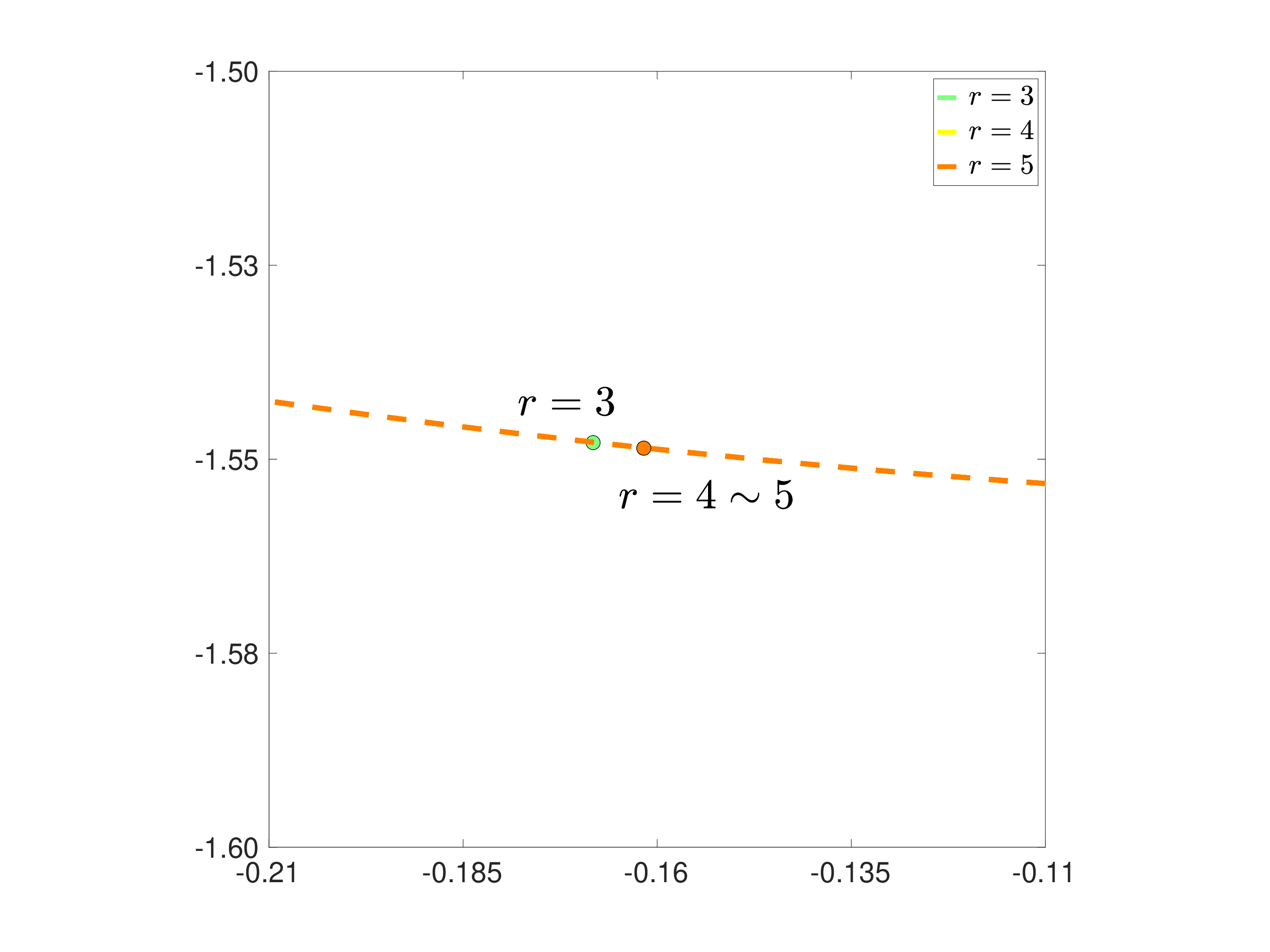}
\caption{Zoom-in view ($r = 3 \sim 5$)}
\end{subfigure}
\caption{Phase space trajectories of the Duffing oscillator with states labeled at $t=10$. The global view (a) illustrates the overall flows, while the magnification (b) highlights the rapid convergence toward the target state at $t=10$.}
\label{fig: traj-duff}
\end{figure}
By the second iteration, the solution at $t=10$ already reaches a position significantly close to the accurate trajectory.  At the third iteration, the trajectory becomes almost indistinguishable from the exact solution, with an error on the scale $O(0.01)$. For $r=4$ and $r=5$, the results become so refined that they are practically indistinguishable, even under a magnified zoom-in view.  To further assess the accuracy of the method, we examine the iteration errors,  which exhibit super-exponential convergence, as illustrated in~\Cref{fig: con-duffing}. Specifically, we track the following quantities: (i) the Fourier coefficient error, $\| \widehat{\pmb{z}}^{(r+1)} -  \widehat{\pmb{z}}^{(r+1)} \|$, measuring the convergence of the coefficients, (ii) the frequency error, $|\pmb{\omega}^{r+1} -\pmb{\omega}^{r} |$, quantifying the refinement of the computed frequency, and (iii) the pointwise solution error at $t=10$, $|\pmb{z}^{(r+1)}(10) - \pmb{z}^{(r)}(10)|$, evaluating the discrepancy of the solution at a fixed time. All three metrics consistently confirm the rapid convergence behavior of the proposed alternating scheme.

\begin{figure}[htb!]
\centering
\begin{subfigure}[t]{0.325\linewidth}
\centering
 \includegraphics[scale=0.115]{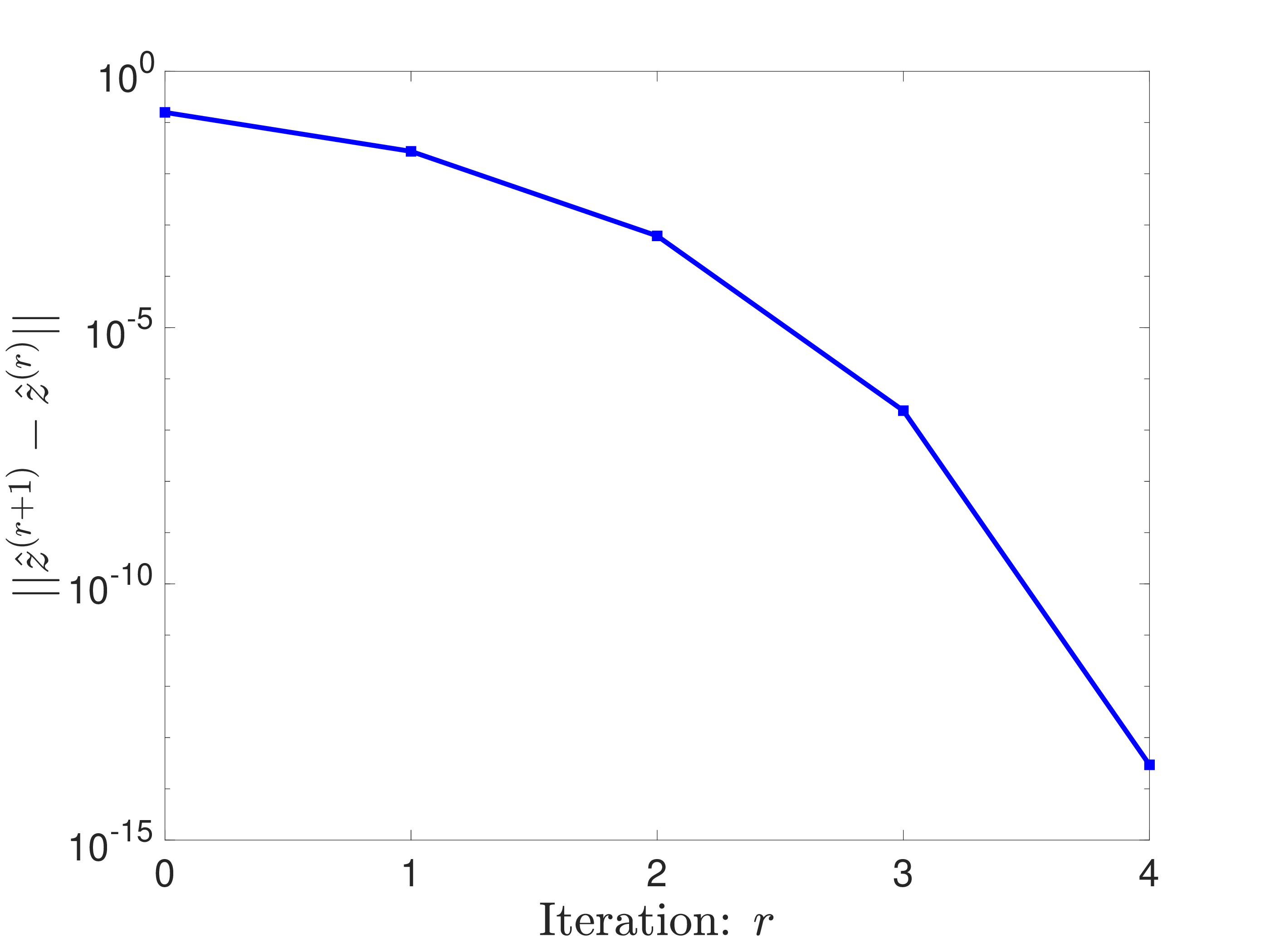}
\caption{Fourier coefficient vectors}
\end{subfigure}
\begin{subfigure}[t]{0.325\linewidth}
\centering
 \includegraphics[scale=0.115]{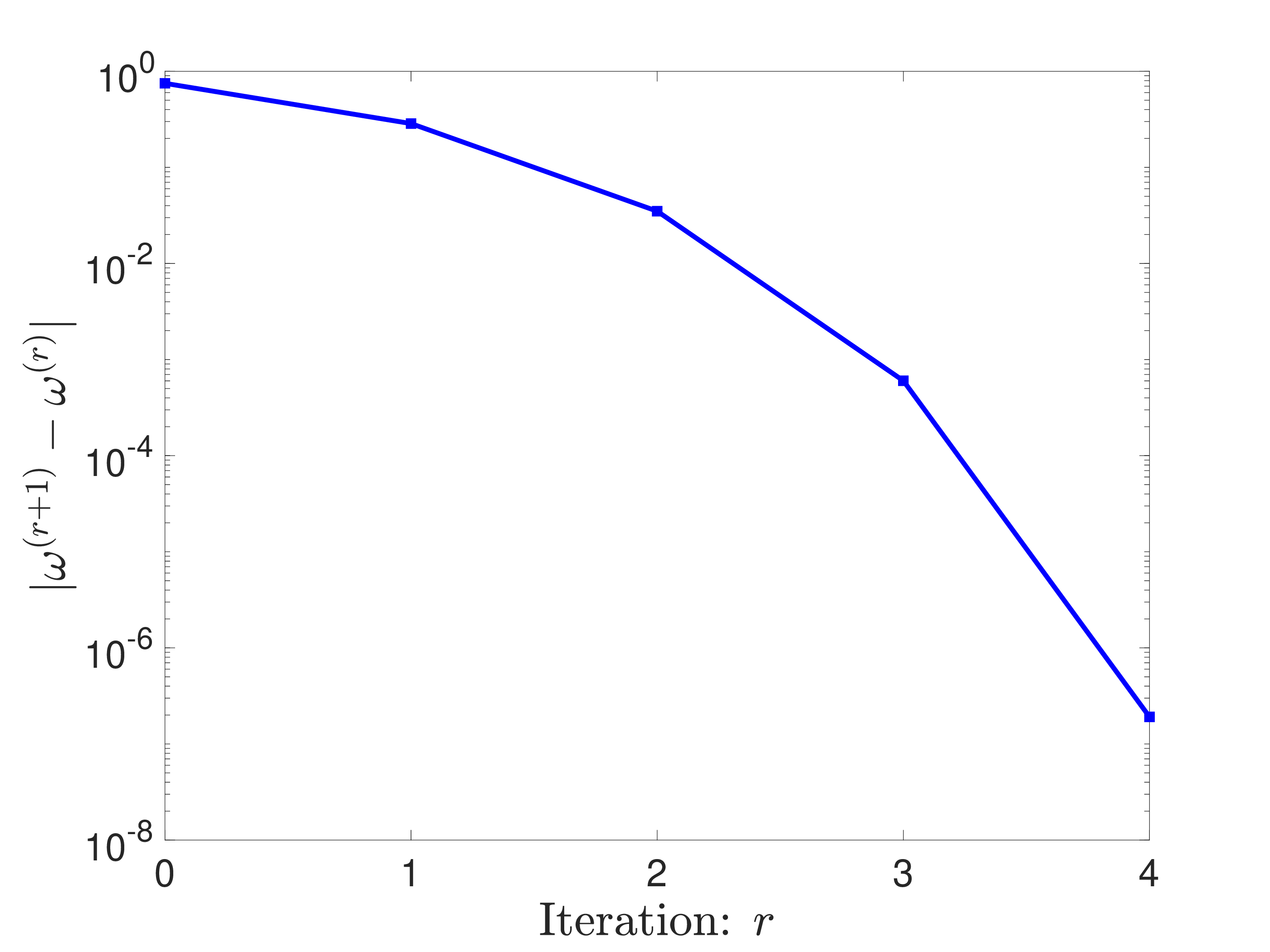}
\caption{Frequencies}
\end{subfigure}
\begin{subfigure}[t]{0.325\linewidth}
\centering
 \includegraphics[scale=0.115]{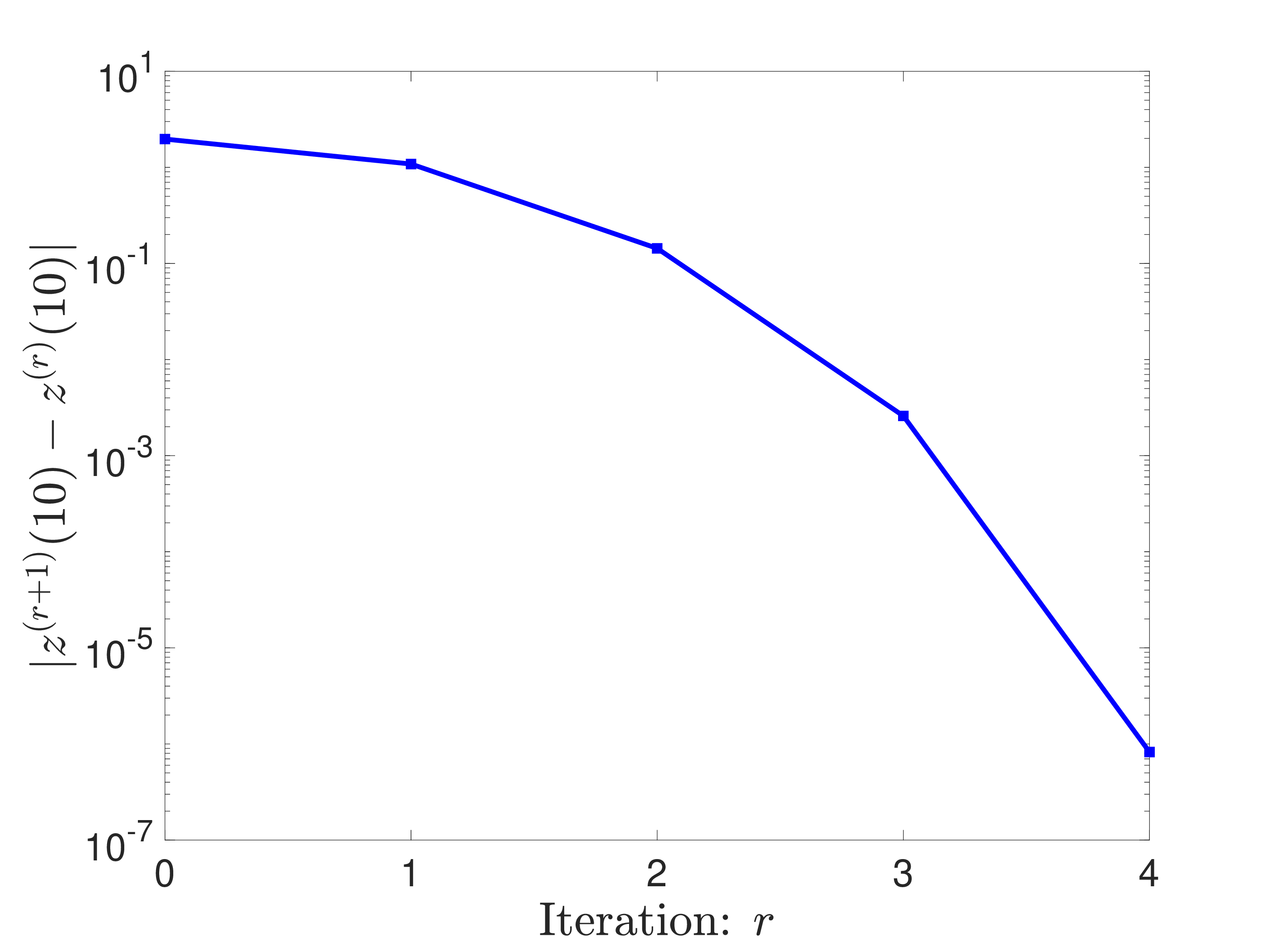}
\caption{State  at $t=10$}
\end{subfigure}
\caption{Convergence profiles across iterations for the Duffing system. The plots demonstrate the super-exponential decay of residuals for (a) the Fourier coefficient vector, (b) the identified frequencies, and (c) the solution state at $t=10$.}
\label{fig: con-duffing}
\end{figure}

\subsection{The H\'{e}non-Heiles model}
\label{subsec: henon-heiles}

While a one-dimensional Duffing system is sufficient to demonstrate the basic efficiency of our alternating numerical procedure, it is inherently limited. To fully capture the complexity of quasi-periodic solutions and the structural evolution of phase space, a higher-dimensional system is required. We therefore consider the Hénon-Heiles model, a two-dimensional cornerstone of celestial mechanics originally developed to describe stellar motion within an axisymmetric galactic potential~\citep{henon1964applicability}. The Hamiltonian $H = H(x_1, x_2; y_1, y_2)$, defined on $\mathbb{R}^4$, takes the form
\begin{equation}
\label{eqn: henon-helies-h}
    H = H(x_1, x_2, y_1, y_2) = \frac{1}{2} \omega_1 (x_1^2 + y_1^2) + \frac{1}{2} \omega_2 (x_2^2 + y_2^2) + \varepsilon (y_1^2 y_2 - \frac{1}{3} y_2^3),
\end{equation}
with the symplectic two-form $\omega = dx_1 \wedge dy_1 + dx_2 \wedge dy_2$. Similarly, to implement our alternating numerical procedure, we apply the complex coordinate transformation~\eqref{eqn: coordinate-change} into~the Hamiltonian~\eqref{eqn: henon-helies-h} as
\begin{equation}
\label{eqn: henon-helies-h-complex}
    H = H(z, \bar{z}) = \omega_1 |z_1|^2 + \omega_2 |z_2|^2 + \varepsilon \left[ \frac{1}{2\sqrt{2}} (z_1 + \bar{z}_1)^2 (z_2 + \bar{z}_2) - \frac{1}{6\sqrt{2}}(z_2 + \bar{z}_2)^3 \right],
\end{equation}
where the symplectic form is expressed as $w = dz_1 \wedge d\overline{z}_1 + dz_2 \wedge d\overline{z}_2 = -i \left(dx_1\wedge dy_1 + dx_2\wedge dy_2 \right)$. As a nearly-integrable system, this model serves as a rigorous testing ground for both perturbation theory and numerical integration. It comprises a harmonic oscillator base coupled with third-order nonlinear terms that trigger a transition from ordered to chaotic motion as energy increases.  The mathematical complexity of its nearly-integrable coupling was further formalized by~\citet{channell1990symplectic}, who utilized the model to demonstrate how symplectic integrators maintain the structural integrity of the phase space over long temporal scales.

\begin{figure}[htb!]
\centering
\begin{subfigure}[t]{0.46\linewidth}
\centering
 \includegraphics[scale=0.16]{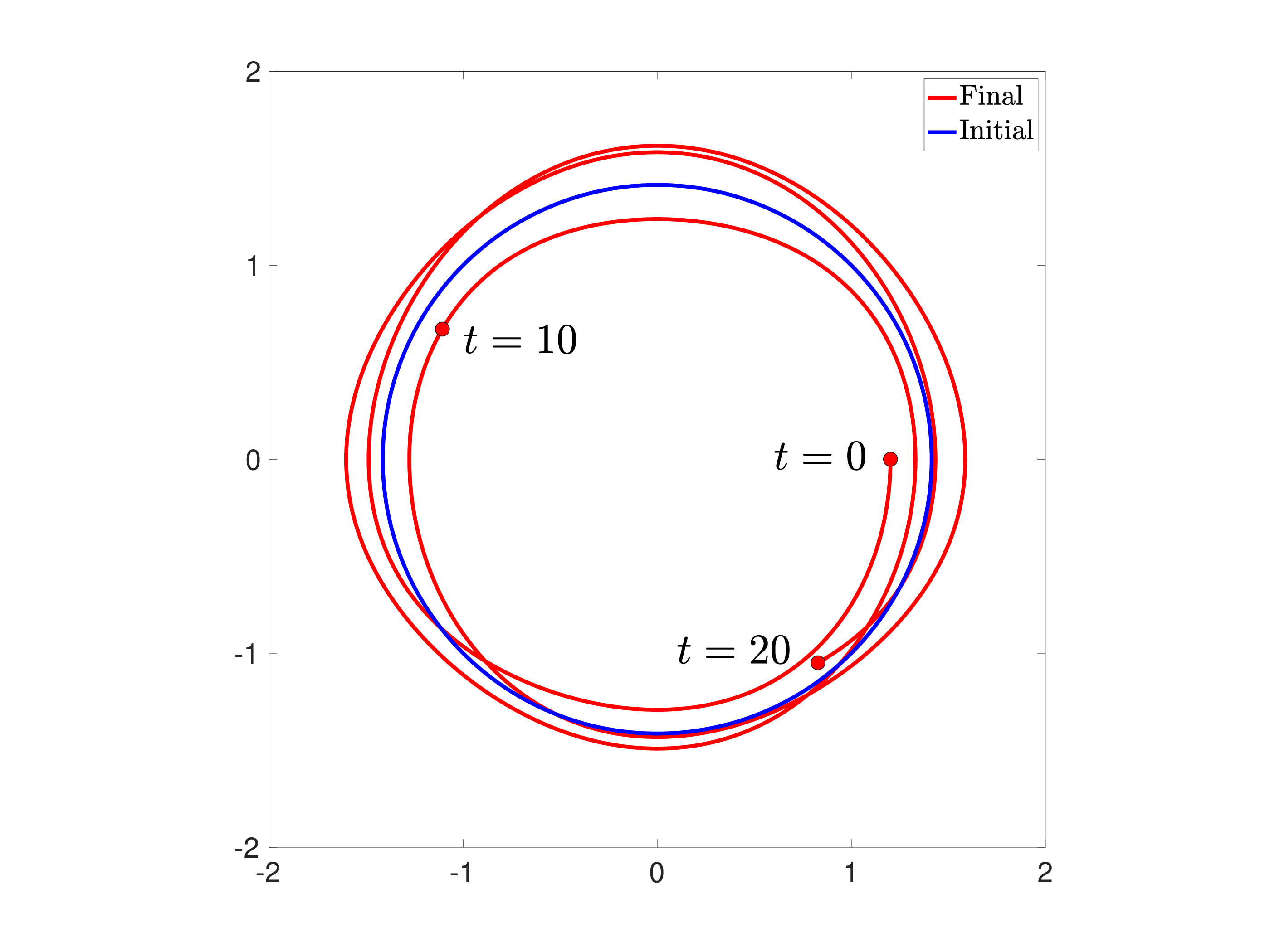}
\caption{$(x_1, y_1)$}
\end{subfigure}
\begin{subfigure}[t]{0.46\linewidth}
\centering
 \includegraphics[scale=0.16]{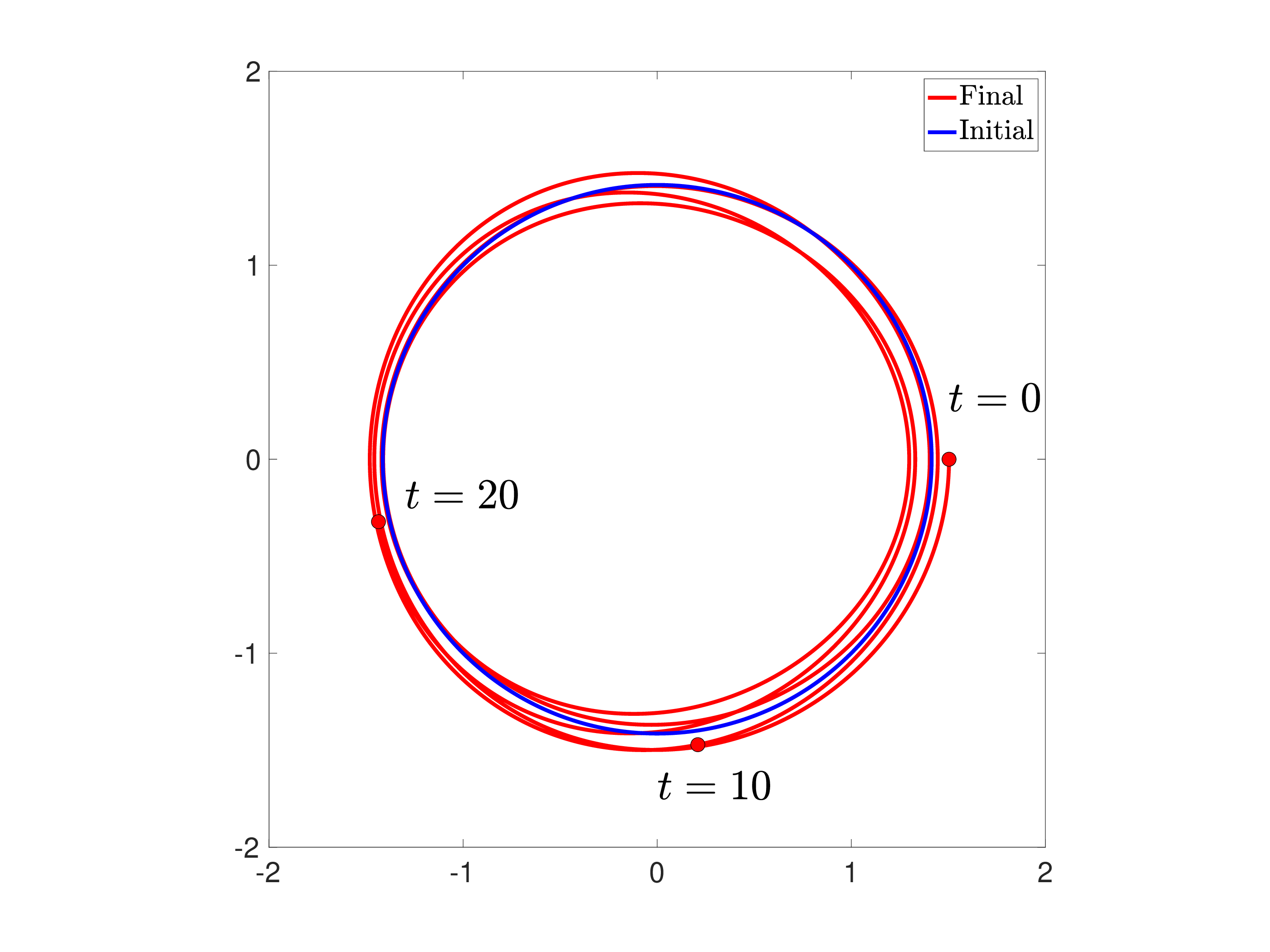}
\caption{$(x_2, y_2)$}
\end{subfigure}
\caption{Phase space trajectories of the H\'enon–Heiles model. The evolution of system states is highlighted at discrete time intervals ($t = 0$, $t = 10$, and $t = 20$), illustrating the dynamic progression of the trajectory. }
\label{fig: henon-heiles-1-dim}
\end{figure}
While symplectic methods are renowned for preserving the ``shape'' of the orbit, specifically the invariant tori, they often suffer from the accumulation of phase-lag errors over long integration periods. As a result, even when the qualitative shape of the orbit is maintained, the numerical trajectory may gradually drift along the torus. Our alternating numerical procedure effectively addresses this limitation by precisely predicting the instantaneous positions and phase angles along these trajectories. With the perturbation parameter set to $\varepsilon = 0.1$,~\Cref{fig: henon-heiles-1-dim} demonstrates that our method captures the exact state of the system within the two phase planes $(x_1, y_1)$ and $(x_2, y_2)$.From a theoretical sense, independently of computational resource constraints, the proposed procedure allows phase-lag errors to be reduced arbitrarily. Consequently, the particle's position at any prescribed time can be computed with high precision, thereby eliminating the temporal drift that typically compromises the long-term reliability of conventional Hamiltonian simulations.
\begin{figure}[htb!]
\centering
\begin{subfigure}[t]{0.42\linewidth}
\centering
 \includegraphics[scale=0.14]{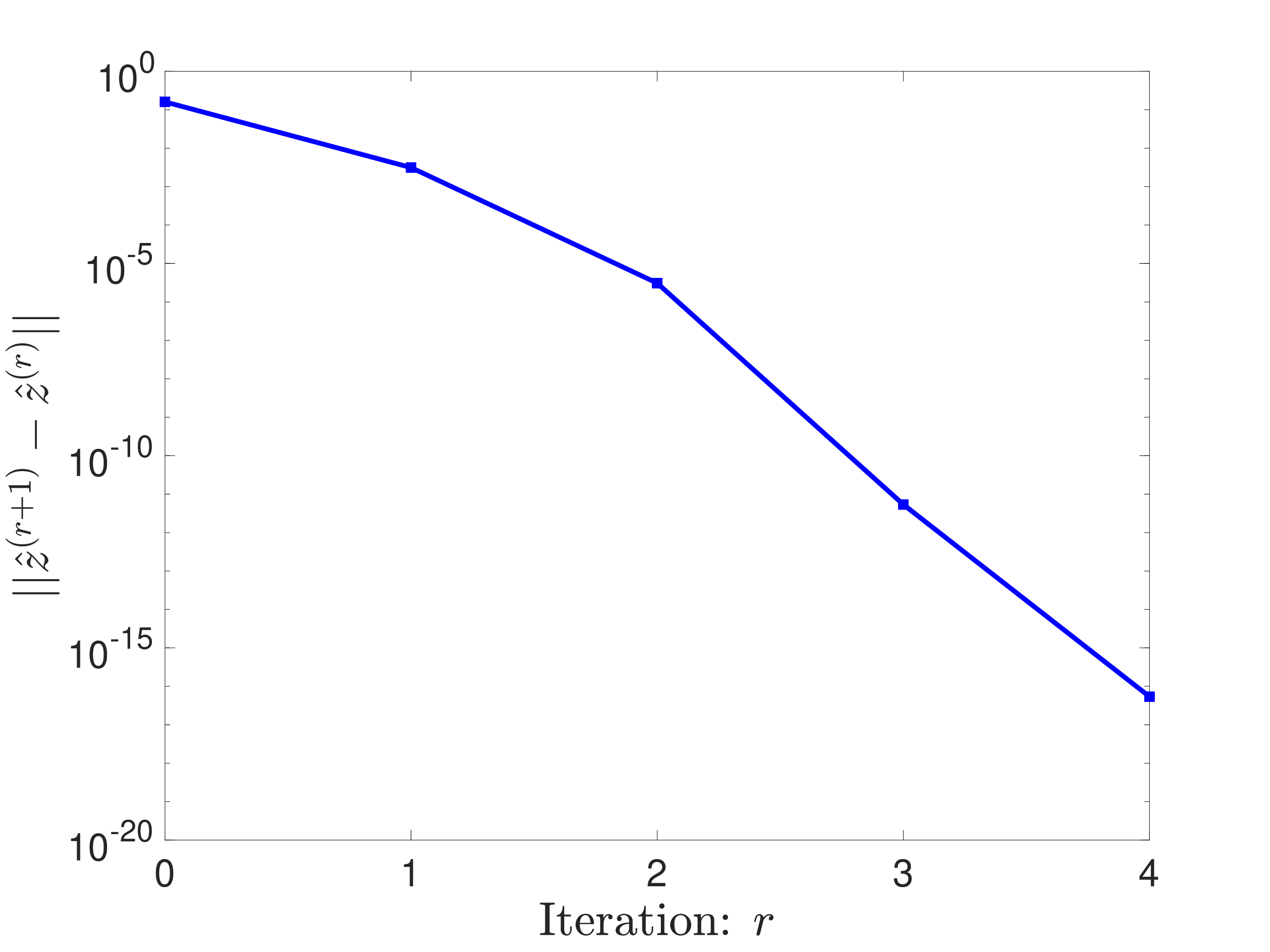}
\caption{Fourier coefficient vectors}
\end{subfigure}
\begin{subfigure}[t]{0.42\linewidth}
\centering
 \includegraphics[scale=0.14]{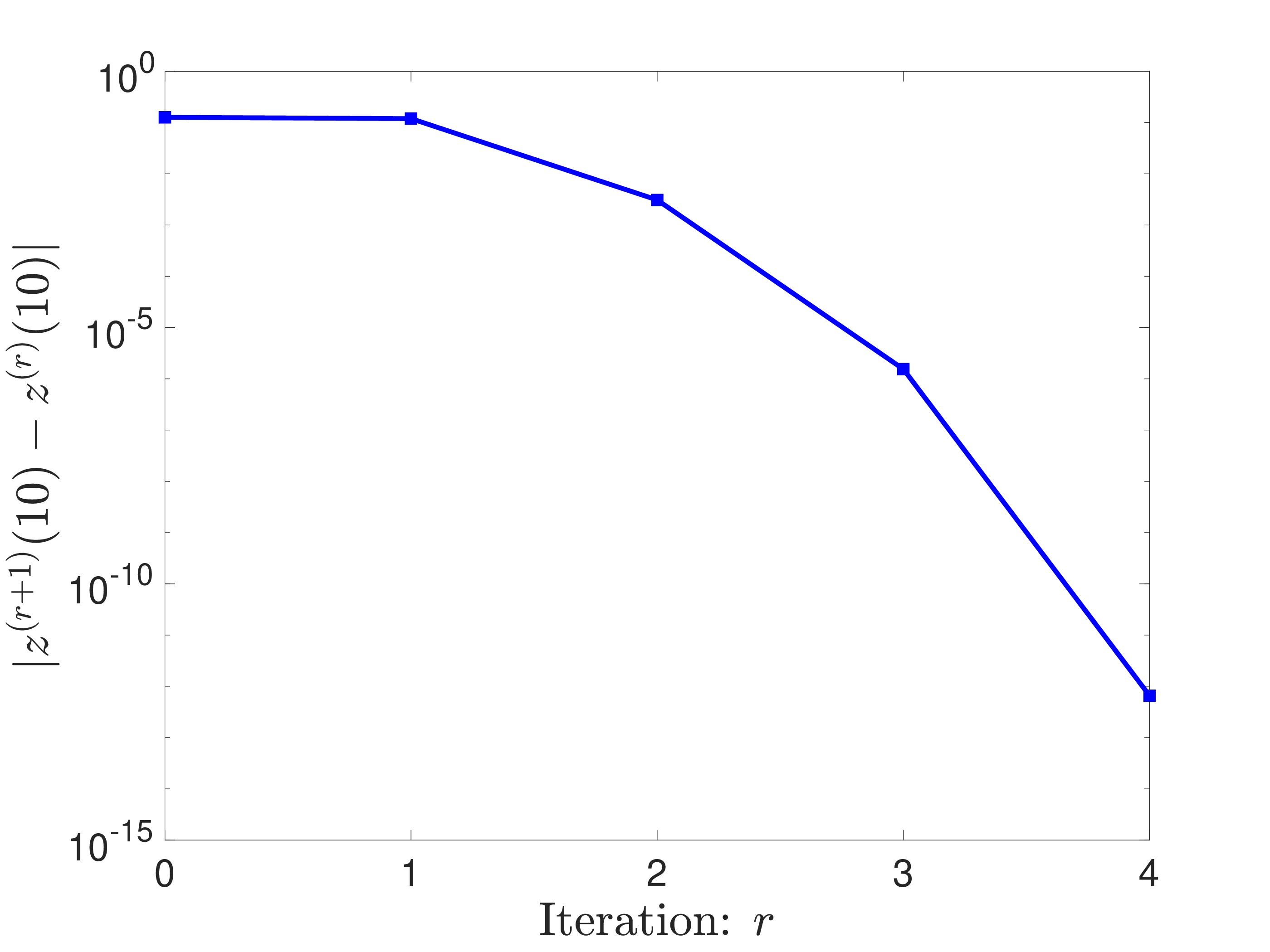}
\caption{State at $t=10$}
\end{subfigure}
\caption{Convergence profiles across iterations for the H\'enon–Heiles model.  (a) Norm decay of the Fourier coefficient vector. (b) Numerical solution state evaluated at $t=10$. }
\label{fig: henon-heiles-con}
\end{figure}
Furthermore, we exhibit super-exponential convergence in~\Cref{fig: henon-heiles-con-freq} for both the Fourier coefficient error, $\| \widehat{\pmb{z}}^{(r+1)} -  \widehat{\pmb{z}}^{(r+1)} \|$ and the pointwise solution error at $t=10$, $|\pmb{z}^{(r+1)}(10) - \pmb{z}^{(r)}(10)|$. A subtle point requires attention here: the support for the complex Hamilton vector field $\pmb{X} = (\partial H/ \partial \overline{z}_1, \partial H/ \partial z_2)$ does not include $\pmb{e}_1$ and $\pmb{e}_2$ as given by:
\[
\left\{ 
\begin{aligned}
& \mathrm{supp} \widehat{\frac{\partial H}{\partial \overline{z}_1}} = \left\{ \pm \pmb{e}_1 \pm \pmb{e}_2 \right\} \\
& \mathrm{supp} \widehat{\frac{\partial H}{\partial \overline{z}_2}} = \left\{ 0,  \pm 2\pmb{e}_1, \pm 2\pmb{e}_2 \right\}
\end{aligned} \right.
\]
\begin{figure}[htb!]
\centering
\begin{subfigure}[t]{0.42\linewidth}
\centering
 \includegraphics[scale=0.14]{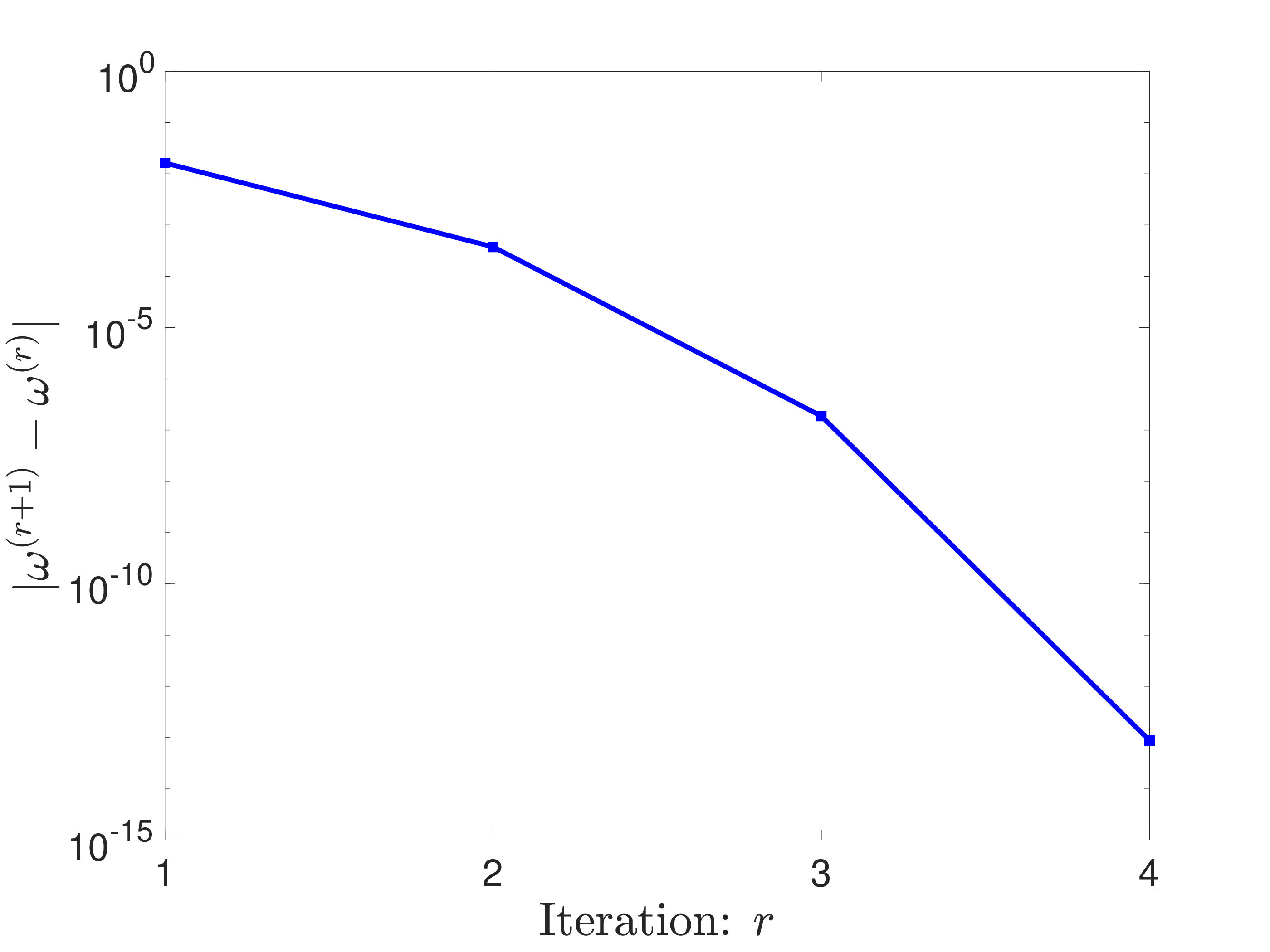}
\caption{Frequency error}
\end{subfigure}
\begin{subfigure}[t]{0.42\linewidth}
\centering
 \includegraphics[scale=0.14]{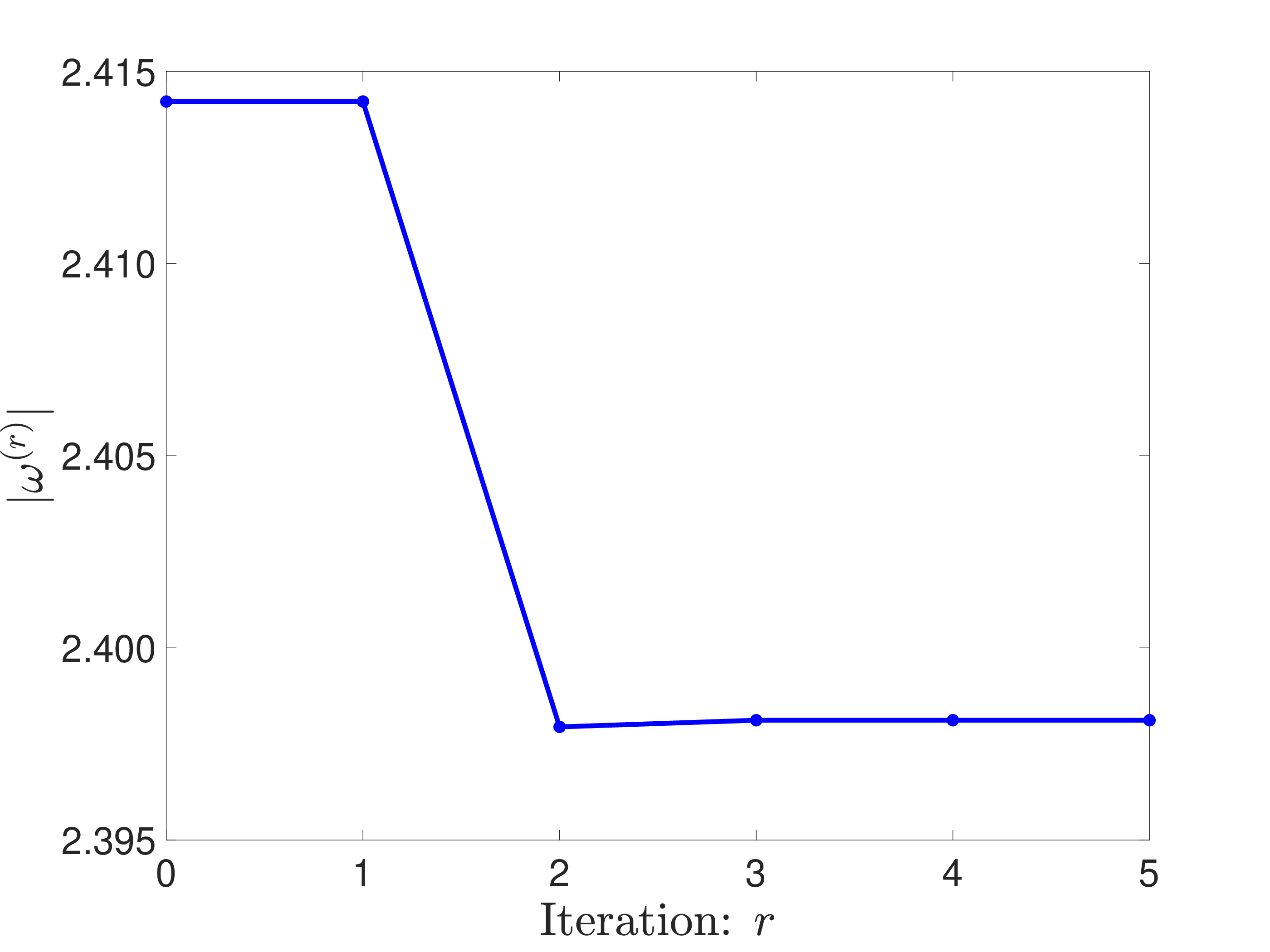}
\caption{Frequencies}
\end{subfigure}
\caption{The behavior of frequencies across iterations for the H\'enon–Heiles model. (a) Reduction of frequency error. (b) The frequency iteration.}
\label{fig: henon-heiles-con-freq}
\end{figure}
Thus, the vector field associated with the $Q$-equations in~\eqref{eqn: q-eqn-vector} is zero, meaning that the associated frequency update~\eqref{eqn: q-eqn-numerical} remains invariant during the first step. The resulting numerical performance for the frequency iteration, which clearly illustrates this super-exponential convergence, is depicted in~\Cref{fig: henon-heiles-con-freq}.

\section{Conclusion and further work}
\label{sec: conclu}

In this study, we developed an alternating numerical procedure that bridges the gap between theoretical infinite-dimensional analysis and practical computation. By introducing a numerical update for frequency derived from the Q-equations, we successfully integrated an iterative framework with the dimension-enlarged Newton scheme (Nash-Moser scheme) for P-equations. This work establishes a novel computational pathway for the CWB framework, enabling the practical application of previously abstract theoretical constructs. The scheme achieves arbitrary numerical accuracy in capturing the intricate dynamics of quasi-periodic solutions within nearly-integrable Hamiltonian systems, providing a functional realization of the CWB scheme. We have significantly streamlined the multi-scale analysis developed by~\citet{bourgain1998quasi} to establish the convergence of this algorithm. This simplification ensures that the numerical results maintain rigorous mathematical integrity while remaining computationally accessible.


%
%
%
%
%
%
%

The methodology presented here lays the groundwork for several extensions. Our immediate focus involves the computation of lower-dimensional quasi-periodic solutions for finite-dimensional systems. In forthcoming work, we will investigate the numerical acquisition of quasi-periodic solutions for nonlinear partial differential equations (PDEs) using this alternating algorithm. Further research will examine numerical convergence and stability, specifically focusing on the influence of small divisors on the robustness and efficiency of the expanded numerical scheme.

\section*{Acknowledgements}
This work was partially supported by the NSFC (Grant No. 12241105) and by SIMIS (startup fund and cross-disciplinary research projects).


\bibliographystyle{abbrvnat}
\bibliography{sigproc}

\appendix
\section{Proofs of results in~\Cref{sec: priori-error}}
\label{sec: priori-error-app}

In this appendix, we provide the detailed proofs for the results presented in~\Cref{sec: priori-error}, specifically addressing~\Cref{prop: vector-field} (\Cref{subsec: vector-field}),~\Cref{prop: convolution} (\Cref{subsec: convolution}),~\Cref{prop: B-operator} (\Cref{subsec: b-operator}), and~\Cref{prop: tensor} (\Cref{subsec: tensor}).  To facilitate these proofs, we first establish an elementary inequality and demonstrate that the Gevrey decay property is preserved under convolution. These preliminary results are rigorously stated in the following lemmas. 
\subsection{Preliminary lemmas}
\label{subsec: preliminary}

\begin{lemma}
    \label{lemma: App-1}
    Let $a, b \ge 0$ and $0 < s < 1$. Then the following inequality holds:
    \begin{equation}
    \label{eqn: ineq_power}
        a^s + b^s - (a+b)^s \ge (2-2^s) \min\{a,b\}^s 
    \end{equation}
\end{lemma}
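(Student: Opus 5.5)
By symmetry of both sides in $a$ and $b$, I will assume without loss of generality that $0 \le b \le a$, so that $\min\{a,b\} = b$. The degenerate cases are immediate: if $b = 0$ both sides vanish, since $a^s - a^s = 0$ and $0^s = 0$. For $b > 0$ I will use homogeneity: every term has degree $s$, so dividing by $b^s$ and setting $x = a/b \ge 1$ reduces \eqref{eqn: ineq_power} to the one-variable inequality
\[
g(x) := x^s + 1 - (x+1)^s \ge 2 - 2^s, \qquad x \ge 1.
\]

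The plan is to show that $g$ is nondecreasing on $[1,\infty)$ and then check the endpoint. Differentiating gives $g'(x) = s\bigl(x^{s-1} - (x+1)^{s-1}\bigr)$. Since $0 < s < 1$, the map $t \mapsto t^{s-1}$ is strictly decreasing on $(0,\infty)$, and $x < x+1$, so $g'(x) > 0$. Hence $g(x) \ge g(1) = 1 + 1 - 2^s = 2 - 2^s$ for every $x \ge 1$. Multiplying back by $b^s$ yields
\[
a^s + b^s - (a+b)^s \ge (2-2^s)\, b^s = (2-2^s)\min\{a,b\}^s,
\]
which is the claim.

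There is no real obstacle; the only points needing care are the normalization choice and the case split. Dividing by the \emph{smaller} variable is essential: it places the ratio $x$ in $[1,\infty)$, where monotonicity gives the minimum exactly at $x=1$, matching the constant $2-2^s$. The degenerate case $b=0$ must be handled separately, because the division by $b^s$ is not available there. Equality holds precisely when $a=b$ or $\min\{a,b\}=0$. The inequality is sharp in the constant $2-2^s$, which is what the later convolution estimates, in which Gevrey decay is preserved under convolution, will presumably exploit.
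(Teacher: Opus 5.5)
Your proof is correct and is essentially the paper's argument: assume $b\le a$, dispose of $b=0$, divide by $b^s$ to reduce to $g(x)=x^s+1-(1+x)^s$ on $[1,\infty)$, and use $g'(x)=s\bigl(x^{s-1}-(x+1)^{s-1}\bigr)>0$ to get $g(x)\ge g(1)=2-2^s$. Your extra remark that equality holds exactly when $a=b$ or $\min\{a,b\}=0$ is also correct, since it follows from the strict monotonicity of $g$.
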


\begin{proof}
    Assume without loss of generality that $a \ge b$. 
    If $b=0$, the inequality is trivial. 
    For $b > 0$, we divide both sides of \eqref{eqn: ineq_power} by $b^s$ to obtain:
    \[ \left(\frac{a}{b}\right)^s + 1 - \left(1 + \frac{a}{b}\right)^s \ge 2 - 2^s \]
    Let $x = a/b \in [1, \infty)$ and define the function $f(x) = x^s + 1 - (1+x)^s$. 
    Its derivative is
    \[ f'(x)  = s \left[ \frac{1}{x^{1-s}} - \frac{1}{(1+x)^{1-s}} \right]. \]
    Since $1-s > 0$ and $x < 1+x$, it follows that $x^{1-s} < (1+x)^{1-s}$, which implies $f'(x) > 0$. 
    Therefore, $f$ is monotonically increasing on $[1, \infty)$, and its minimum occurs at $x=1$, where $f(1) = 2 - 2^s$. This completes the proof.
\end{proof}

Next, we use \Cref{lemma: App-1} to show that the Gevrey decay property is preserved under
convolution.

\begin{lemma}
\label{lemma: App-2}
Let the Gevrey decay set $\mathcal{K}(s)$ be defined in~\eqref{eqn: gevrey-l2}. For any two Fourier vectors $\widehat{a}, \widehat{b}  \in  \mathcal{K}(s)$, there exists a constant $C_2(n, s) > 0$ such that their convolution satisfies 
\begin{equation}    
\label{eqn: convolution}   
\sup_{\pmb{k} \in \mathbb{Z}^n} \left( |(\widehat{a}*\widehat{b})(\pmb{k})| \exp\left\{| \pmb{k} |^{s} \right\} \right) \leq C_2(n,s)
\end{equation}  
 More generally, if $\widehat{a}_j(\pmb{k}) \in \mathcal{K}(s)$ for $j=1, \ldots,m$, then there exists a constant $C_m(n, s) > 0$ such that the $m$-fold convolution satisfies:
 \begin{equation}    
\label{eqn: m-convolution}   
\sup_{\pmb{k} \in \mathbb{Z}^n} \left( \big| \big(\widehat{a}_1 * \widehat{a}_2 * \cdots * \widehat{a}_m \big)(\pmb{k}) \big| \exp\left\{| \pmb{k} |^{s} \right\} \right) \leq C_m(n,s)
\end{equation}   

\end{lemma}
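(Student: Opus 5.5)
We prove the two-fold estimate~\eqref{eqn: convolution} directly and then obtain~\eqref{eqn: m-convolution} by induction on $m$. Fix $\pmb{k} \in \mathbb{Z}^n$ and write
\[
(\widehat{a}*\widehat{b})(\pmb{k}) = \sum_{\pmb{k}' \in \mathbb{Z}^n} \widehat{a}(\pmb{k}-\pmb{k}')\,\widehat{b}(\pmb{k}').
\]
Membership in $\mathcal{K}(s)$ gives $|\widehat{a}(\pmb{k}-\pmb{k}')| \le \exp\{-|\pmb{k}-\pmb{k}'|^s\}$ and $|\widehat{b}(\pmb{k}')| \le \exp\{-|\pmb{k}'|^s\}$. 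Multiplying by $\exp\{|\pmb{k}|^s\}$, each summand is bounded by
\[
\exp\big\{ |\pmb{k}|^s - |\pmb{k}-\pmb{k}'|^s - |\pmb{k}'|^s \big\}.
\]
By the triangle inequality for the $\ell^1$-length, $|\pmb{k}| \le |\pmb{k}-\pmb{k}'| + |\pmb{k}'|$. Since $t\mapsto t^s$ is increasing, we then apply \Cref{lemma: App-1} with $a = |\pmb{k}-\pmb{k}'|$ and $b = |\pmb{k}'|$. This yields
\[
|\pmb{k}|^s - |\pmb{k}-\pmb{k}'|^s - |\pmb{k}'|^s \le -(2-2^s)\min\{|\pmb{k}-\pmb{k}'|, |\pmb{k}'|\}^s .
\]

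It therefore remains to bound, uniformly in $\pmb{k}$, the lattice sum
\[
\sum_{\pmb{k}'} \exp\big\{-(2-2^s)\min\{|\pmb{k}-\pmb{k}'|,|\pmb{k}'|\}^s\big\}.
\]
We split $\mathbb{Z}^n$ according to which of $|\pmb{k}'|$ and $|\pmb{k}-\pmb{k}'|$ is smaller. On each piece we drop the constraint and shift variables, so the sum is at most
\[
2\sum_{\pmb{j}\in\mathbb{Z}^n} \exp\{-c_s |\pmb{j}|^s\}, \qquad c_s := 2-2^s > 0 .
\]
This series converges because the number of $\pmb{j}$ with $|\pmb{j}| = \ell$ is $O(\ell^{n-1})$, and $\sum_\ell \ell^{n-1} e^{-c_s \ell^s} < \infty$ for every $s>0$. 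Calling this finite quantity $C_2(n,s)$ gives~\eqref{eqn: convolution}. We should note that the constant blows up like $s^{-n/s}$ as $s\to 0$, which is consistent with the paper's dependence of constants on $s$.

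For the $m$-fold statement, the natural route is induction using $\widehat{a}_1*\cdots*\widehat{a}_m = (\widehat{a}_1*\cdots*\widehat{a}_{m-1})*\widehat{a}_m$. The only subtlety, and the main obstacle, is that the $(m-1)$-fold convolution satisfies the Gevrey bound with constant $C_{m-1}$ rather than $1$, so it is not literally in $\mathcal{K}(s)$. This is harmless: the two-fold argument above is bilinear, so if $\sup_{\pmb{k}} |\widehat{a}(\pmb{k})|e^{|\pmb{k}|^s}\le A$ and $\sup_{\pmb{k}} |\widehat{b}(\pmb{k})|e^{|\pmb{k}|^s}\le B$, then the convolution bound is $AB\,C_2(n,s)$. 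The induction therefore gives $C_m = C_2^{\,m-1}$.

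If vector-valued entries or the $\partial_{\pmb{\omega}}$ part of the composite norm must also be controlled, we apply the same argument componentwise. For the derivative, the Leibniz rule $\partial(\widehat a*\widehat b)=\partial\widehat a*\widehat b+\widehat a*\partial\widehat b$ reduces it to two convolutions of the same type, at the cost of a factor of $2$ per fold.
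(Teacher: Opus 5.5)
Your proof is correct and follows essentially the same route as the paper. Both arguments bound each summand by $\exp\{-|\pmb{k}-\pmb{k}'|^s-|\pmb{k}'|^s\}$, then use the $\ell^1$ triangle inequality with \Cref{lemma: App-1} to pull out $\exp\{-|\pmb{k}|^s\}$ times the uniformly summable factor $\exp\{-(2-2^s)\min\{|\pmb{k}-\pmb{k}'|,|\pmb{k}'|\}^s\}$, and finally induct on $m$. You also supply two details the paper only asserts: the uniform-in-$\pmb{k}$ bound on the leftover lattice sum, obtained by splitting into two regions, and the observation that the $(m-1)$-fold convolution lies only in a dilate of $\mathcal{K}(s)$, which bilinearity handles with $C_m = C_2^{m-1}$.
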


\begin{proof}
Given that the Fourier vectors $\widehat{a}, \widehat{b}  \in  \mathcal{K}(s)$, we calculate their convolution as
 \begin{align*}
     |(\widehat{a}*\widehat{b})(\pmb{k})| & \le \sum_{\pmb{k}' \in \mathbb{Z}^n} |\widehat{a}(\pmb{k}-\pmb{k}')| |\widehat{b}(\pmb{k}')|                            \nonumber          \\
                              & \leq \sum_{\pmb{k}' \in \mathbb{Z}^n} \exp\left\{-|\pmb{k}-\pmb{k}'|^{s}  \right\} \exp\left\{-|\pmb{k}'|^{s} \right\}                       \nonumber          \\
                              & \le \sum_{\pmb{k}' \in \mathbb{Z}^n} \exp\left\{-| \pmb{k} |^{s} \right\} \exp\left\{- \left( |\pmb{k}-\pmb{k}'|^{s}  + | \pmb{k}' |^{s}  - \left( |\pmb{k}-\pmb{k}'| + | \pmb{k}' | \right)^{s}  \right) \right\},                                                                                                                                                                   
 \end{align*}
where the last step uses the triangle inequality. Applying~\Cref{lemma: App-1} to bound the sum involving the remaining exponential terms, we have
\[
|(\widehat{a}*\widehat{b})(\pmb{k})|  \leq  \exp\left\{-| \pmb{k} |^{s} \right\} \left( \sum_{\pmb{k}' \in \mathbb{Z}^n} \exp\left\{ - (2 - 2^{s}) \min\left\{ | \pmb{k} - \pmb{k}'|^{s}, \; | \pmb{k}' |^{s} \right\} \right\}  \right).     
\]
Since the sum of the Gevrey decay series converges in an $n$-dimensional lattice $\mathbb{Z}^n$, we establish the inequality~\eqref{eqn: convolution}.  By induction, this same argument extends to any finite $m$-fold convolutional products,  thereby establishing the inequalities~\eqref{eqn: m-convolution}.
\end{proof}

\subsection{Proof of~\Cref{prop: vector-field}}
\label{subsec: vector-field}

Since the perturbation Hamiltonian $H_1$ is a real-coefficient polynomial, each components of its associated vector field, $\partial H/ \partial \overline{\pmb{z}}_{j}$ for $j=1,\ldots, n$, is likewise a real-coefficient polynomial. Given that $\widehat{\pmb{z}} \in \mathcal{K}(s)$,~\Cref{lemma: App-2} implies that the coefficient $\widehat{\pmb{X}}(\pmb{k})$ exhibit Gevrey decay. By applying the Leibniz rule, it follows that the derivatives $\partial_{\pmb{\omega}}\widehat{\pmb{X}}(\pmb{k})$ also exhibits Gevrey decay, which leads directly to the estimate in~\eqref{eqn: vector-field-Gevrey}. By summing these decaying terms (leveraging the convergence of the Gevrey series) or applying the supremum bound across the domain, it follows that the total vector field is bounded. This establishes the uniform bound in~\eqref{eqn: vector-field-bound}.

\subsection{Proof of~\Cref{prop: convolution}}
\label{subsec: convolution}

Following a similar argument as used for the vector field, we apply~\Cref{lemma: App-2} to establish Gevrey decay for the entries of the Hessian operator and their derivative with respect to the frequency $\pmb{\omega}$. Since the two operators  $\mathcal{H}_1$ and $\mathcal{H}_2$ exhibit Toeplitz-like and Hankel-like structures respectively, we can establish the necessary bound over the full lattice $\mathbb{Z}^n$. This results naturally extends to the non-resonant set in~\eqref{eqn: hessian-gevrey}. To bound the norm for the operator $\mathcal{H}_1$, we consider the norm $\mathcal{H}_{1} \widehat{\pmb{z}}$ for any $ \widehat{\pmb{z}}  \in \ell_2$ as
\begin{align*}
\| \mathcal{H}_{1} \widehat{\pmb{z}} \|^2 &=  \sum_{\pmb{k} \in \mathbb{Z}^n}\bigg\| \sum_{\pmb{k}' \in \mathbb{Z}^n}\mathcal{H}_{1}(\pmb{k'}) \widehat{\pmb{z}}(\pmb{k} - \pmb{k'})  \bigg\|^2 \\
                                                                                                        & \leq \sum_{\pmb{k} \in \mathbb{Z}^n} \left( \sum_{\pmb{k}' \in \mathbb{Z}^n} \left\| \mathcal{H}_1(\pmb{k'})   \right\|^{\frac12} \cdot \left\| \mathcal{H}_1(\pmb{k'})   \right\|^{\frac12}  \|\widehat{\pmb{z}}(\pmb{k}  - \pmb{k'}) \| \right)^2                                                                                               
 \end{align*}
Since the operator $\mathcal{H}_1$ exhibits Gevrey decay, we apply the Cauchy-Schwarz inequality to derive the following bound as
\begin{equation}
\label{eqn: h1-bound}
 \| \mathcal{H}_1 \widehat{\pmb{z}} \|^2 \leq \bigg[ \sum_{\pmb{k} \in \mathbb{Z}^n} \|\mathcal{H}_1(\pmb{k} )\|   \bigg]^2  \bigg[ \sum_{\pmb{k} \in \mathbb{Z}^n} \|\widehat{\pmb{z}}(\pmb{k})\|^2    \bigg] <\infty.
\end{equation}
Similarly, we can establish a corresponding bound for the operator $\mathcal{H}_2$ as
\begin{equation}
\label{eqn: h2-bound}
 \| \mathcal{H}_2 \widehat{\pmb{z}} \|^2 \leq \bigg[ \sum_{\pmb{k} \in \mathbb{Z}^n} \|\mathcal{H}_2(\pmb{k} )\|   \bigg]^2  \bigg[ \sum_{\pmb{k} \in \mathbb{Z}^n} \|\widehat{\pmb{z}}(\pmb{k})\|^2    \bigg] <\infty.
\end{equation}
By combining these estimates, we successfully establish the bound presented in~\eqref{eqn: hessian-bound}.

\subsection{Proof of~\Cref{prop: B-operator}}
\label{subsec: b-operator}

Recalling that the frequency iteration operator $B$ from~\eqref{eqn: rank-1},  we observe that for any $\pmb{k}, \pmb{k}' \in \mathcal{S}$, its entries satisfy the following inequality: 
\[
\| B(\pmb{k}, \pmb{k}') \| \leq \frac{1}{e} \left\| \frac{ \partial \langle \pmb{k}, \widehat{\pmb{X}}_q \rangle  }{ \partial \widehat{\pmb{z}}_p(\pmb{k}') }~\widehat{\pmb{z}}_p (\pmb{k}) \right\|.
\]
For any $\widehat{\pmb{z}} \in \mathcal{K}(s)$, we apply the decay properties established in~\Cref{prop: vector-field} to verify the Gevrey bound in~\eqref{eqn: b-gevrey}. Given that   $B$ is a rank-one operator,  we apply the Cauchy-Schwarz inequality to bound its norm as follows:
\[
\| B(\pmb{k}, \pmb{k}') \| \leq \frac{1}{e} \| \pmb{k} \widehat{\pmb{z}}_p(\pmb{k}) \| \cdot \left\| \frac{ \partial \widehat{\pmb{X}}_q  }{ \partial \widehat{\pmb{z}}_p(\pmb{k}') } \right\| 
\]
By utilizing the Gevrey decay of the Hessian operators derived in~\Cref{subsec: convolution}, we extend the bound over the lattice $\mathbb{Z}^n \times \mathbb{Z}^n$ as
\begin{equation*}
\| B \| \leq \sum_{\pmb{k} \in \mathbb{Z}^n} \left( \frac{\| \pmb{k} \widehat{\pmb{z}}_p(\pmb{k}) \|}{e} \right) \cdot \sum_{\pmb{k}' \in \mathbb{Z}^n} \left( \left\|  \frac{\partial  \widehat{\pmb{X}}_q}{\partial \widehat{\pmb{z}}_p} (\pmb{k}') \right\| \right).
\end{equation*}
Finally, since $\widehat{\pmb{z}} \in \mathcal{K}(s)$, we invoke~\Cref{prop: vector-field} to ensure the convergence of the infinite sum over the lattice, which confirms the uniform bound as stated in~\eqref{eqn: b-bound}.

\subsection{Proof of~\Cref{prop: tensor}}
\label{subsec: tensor}

As the same procedure used for vector fields and tangent linear operators, we analyze the three-order tensor $\partial^2 \widehat{\pmb{X}} / \partial \widehat{\pmb{z}}^{2}$. For any $\pmb{k}, \pmb{k}', \pmb{k}'' \in \mathbb{Z}^n$, we derive the kernels as
\begin{align*}
\frac{\partial^2 \widehat{\pmb{X}}(\pmb{k})}{  \partial \widehat{\pmb{z}}(\pmb{k}') \partial \widehat{\pmb{z}}(\pmb{k}'') }   = &  \mathcal{T}_{11}(\pmb{k} - \pmb{k}' - \pmb{k}'') +  \mathcal{T}_{21}(\pmb{k} + \pmb{k}' - \pmb{k}'') +  \mathcal{T}_{12}(\pmb{k} - \pmb{k}' + \pmb{k}'')  +  \mathcal{T}_{22}(\pmb{k} + \pmb{k}' + \pmb{k}'')  \nonumber\\
= &  \frac{1}{(2\pi)^{n}} \int_{\mathbb{T}^{n}} \frac{\partial^3 H_1}{\partial \pmb{z}^2 \partial \overline{\pmb{z}}} e^{-i \langle \pmb{k} - \pmb{k}' - \pmb{k}'', \pmb{\theta} \rangle } d\pmb{\theta} + \frac{1}{(2\pi)^{n}} \int_{\mathbb{T}^{n}} \frac{\partial^3 H_1}{\partial \pmb{z} \partial \overline{\pmb{z}}^2} e^{-i \langle \pmb{k} + \pmb{k}' - \pmb{k}'', \pmb{\theta} \rangle } d\pmb{\theta}   \nonumber   \\
                                                                                                                                                                                            + & \frac{1}{(2\pi)^{n}} \int_{\mathbb{T}^{n}} \frac{\partial^3 H_1}{\partial \pmb{z} \partial \overline{\pmb{z}}^2} e^{-i \langle \pmb{k} - \pmb{k}' + \pmb{k}'', \pmb{\theta} \rangle } d\pmb{\theta} + \frac{1}{(2\pi)^{n}} \int_{\mathbb{T}^{n}} \frac{\partial^3 H_1}{\partial \pmb{z}^2 \partial \overline{\pmb{z}}} e^{-i \langle \pmb{k} + \pmb{k}' + \pmb{k}'', \pmb{\theta} \rangle } d\pmb{\theta}.  
\end{align*}
Given that $\widehat{\pmb{z}} \in \mathcal{K}(s)$ and the perturbation Hamiltonian $H_1$ is a real-coefficient polynomial,  there exists some constant $\gamma_9 = \gamma_9(H_1, s) > 0$ such that 
\[
\sup_{\pmb{k}, \pmb{k}', \pmb{k}'' \in \mathbb{Z}^n}\bigg(  \| \mathcal{T}_{ij}(\pmb{k} + (-1)^i\pmb{k}' + (-1)^j\pmb{k}'') \| \exp\left\{ |\pmb{k} + (-1)^i\pmb{k}' + (-1)^j\pmb{k}'''|^s \right\} \bigg) \leq \gamma_9,
\] 
which demonstrates that the tensors $\mathcal{T}_{ij}$ for $i, j \in \{ 1, 2 \}$ exhibits Gevrey decay. To establish the boundedness of the tensor, we apply the Cauchy-Schwarz inequality twice, first to decouple the $\mathbf{k}''$ summation and then to decouple the $\mathbf{k}'$ convolution. Taking $\mathcal{T}_{11}$ as a representative example, we derive the following bound:
\begin{align*}
\| \mathcal{T}_{11} (\widehat{\pmb{z}}_1(\pmb{k}'), \widehat{\pmb{z}}_2(\pmb{k}'' ) \|^2 &=  \sum_{\pmb{k} \in \mathbb{Z}^n}\bigg\| \sum_{\pmb{k}'' \in \mathbb{Z}^n} \bigg(\sum_{\pmb{k}' \in \mathbb{Z}^n} \mathcal{T}_{11} (\pmb{k} - \pmb{k}' - \pmb{k}'') \widehat{\pmb{z}}_1(\pmb{k}') \bigg) \widehat{\pmb{z}}_2(\pmb{k}'' ) \bigg\|^2 \\
                                                                                                        & \leq \left[ \sum_{\pmb{k} \in \mathbb{Z}^n} \left\|\sum_{\pmb{k}' \in \mathbb{Z}^n} \mathcal{T}_{11} (\pmb{k} - \pmb{k}') \widehat{\pmb{z}}_1(\pmb{k}') \right \|   \right]^2  \left[ \sum_{\pmb{k} \in \mathbb{Z}^n} \|\widehat{\pmb{z}}_2(\pmb{k})\|^2    \right]        \\
                                                                                                        & \leq   \left[ \sum_{\pmb{k} \in \mathbb{Z}^n} \|\mathcal{T}_{11}(\pmb{k} )\|   \right]^2   \left[ \sum_{\pmb{k} \in \mathbb{Z}^n} \|\widehat{\pmb{z}}_1(\pmb{k})\|^2    \right]  \left[ \sum_{\pmb{k} \in \mathbb{Z}^n} \|\widehat{\pmb{z}}_2(\pmb{k})\|^2    \right]  < \infty                                                              
 \end{align*}
 Similarly, we can establish the corresponding bounds for the tensors $\mathcal{T}_{21}$, $\mathcal{T}_{12}$, and $\mathcal{T}_{22}$. By summing these components, we establish the overall bound~\eqref{eqn: tensor}.

\section{Proofs of results in~\Cref{sec: small}}
\label{sec: small-app}

In this appendix, we provide the detailed proofs for the results presented in~\Cref{sec: small}, specifically addressing~\Cref{lem: mes-dig} and~\Cref{thm: small-size-original}.

\subsection{Proof of~\Cref{lem: mes-dig}}
\label{sec: mes-dig}

Let $\Omega \in \mathbb{R}^n$ be a bounded domain, and define its diameter as $d = \max_{x,y \in \Omega} \| x- y \|_2$. For each fixed vector $\pmb{k} \in \mathbb{Z}^{n} \setminus \{ 0\}$, the nearly-resonance condition $ \left| \langle \pmb{k}, \pmb{\omega} \rangle\right|  <  | \pmb{k} |^{-\tau} $ defines a region in $\Omega$ bounded by two parallel hyperplanes perpendicular to $\pmb{k}$. These hyperplanes enclose a ``resonant strip'' or slab. The distance between these hyperplanes, representing the width of the resonant strip, is at most $2  \| \pmb{k} \|_2^{ - (\tau+1)}$. Therefore,  we can estimate the measure of  the resonant slab within the bounded domain $\Omega$ as:
\begin{equation}
\label{eqn: mes-single}
\mathrm{mes}\left( \left\{ \pmb{\omega} \in \Omega \bigg | \left| \langle \pmb{k}, \pmb{\omega} \rangle\right| < \frac{1}{ | \pmb{k} |^{\tau}} \right\} \right) \leq \frac{2  d^{n-1}}{ \| \pmb{k} \|_2^{\tau + 1}}.
\end{equation}
Since the exponent satisfies $\tau > n - 1$, the lattice series $ \sum_{\pmb{k} \in \mathbb{Z}^{n} \setminus \{ 0 \} } \| \pmb{k} \|_2^{-\tau - 1}$ converges.   Summing the estimates in~\eqref{eqn: mes-single}  yields the desired mesure bound~\eqref{eqn: small-mes}, which completes the proof.

\subsection{Proof of~\Cref{thm: small-size-original}}
\label{sec: small-size-original}

Before proceeding to the proof of~\Cref{thm: small-size-original}, we first establish a lemma regarding the preservation of the Gevrey decay property under operator multiplication. We say that an operator $A : \ell_2(\mathbb{Z}^{n}) \mapsto  \ell_2(\mathbb{Z}^{n})$ has \textit{proper decay} if its entries satisfy:
\[
\sup_{\pmb{k}, \pmb{k}' \in \mathbb{Z}^n} \left( 2|A(\pmb{k}, \pmb{k}')| \left( \exp\left\{ - |\pmb{k} - \pmb{k}'|^s \right\} + \exp\left\{ - |\pmb{k} + \pmb{k}'|^s \right\} \right)^{-1} \right) \leq 1.
\]

\begin{lemma}
\label{lemma: App-3}

Let $A_1$ and $A_2$ be two operators with proper decay. Then there exists a constant $C'_2(n, s) > 0$ such that their product $\mathcal{A} = A_1A_2$ satisfies 
\begin{equation}    
\label{eqn: convolution-mix}   
\sup_{\pmb{k}, \pmb{k}' \in \mathbb{Z}^n} \left( 2|\mathcal{A}(\pmb{k}, \pmb{k}')| \left( \exp\left\{ - |\pmb{k} - \pmb{k}'|^s \right\} + \exp\left\{ - |\pmb{k} + \pmb{k}'|^s \right\} \right)^{-1} \right) \leq C'_2(n,s)
\end{equation}  
 More generally, let $A_j$ for $j=1, \ldots,m$ be operators with proper decay. Then there exists a constant $C'_m(n, s) > 0$ such that the $m$-fold product $\mathcal{A}=\prod_{j=1}^{m}A_j $ satisfies:
 \begin{equation}    
\label{eqn: m-convolution-mix}   
\sup_{\pmb{k},\pmb{k}'  \in \mathbb{Z}^n} \left(  2|\mathcal{A}(\pmb{k}, \pmb{k}')|  \left( \exp\left\{ - |\pmb{k} - \pmb{k}'|^s \right\} + \exp\left\{ - |\pmb{k} + \pmb{k}'|^s \right\} \right)^{-1}  \right)  \leq C'_m(n,s)
\end{equation}   
\end{lemma}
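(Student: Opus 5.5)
We will prove \Cref{lemma: App-3} in the same way as \Cref{lemma: App-2}, with the one-sided convolution estimate replaced by a two-sided one. Write $E(\pmb{k},\pmb{k}') := \exp\{-|\pmb{k}-\pmb{k}'|^s\} + \exp\{-|\pmb{k}+\pmb{k}'|^s\}$. Proper decay means $|A_j(\pmb{k},\pmb{k}')| \le \tfrac12 E(\pmb{k},\pmb{k}')$. For the product $\mathcal{A} = A_1A_2$ we have
\[
|\mathcal{A}(\pmb{k},\pmb{k}')| \le \frac14 \sum_{\pmb{k}''\in\mathbb{Z}^n} E(\pmb{k},\pmb{k}'')\,E(\pmb{k}'',\pmb{k}').
\]
Expanding the product of the two sums gives four terms. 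Each is a sum over $\pmb{k}''$ of $\exp\{-|\pmb{k}\mp\pmb{k}''|^s - |\pmb{k}''\mp'\pmb{k}'|^s\}$ for some choice of signs.

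We treat the four sign patterns separately.

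\textbf{Pattern $(-,-)$.} Set $a = |\pmb{k}-\pmb{k}''|$ and $b = |\pmb{k}''-\pmb{k}'|$. By the triangle inequality, $a+b \ge |\pmb{k}-\pmb{k}'|$. \Cref{lemma: App-1} then gives
\[
a^s + b^s \ge |\pmb{k}-\pmb{k}'|^s + (2-2^s)\min\{a,b\}^s .
\]
Summing over $\pmb{k}''$, the factor $\exp\{-(2-2^s)\min\{a,b\}^s\}$ is summable over the lattice, uniformly in $\pmb{k},\pmb{k}'$. Indeed, we bound it by the sum of $\exp\{-(2-2^s)a^s\}$ and $\exp\{-(2-2^s)b^s\}$, and each of these sums to a constant $C(n,s)$ independent of the centre. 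This term is therefore bounded by $C\exp\{-|\pmb{k}-\pmb{k}'|^s\}$.

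\textbf{Pattern $(+,+)$.} Here $a = |\pmb{k}+\pmb{k}''|$ and $b = |\pmb{k}''+\pmb{k}'|$, and $a+b \ge |\pmb{k}-\pmb{k}'|$. The same argument gives the bound $C\exp\{-|\pmb{k}-\pmb{k}'|^s\}$.

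\textbf{Mixed patterns.} For $(-,+)$ we have $|\pmb{k}-\pmb{k}''| + |\pmb{k}''+\pmb{k}'| \ge |\pmb{k}+\pmb{k}'|$, and symmetrically for $(+,-)$. Both mixed terms are therefore bounded by $C\exp\{-|\pmb{k}+\pmb{k}'|^s\}$.

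Adding the four contributions yields $|\mathcal{A}(\pmb{k},\pmb{k}')| \le \tfrac{C}{2}E(\pmb{k},\pmb{k}')$, with $C$ built from the lattice sum. This proves \eqref{eqn: convolution-mix} with $C_2' = C$.

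For the $m$-fold product we induct on $m$. Since the pairwise argument above only uses the form of the bound, the same computation with $A_1$ satisfying $|A_1| \le \tfrac{C'_{m-1}}{2}E$ gives $C'_m = C'_{m-1}C_2'$. Alternatively, one can absorb the constant by the scaling $A \mapsto A/C$, which gives $C'_m \le (C'_2)^{m-1}$. Either way we obtain \eqref{eqn: m-convolution-mix}.

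The main obstacle is checking that the summation constant is uniform in $\pmb{k},\pmb{k}'$. The reduced summand $\exp\{-c\min\{a,b\}^s\}$ involves two different centres, $\pmb{k}$ (or $-\pmb{k}$) and $\pmb{k}'$ (or $-\pmb{k}'$). Bounding the minimum-type exponential by the sum of the two individual exponentials handles this, since each is a translate of the fixed summable sequence $\exp\{-c|\cdot|^s\}$.

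A second point needing care is the sign bookkeeping in the mixed terms. There we must use, for example, $\pmb{k}+\pmb{k}' = (\pmb{k}-\pmb{k}'') + (\pmb{k}''+\pmb{k}')$ so that the triangle inequality produces the correct index combination. Beyond these two points, the argument is the routine calculation already used in \Cref{lemma: App-2}.
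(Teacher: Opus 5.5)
Your proof is correct and follows the same route as the paper. Both expand $|A_1A_2|$ into the four sign patterns, combine the triangle inequality with \Cref{lemma: App-1} to extract $\exp\{-|\pmb{k}\mp\pmb{k}'|^s\}$, sum the leftover $\exp\{-(2-2^s)\min\{a,b\}^s\}$ over the lattice, and then induct on $m$; your explicit sign bookkeeping, the bound $\exp\{-c\min\{a,b\}^s\}\le \exp\{-ca^s\}+\exp\{-cb^s\}$ (which makes the constant uniform in $\pmb{k},\pmb{k}'$), and your prefactor $\tfrac14$ (the paper writes $\tfrac12$) make precise steps that the paper only sketches.
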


\begin{proof}
Given that $A_1$ and $A_2$ are two operators with proper decay, the entry of their product is bounded by:
 \begin{align*}
     |(\mathcal{A})(\pmb{k}, \pmb{k}')| & \le \sum_{\pmb{k}'' \in \mathbb{Z}^n} |A_1(\pmb{k},\pmb{k}'')| |A_2(\pmb{k}'', \pmb{k}')|                             \nonumber          \\
                              & \leq \frac12\sum_{\pmb{k}'' \in \mathbb{Z}^n}  \left( \exp\left\{ - |\pmb{k} - \pmb{k}''|^s \right\} + \exp\left\{ - |\pmb{k} + \pmb{k}''|^s \right\} \right) \left(  \exp\left\{ - |\pmb{k} ''- \pmb{k}'|^s \right\} + \exp\left\{ - |\pmb{k}'' + \pmb{k}'|^s \right\}     \right)                  \nonumber          \\
                              & \le \frac12  \left( \exp\left\{-| \pmb{k} - \pmb{k}' |^{s} \right\} + \exp\left\{ - |\pmb{k} + \pmb{k}'|^s \right\}  \right) \\
                              & \mathrel{\phantom{\le}} \cdot \sum_{\pmb{k}'' \in \mathbb{Z}^n} \exp\left\{ - (2 - 2^{s}) \min\left\{ | \pmb{k} \pm \pmb{k}''|^{s}, \; | \pmb{k}'' \pm \pmb{k}' |^{s} \right\} \right\}.       
 \end{align*}
Since the sum of the Gevrey decay series converges in an $n$-dimensional lattice $\mathbb{Z}^n$, we establish the inequality~\eqref{eqn: convolution-mix}.  By induction, this same argument extends to any finite $m$-fold products,  thereby establishing the inequality~\eqref{eqn: m-convolution-mix}.
\end{proof}

We now complete the proof of~\Cref{thm: small-size-original}.  Since the diagonal operator $D_{N}$ admits the uniform lower bound~\eqref{eqn: uniform-lower-original-small}, it is invertible. This allows us to factor the restricted operator $(T + \varepsilon B)_{N}$ as follows:
\[
(T + \varepsilon B)_{N}= D_{N} + \varepsilon (B_{N} + S_{N}) = D_{N} \left[ I + \varepsilon D_{N}^{-1}(B_N+S_{N}) \right].
\]
Given the smallness of the perturbation parameter $\varepsilon$, the operator $(T + \varepsilon B)_{N}$ is invertible, and its inverse can then be expanded as a Neumann series: 
\begin{equation}
\label{eqn: T-nu-expansion}
(T+\varepsilon B)_{N}^{-1} = \left[ I + \varepsilon D_{N}^{-1}(B_N + S_{N}) \right]^{-1} D_{N}^{-1} = \sum_{\ell=0}^{\infty} \left[ - \varepsilon D_{N}^{-1}(B_N+S_{N})  \right]^{\ell} D_{N}^{-1}.
\end{equation}
By applying the bound for $\| D_{N}^{-1} \|_2$ from~\eqref{eqn: uniform-lower-original-small} alongside the estimates from~\Cref{prop: convolution} and~\Cref{prop: B-operator}, we derive the following estimate as
\[
\| T_{N}^{-1} \|_2 \leq \| D_{N}^{-1} \|_2  \left( \sum_{\ell =0}^{\infty} \| \varepsilon D_{N}^{-1}(S_{N} + B_{N})\|_2^{\ell} \right) \leq \frac{1}{\varepsilon_{N}},
\] 
which establishes the norm bound~\eqref{eqn: small-size-original-inverse}. To derive the off-diagonal decay, we rewrite the Neumann series~\eqref{eqn: T-nu-expansion} to isolate the higher-order terms:
\[
(T+\varepsilon B)_{N}^{-1} = D_{N}^{-1} -  \left[ \sum_{\ell=0}^{\infty} \left( - \varepsilon D_{N}^{-1}(S_{N} +B_N) \right)^{\ell} \right] \left(\varepsilon D_{N}^{-1}(S_{N}+B_{N}) D_{N}^{-1}\right).
\]
Utilizing the Gevrey decay properties for the entries of for operators $S$ and $B$ (\Cref{prop: convolution} and~\Cref{prop: B-operator}),~\Cref{lemma: App-3} implies that for any $\pmb{k} \neq \pm \pmb{k}'$:
\[
\left| \left( - \varepsilon D_{N}^{-1} (S_{N}+B_{N}) \right)^{\ell} \left(  \pmb{k}, \pmb{k}' \right) \right| \leq  \left(\frac{1}{2}\right)^{\ell+1} \left( \exp\left\{- | \pmb{k} - \pmb{k}' |^{s} \right\} +\exp\left\{- | \pmb{k} +\pmb{k}' |^{s} \right\}\right),
\]
By summing the corresponding Neumann series, we establish the inequality for the off-diognal entries~\eqref{eqn: small-size-off-diagnol}. The proof of~\Cref{thm: small-size-original} is now complete.

\subsection{Proof of~\Cref{prop: small-eig-one}}
\label{sec: small-eig-one}

We proceed by contradiction. Suppose that the diagonal operator $D^{\sigma_0 }_{N}$ admits two distinct diagonal entries whose absolute values are smaller than $(4n)^{-\tau}(N+1)^{-\tau}$.  Specifically, for any two distinct lattice points $\pmb{k}_1, \pmb{k}_2 \in \pmb{k}_0 + \Lambda_N$, assume
\begin{align*}
         & |\sigma_{0} + \langle \pmb{k}_1, \pmb{\omega}' \rangle -  \omega_{j_1} | <  \frac{1}{(4n)^{\tau}(N+1)^{\tau}}, \\
         & |\sigma_{0} + \langle \pmb{k}_2, \pmb{\omega}' \rangle -  \omega_{j_2} | <   \frac{1}{(4n)^{\tau}(N+1)^{\tau}},  
\end{align*} 
for some indices $j_1, j_2, = 1, \ldots, n$. According to~\Cref{prop: frequency-drift}, we apply the triangle inequality to the difference to obtain:
\[
|\langle \pmb{k}_1 - \pmb{k}_2, \pmb{\omega} \rangle - \omega_{j_1} +  \omega_{j_2} | < \frac{1}{(4n)^{\tau}(N+1)^{\tau}}  + 2 N \sqrt{\varepsilon}
\]
On the other hand, since the drifted frequency vector satisfies the non-resonance condition~\eqref{eqn: nearly-resonance}, we have the following lower bound:
\[
|\langle \pmb{k}_1 - \pmb{k}_2, \pmb{\omega} \rangle -  \omega_{j_1} +  \omega_{j_2} | \geq \frac{1}{| \pmb{k}_1 - \pmb{k}_2  - \pmb{e}_{j_1} + \pmb{e}_{j_2}|^{\tau}} \geq \frac{1}{(2n)^{\tau}(N+1)^{\tau}}
\]
By choosing the threshold $\varepsilon _0 > 0$ to be sufficiently small, the upper bound derived from our assumption directly contradicts this non-resonance lower bound. This contradiction ensures that at most one such index $\pmb{k}$ exists, thereby completing the proof of~\Cref{prop: small-eig-one}.

\section{Proofs of results in~\Cref{sec: completion}}
\label{sec: completion-app}

In this appendix, we provide the detailed proofs for the results presented in~\Cref{sec: completion}. Specifically, we complete the verification of~\Cref{cond: implementation} and provide the proof for~\Cref{lem: inverse-derivative}. 

\subsection{Technical estimates for the resolvent expansion}
\label{subsec: implement-left}

We proceed with the verification by applying the resolvent identity to two cases defined in \eqref{eqn: 10k-box} and \eqref{eqn: k-box}.  Our objective is to establish a general bound for every entry of $ T_N^{-1} $.

We first consider $ \pmb{k}_1 \in \Lambda_{5K} $ (and consequently, $ -\pmb{k}_1\in \Lambda_{5K} $), as described in \eqref{eqn: 10k-box}. 
The first term $ |T_{\Lambda_{10K}}^{-1}(\pmb{k}_1, \pmb{k}_2)| $ is controlled by $ \exp\left\{ (\log 10K)^{15} \right\} $.
To estimate the second term of in the RHS of \eqref{eqn: 10k-box}, we note that since $ \pmb{k}_4 \notin \Lambda_{10K} $, we have $ |\pmb{k}_1 - \pmb{k}_4| > 5K > (10K)^\frac{1}{2} $. By the inductive hypotheses of \Cref{cond: implementation}, we estimate $ |T_{\Lambda_{10K}}^{-1}(\pmb{k}_1, \pmb{k}_3)| $ by using \eqref{eqn: off-diagonal} if $ |\pmb{k}_1 \pm \pmb{k}_3| > (10K)^\frac{1}{2} $ and
by \eqref{eqn: inverse-grow} otherwise. Combining these with the off-diagonal decay of $ T(\pmb{k}_3, \pmb{k}_4) $, we arrive at the following estimate for \eqref{eqn: 10k-box} as
\begin{equation}
    \label{eqn: 10k-est-general}
    \begin{aligned}
            |T_{N}^{-1}(\pmb{k}_1, \pmb{k}_2)| 
            &\leq \exp\left\{ (\log 10K)^{15} \right\} \\
            &+ \frac{\varepsilon}{2} \sum_{\pmb{k}_4 \in \Lambda_{N} \setminus \Lambda_{10K}} \left( \exp\left\{ -\frac{|\pmb{k}_1 - \pmb{k}_4|^s}{2} \right\} + \exp\left\{ -\frac{|\pmb{k}_1 + \pmb{k}_4|^s}{2} \right\} \right) |T_{ N }^{-1}(\pmb{k}_4, \pmb{k}_2)|.
    \end{aligned}
\end{equation}

\noindent We now consider \eqref{eqn: k-box}, where $ \pmb{k}_1 \notin \Lambda_{5K} $ and the box $ \Gamma_\alpha $ is centered at $ \pmb{k}_1 $. Here, the first term $ |T_{\Gamma_\alpha}^{-1}(\pmb{k}_1, \pmb{k}_2)| $ is controlled by $ \exp\left\{ (\log K)^{15} \right\} $. Similarly, we distinguish two cases $ |\pmb{k}_1 \pm \pmb{k}_3| > K^\frac{1}{2} $ and $ |\pmb{k}_1 \pm \pmb{k}_3| \leq K^\frac{1}{2} $, estimating $ |T_{\Gamma_\alpha}^{-1}(\pmb{k}_1, \pmb{k}_3)| $ via~\Cref{thm: induction-multi-scale} to obtain

\begin{equation}
    \label{eqn: k-est-general}
    \begin{aligned}
            |T_{N}^{-1}(\pmb{k}_1, \pmb{k}_2)| 
            &\leq \exp\left\{ (\log K)^{15} \right\} \\
            &+ \frac{\varepsilon}{2} \sum_{\pmb{k}_4 \in \Lambda_{N} \setminus \Gamma_\alpha} \exp\left\{ -\frac{|\pmb{k}_1 - \pmb{k}_4|^s}{2} \right\} |T_{ N }^{-1}(\pmb{k}_4, \pmb{k}_2)|.
    \end{aligned}
\end{equation}

\noindent By taking the maximum of $ \pmb{k}_4 \in \Lambda_N $ for the term $ |T_{N}^{-1}(\pmb{k}_4, \pmb{k}_2)| $ in
both \eqref{eqn: 10k-est-general} and \eqref{eqn: k-est-general}, we obtain:

\begin{equation*}
    \begin{aligned}
        |T_{N}^{-1}(\pmb{k}_1, \pmb{k}_2)| 
        &\leq \exp\left\{ (\log 10K)^{15} \right\}  \\
        &\qquad + \left( \max_{\pmb{k}_4} |T_{N}^{-1}(\pmb{k}_4, \pmb{k}_2)| \right) \cdot \frac{\varepsilon}{2} \sum_{\pmb{k}_4 \in \Lambda_{N}} \left( \exp\left\{ -\frac{|\pmb{k}_1 - \pmb{k}_4|^s}{2} \right\} + \exp\left\{ -\frac{|\pmb{k}_1 + \pmb{k}_4|^s}{2} \right\} \right)  \\
        &\leq \exp\left\{ (\log 10K)^{15} \right\} + \frac{1}{2} \max_{\pmb{k}_4} |T_{N}^{-1}(\pmb{k}_4, \pmb{k}_2)|.
    \end{aligned}
\end{equation*}

\noindent This implies the following uniform bound:

\begin{equation}
    \label{eqn: est-general}
    |T_{N}^{-1}(\pmb{k}_1, \pmb{k}_2)| \leq 2\exp\left\{ (\log 10K)^{15} \right\}.
\end{equation}

To verify the off-diagonal decay \eqref{eqn: off-diagonal}, we assume
$ |\pmb{k}_1 \pm \pmb{k}_2| > N^\frac{1}{2} = K^5 $.
Since those boxes $ \Lambda_{10K} $ and $ \Gamma_\alpha $ are of size at most $ 10K < K^5 $, the first term in the RHS of both \eqref{eqn: 10k-box} and \eqref{eqn: k-box} vanish. 
Following the same argument above, we obtain

\begin{equation}
    \label{eqn: 10k-est}
    |T_{N}^{-1}(\pmb{k}_1, \pmb{k}_2)| \leq \frac{\varepsilon}{2} \sum_{\pmb{k}_4 \in \Lambda_{N} \setminus \Lambda_{10K}} \left( \exp\left\{ -\frac{|\pmb{k}_1 - \pmb{k}_4|^s}{2} \right\} + \exp\left\{ -\frac{|\pmb{k}_1 + \pmb{k}_4|^s}{2} \right\} \right) |T_{ N }^{-1}(\pmb{k}_4, \pmb{k}_2)|.
\end{equation}

\noindent and 

\begin{equation}
    \label{eqn: k-est}
    |T_{N}^{-1}(\pmb{k}_1, \pmb{k}_2)| \leq \frac{\varepsilon}{2} \sum_{\pmb{k}_4 \in \Lambda_{N} \setminus \Gamma_\alpha} \exp\left\{ -\frac{|\pmb{k}_1 - \pmb{k}_4|^s}{2} \right\} |T_{ N }^{-1}(\pmb{k}_4, \pmb{k}_2)|.
\end{equation}

\noindent Repeated application of the iterations \eqref{eqn: 10k-est} and \eqref{eqn: k-est} yields
the localization bound:

\begin{equation}
    \label{eqn: N-off-diagonal}
    | (T + \varepsilon B)_{N}^{-1}\left( \pmb{k}_1, \pmb{k}_2 \right) | \leq \exp\left\{ - \frac{|\pmb{k}_1 - \pmb{k}_2|^{s}}{2}\right\} +  \exp\left\{ - \frac{|\pmb{k}_1 + \pmb{k}_2|^{s}}{2}\right\}, \quad \text{for}~|\pmb{k}_1 \pm \pmb{k}_2| \geq N^{\frac12},
\end{equation}

\noindent which confirms~\eqref{eqn: off-diagonal}.

Finally, using \eqref{eqn: est-general} and \eqref{eqn: N-off-diagonal}, we calculate the $ l_1 $-
and $ l_\infty $-norm of $ T_N^{-1} $ to bound the $ l_2 $-norm of $ T_N^{-1} $ as 

\begin{equation}
    \label{eqn: N-norm}
    \| T_N^{-1} \|_2 \leq 4\exp\left\{ (\log 10K)^{15} \right\} \leq \exp\left\{ (\log N)^{15} \right\},
\end{equation}
which matches~\eqref{eqn: inverse-grow}, thereby completing the verification of~\Cref{cond: implementation}.

 \subsection{Proof of~\Cref{lem: inverse-derivative}}
 \label{subsec: derivative-app}

From~\Cref{prop: convolution} and~\Cref{prop: B-operator}, it follows that for any $\pmb{k} \neq \pmb{k}'$, the off-diagonal entries of $\partial T_N$ satisfy
\[
|\partial (T + \varepsilon B)_N(\pmb{k}, \pmb{k}')| \leq \left( \exp\left\{ - | \pmb{k} - \pmb{k}' |^{s} \right\} + \exp\left\{ - | \pmb{k} + \pmb{k}' |^{s} \right\}\right).
\]
Following the operator identity in~\eqref{eqn: t-n-derivative-inverse}, we expand $\partial (T + \varepsilon B)_{N}^{-1}(\pmb{k}, \pmb{k}')$ as a double sum as:
\[
\partial (T + \varepsilon B)_{N}^{-1}(\pmb{k}, \pmb{k}') = - \sum_{\pmb{k}_1 \in  \Lambda_N} \sum_{\pmb{k}_2 \in  \Lambda_N}(T + \varepsilon B)_{N}^{-1}(\pmb{k}, \pmb{k}_1) (\partial (T + \varepsilon B)_{N} )(\pmb{k}_1, \pmb{k}_2) (T + \varepsilon B)_{N}^{-1}(\pmb{k}_2, \pmb{k}')
\]
By applying the triangle inequality, we obtain: 
\begin{align*}
&|\partial (T + \varepsilon B)_{N}^{-1}(\pmb{k}, \pmb{k}')| \\  
&\leq \sum_{\pmb{k}_1 \in  \Lambda_N} \sum_{\pmb{k}_2 \in  \Lambda_N } |(T + \varepsilon B)_{N}^{-1}(\pmb{k}, \pmb{k}_1)| \cdot |\partial (T + \varepsilon B)_{N} (\pmb{k}_1, \pmb{k}_2)| \cdot |(T + \varepsilon B)_{N}^{-1}(\pmb{k}_2, \pmb{k}')| \\
&\leq \frac{4N^2 (2N+1)^{2n}}{\varepsilon_N^2}  \left(\exp\left\{ - \frac{\left( |\pmb{k} - \pmb{k}'| - 2N^{\frac12} \right)^{s}}{2} \right\} + \exp\left\{ - \frac{\left( |\pmb{k} + \pmb{k}'| - 2N^{\frac12} \right)^{s}}{2} \right\} \right),
\end{align*}
where the last inequality follows~\Cref{lemma: App-3}.  Given that the spatial separation satisfies $|\pmb{k} \pm \pmb{k}'| \geq N^{\frac34}$, the off-diagonal entries of $\partial T_{N}^{-1}$ satisfies~\eqref{eqn: inverse-restricted-entries}. The proof thus is complete.

\end{document}